\date{} 
\newfont{\fra}{eufm10 scaled 1095}  
\newfont{\Bbg}{msbm10 scaled 1280} 
\newcommand\RR{{\mathbb{R}}} 
\newcommand\NN{{\mathbb{ N}}} 
\newcommand\ZZ{{\mathbb{ Z}}}
\newcommand\fg{{\mathfrak{g}}} 
\newcommand\fh{{\mathfrak{h}}} 
\newcommand\fm{{\mathfrak{m}}} 
\newcommand\fn{{\mathfrak{n}}} 
\newcommand\fa{{\mathfrak{a}}} 
\newcommand\fd{{\mathfrak{d}}} 
\newcommand\fp{{\mathfrak{p}}} 
\newcommand\fr{{\mathfrak{r}}}
\newcommand\fs{{\mathfrak s}} 
\newcommand\fz{{\mathfrak{z}}} 
\newcommand\III{{\rm III}} 
\newcommand\eps{\varepsilon} 
\newcommand{\fgl}{\mathop{{\mathfrak{g} \mathfrak {l}}}} 
\newcommand{\fsl}{\mathop{{\mathfrak{sl}}}}
\newcommand\fco{\mathfrak c \mathfrak o} 
\newcommand{\fso}{\mathop{{\mathfrak s \mathfrak o}}}
\newcommand{\fhI}{\fh^{\rm I}}
\newcommand{\fhIII}{\fh^{\rm III}}
\newcommand\g{{{\mathfrak{g}}^*_2}} 
\newcommand{\GL}{\mathop{{\rm GL}}} 
\newcommand{\SO}{\mathop{{\rm SO}}} 
\newcommand{\grO}{{\rm O}}
\newcommand{\G}{{\rm G}_2^*}
\newcommand{\ad}{\mathop{{\rm ad}}} 
\newcommand{\tr}{\mathop{{\rm tr}}} 
\newcommand{\Ad}{\mathop{{\rm Ad}}}
\newcommand{\diag}{\mathop{{\rm diag}}} 
\newcommand{\Span}{\mathop{{\rm span}}} 
\newcommand{\pro}{{\rm pr}} 
\newcommand\ip{{\langle\cdot \,,\cdot \rangle}} 
\newcommand\lb{{[\cdot\,,\cdot]}}
\newcommand{\benur}{\begin{enumerate}[label=(\roman*)]}
\newtheorem{theo}{Theorem}[section] 
\newtheorem{pr}[theo]{Proposition} 
\newtheorem{de}[theo]{Definition} 
\newtheorem{re}[theo]{Remark} 
\newtheorem{co}[theo]{Corollary} 
\newtheorem{lm}[theo]{Lemma} 
\newcommand{\MSC}[1]{\par\noindent\textbf{Mathematics Subject Classification (2020):} #1}
\begin{document} 
\title{Left-invariant $\G$-structures of type III}
\author{Viviana del Barco, Ana Cristina Ferreira, Ines Kath}


\maketitle 
\begin{abstract}
\noindent 
We investigate left-invariant $\G$-structures on 7-dimensional Lie groups, focusing on those whose holonomy algebras are indecomposable and of type III, the latter meaning that the socle of the holonomy representation is maximal. Building on the classification of indecomposable holonomy algebras contained in $\g$, we determine which ones arise as infinitesimal holonomy algebras of type III for left-invariant $\G$-structures. Our main result shows that only abelian subalgebras occur, and these are necessarily of dimension two or three. Moreover, we provide explicit Lie groups with left-invariant $\G$-structures realizing these abelian holonomies. 
\end{abstract}

\MSC{Primary 53C29; Secondary 53C50, 53C10, 53C30.}

\section{Introduction}

This article will deal with holonomy groups contained in the non-compact group of type ${\rm G}_2$, which is denoted by $\G$. Such holonomy groups appear on pseudo-Riemannian manifolds of signature $(4,3)$. In particular, the full group $\G$ is a holonomy group, as it can indeed be realized as a holonomy of compact nilmanifolds \cite{FL}. Besides $\G$ itself, also proper subgroups occur as holonomy groups. We will be mainly interested in these subgroups, where we want to consider only connected ones which, moreover, are `genuine' in the sense that the holonomy representation is indecomposable. Instead of studying the holonomy group itself, we consider its Lie algebra, which we call holonomy algebra.  For a Lie subalgebra of $\fso(p,q)$, a necessary condition for it to be a holonomy is that it is a Berger algebra, i.e. that it satisfies Berger's first criterion. In \cite{FK}, the indecomposable  Berger algebras properly contained in the Lie algebra $\g$ of $\G$ were classified. They were distinguished by the dimension of the socle of their natural representation on the tangent space, this dimension is 1, 2 or~3  (see Section~\ref{sec:holt3} for more information). Accordingly, we have three families of  indecomposable Berger algebras, namely those of type I, II or III. For each type, \cite{FK} provides a complete list of all  Berger algebras of that type. All these Berger algebras can indeed be realized as holonomy algebras of local metrics \cite{FKsigma,VolkII,VolkIII}. For some of these holonomy algebras a global metric has also been found. 
Here we mean by `global' that there is a `nice' manifold with such a metric and not just a coordinate patch.   In \cite{Ksymm}, indecomposable indefinite symmetric spaces with $\G$-structure are classified. Their holonomy  algebras are abelian and of dimension two or three. 
In \cite{FL}, Fino and Lujan studied torsion-free  $\G$-structures on nilpotent Lie groups. In addition to the already mentioned example whose holonomy is equal to $\g$, they found an example of a torsion-free  $\G$-structure with $6$-dimensional indecomposable holonomy.  In
\cite{FK}, new left-invariant metrics with indecomposable  holonomy in $\g$ were found, where, for type III, only the 3-dimensional abelian Berger algebra was realized. 
In addition to these examples, it is known that torsion-free left-invariant $\G$-structures whose associated metric is bi-invariant on Lie groups with a cocompact lattice have a trivial holonomy \cite{G}. 

However, the question of which holonomy algebras contained in $\g$ are realizable by a left-invariant metric has not been studied systematically. 
Here we will attack this question for holonomy algebras of type III, i.e. for those whose socle is maximal. We solve it completely under the additional assumption that not only the metric but also the parallel $\G$-structure is left-invariant. More precisely, we determine which of the holonomy algebras of type III listed in \cite{FK} are the Lie algebra of the holonomy group of a Lie group endowed with a left-invariant  torsion-free $\G$-structure and its associated metric. 

We can state our main result as follows.

\begin{theo} \label{th:main}
    Let $\fh$ be a subalgebra of $\g$ whose natural representation on $\RR^{4,3}$ is indecomposable and of type \III. Then $\fh$ is the infinitesimal holonomy of a left-invariant $\G$-structure if and only if $\fh$ is abelian. If $\fh$ is abelian, then it has dimension two or three. 
\end{theo}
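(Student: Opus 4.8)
My plan is to translate the problem into the language of metric Lie algebras and their Nomizu maps, and then to combine the classification of \cite{FK} with an analysis of which curvature tensors can actually arise from a left-invariant structure. After fixing the underlying $\G$-structure $(\ip,\varphi)$ on $\fg\cong\RR^{4,3}$, a left-invariant torsion-free $\G$-structure is the same datum as a linear map $\alpha\colon\fg\to\g\subseteq\fso(4,3)$: the bracket is recovered by $[X,Y]=\alpha_XY-\alpha_YX$, the condition $\alpha_X\in\fso(4,3)$ encodes metricity, $\alpha_X\in\g$ encodes $\nabla\varphi=0$, and the only remaining requirement is that this bracket satisfy the Jacobi identity. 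The curvature is then $R(X,Y)=[\alpha_X,\alpha_Y]-\alpha_{[X,Y]}$, and since the metric is homogeneous the infinitesimal holonomy $\fh$ is the Lie subalgebra of $\g$ generated by the operators $R(X,Y)$ together with their iterated $\alpha$-derivatives. I would first record these facts and fix, from \cite{FK}, the list of indecomposable type-\III{} Berger algebras, observing that for type \III{} the socle $S$ is a maximal isotropic $3$-plane. Since every such $\fh$ is nilpotent and preserves the flag $S\subset S^\perp\subset\RR^{4,3}$, it acts trivially on $S$ (so $S$ is exactly the fixed space), the induced action on the line $S^\perp/S$ vanishes, and hence $\alpha$ and $\fh$ map $\RR^{4,3}\to S^\perp\to S\to 0$; in particular $[\fh,\fh]$ consists of operators that kill $S^\perp$ and send $\RR^{4,3}/S^\perp$ into $S$, i.e. $[\fh,\fh]\subseteq\Hom(\RR^{4,3}/S^\perp,S)$.

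For the ``if'' direction I would dispose of the abelian algebras by exhibiting explicit Nomizu maps. The three-dimensional abelian case is already realized in \cite{FK}; for the two-dimensional abelian algebra I would write down a solvable metric Lie algebra on $\RR^{4,3}$, i.e. choose $\alpha$ adapted to the flag, verify the Jacobi identity directly, and compute the operators $R(X,Y)$ to confirm that the generated holonomy is exactly the prescribed $\fh$ and that the representation is indecomposable of type \III{}. This simultaneously yields the dimension bound, since the abelian members of the \cite{FK} list have dimension two or three.

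The heart of the argument is the ``only if'' direction: I must show that no non-abelian type-\III{} Berger algebra is the infinitesimal holonomy of any left-invariant $\G$-structure. The strategy exploits the tension between two requirements. On one hand, for $\fh$ to be the \emph{full} holonomy, the operators $R(X,Y)$ and their $\alpha$-derivatives must generate all of $\fh$, and in particular must reach the commutator ideal $[\fh,\fh]$, which by the socle analysis lives in the deepest layer $\Hom(\RR^{4,3}/S^\perp,S)$ of the flag. On the other hand, $R$ must lie in the space $\mathcal{K}(\fh)$ of $\fh$-valued algebraic curvature tensors \emph{and} satisfy the extra constraints imposed by the Jacobi identity for $[X,Y]=\alpha_XY-\alpha_YX$. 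I would compute, family by family along the \cite{FK} list, the component of $R$ in this deepest layer, and show that the first Bianchi identity together with the Jacobi identity force that component, and hence the whole commutator part of the holonomy, to vanish. Consequently the subalgebra generated by the curvature is abelian, so $\fh$ itself is abelian, contradicting the assumption and leaving only the abelian algebras of dimension two and three.

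The step I expect to be the main obstacle is precisely this last computation: controlling not merely the curvature operators $R(X,Y)$ but the entire infinitesimal holonomy, i.e. all iterated derivatives $[\alpha_{Z_1},[\dots,[\alpha_{Z_k},R(X,Y)]\dots]]$, and proving that none of them can populate $[\fh,\fh]$ once the Jacobi identity is imposed. I anticipate that the cleanest route is to show that the flag-preserving form of $\alpha$ forces the $\alpha$-derivatives to preserve the abelian part, so that the obstruction is genuinely algebraic and uniform across the finitely many non-abelian families, rather than requiring a separate ad hoc argument for each.
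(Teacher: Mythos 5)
Your general setup (encoding the left-invariant structure by a Nomizu map $\alpha\colon\fg\to\g$ with the Jacobi identity as the only constraint, holonomy generated by the curvature operators and their iterated $\alpha$-derivatives) matches the paper, and your plan for the ``if'' direction and the dimension statement is viable in outline. But the heart of your ``only if'' argument rests on a false structural claim. You assert that every indecomposable type {\rm III} Berger algebra $\fh\subset\g$ is nilpotent, acts trivially on its socle $S$, and consequently satisfies $[\fh,\fh]\subseteq\Hom(\RR^{4,3}/S^\perp,S)$. This is wrong: the socle is the maximal \emph{semisimple} subrepresentation, not the fixed-point space, and the list of type {\rm III} candidates from \cite{FK} consists of $\fa\ltimes\fm(1,0,k)$ with $\fa\in \{0,\,\fsl(2,\RR),\,\fgl(2,\RR),\,\fco(2),\,\fd,\,\RR\cdot\diag(1,0)\}$. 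For $\fa\neq 0$ these algebras are not nilpotent and act non-trivially (though semisimply) on the socle $\Span\{e_1,e_2,e_3\}$; concretely, $h(A,0,0)$ acts there by $\diag(\tr A,A)$. Hence $[\fh,\fh]$ contains $[\fa,\fa]$ (all of $\fsl(2,\RR)$ when $\fa\supseteq\fsl(2,\RR)$) as well as $[\fa,\fm(1,0,k)]$, none of which lies in your ``deepest layer.'' So the proposed uniform strategy --- kill the deepest-layer component of the curvature and conclude that the generated algebra is abelian --- cannot even get started for precisely the non-abelian cases you must exclude; those are exactly the cases where $\fh$ fails to have the flag-nilpotent structure you postulate.

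A second, related gap: the constraints you invoke ($R\in\mathcal K(\fh)$ plus first Bianchi, which is already built into the definition of $\mathcal K$) are far from sufficient, because $\Lambda(\fg)$ need not lie in $\fh$ at all --- it only lies in the normalizer of $\fh$ in $\g$ (Proposition~\ref{pro:educg}), so ``the flag-preserving form of $\alpha$'' is much weaker than you assume. The paper's elimination of $\fa\neq 0$ requires the interplay of three inputs: the normalizer constraint $\Lambda(\fg)\subset\fhIII$ (or slightly larger), the Jacobi identity for $[X,Y]=\Lambda(X)Y-\Lambda(Y)X$, and the \emph{second} Bianchi identity, using that $\nabla_x R$ again lies in $\mathcal K(\fhIII)$; this occupies Sections 5--6 and is a genuinely case-by-case analysis, with no flag-theoretic shortcut of the kind you describe. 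Finally, a caveat on your ``if'' direction: the realization in \cite{FK} of the $3$-dimensional abelian algebra is by a left-invariant \emph{metric}; the theorem requires the parallel $\G$-structure (the $3$-form) itself to be left-invariant, which is not automatic and is why the paper constructs its own family $G_\eps$ realizing both $\fm(1,0,1)$ and $\fm(1,0,2)$.
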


Our proof of the ``if'' direction is constructive. It consists of giving a one-parameter family of Lie groups such that all of them carry a parallel left-invariant $\G$-structure whose holonomy is 3-dimensional abelian for non-zero values of the parameter, and 2-dimensional abelian otherwise. It is remarkable that their left-invariant metrics can also be induced by non left-invariant parallel $\G$-structures, as we show in Section~\ref{S7}.

To make a better description of the main result of the paper, let us recall some properties of $\G$ and introduce some notation. Recall that the complex Lie group ${\rm G}_2^{\mathbb C}$ contains two conjugacy classes of 9-dimensional maximal para\-bolic subgroups. These correspond to two conjugacy classes $P_1$, $P_2$ of real subgroups in $\G$. These conjugacy classes can be characterized in the following way. The group $\G$ acts transitively on the set of isotropic lines in $\RR^{4,3}$. Groups in $P_1$ appear as the stabilizer of an isotropic line under this action. Similarly, $\G$ acts transitively on the set of isotropic 2-planes with vanishing cross product and groups in $P_2$ can be understood as the stabilizer of such a 2-plane (see \cite{FK} for more explanations).
Let us denote by $\fp_1$ and $\fp_2$ the conjugacy classes of Lie algebras corresponding to $P_1$ and $P_2$. 
Lie algebras in $\fp_1$ are isomorphic to $\fgl(2,\RR)\ltimes \fm$, where $\fm$ is a 3-step nilpotent Lie algebra, while Lie algebras belonging to $\fp_2$ are isomorphic to $\fgl(2,\RR)\ltimes {\fn}$ for a 5-dimensional Heisenberg algebra $\fn$. 
Holonomy algebras of type III are subalgebras of Lie algebras in $\fp_1$. More exactly, they are contained in a subalgebra isomorphic to $\fgl(2,\RR)\ltimes[\fm,\fm]$. In Section~\ref{sec:holt3} we will give an explicit embedding of $\g$ into $\mathfrak{so}(4,3)$ realizing $\G$ inside $\mathrm{SO}(4,3)$. Then a representative of $P_1$ is given by the stabilizer of the real line spanned by the first basis vector. Its Lie algebra is equal to the Lie algebra $\fhI\cong\fgl(2,\RR)\ltimes\fm$ defined in \eqref{eq:h1} and the subalgebra $\fgl(2,\RR)\ltimes[\fm,\fm]$ is equal to $\fhIII$ defined in \eqref{eq:h3}. In~\eqref{eq:m}, we define two abelian subalgebras $\fm(1,0,1)\subset\fm(1,0,2)=[\fm,\fm]$ of $\fhIII$.
The abelian subalgebras appearing in Theorem \ref{th:main} are conjugate to these two subalgebras.

The paper is organized as follows. In Section 2 we fix the standard embedding of $\G$ into $\SO(4,3)$ and of its Lie algebra that will be used all along the paper. We introduce notation and recall the list of possible indecomposable holonomy algebras of type III obtained in \cite{FK}; all of them are of the form $\fh=\fa\ltimes \fm(1,0,k) $ with $\fa \subset \fgl(2,\mathbb R)$ and $\fm(1,0,k)$ abelian (see Theorem \ref{TFK}). 

Section 3 specializes in left-invariant $\G$-structures on Lie groups. We introduce geometric tensors for later use and show a restriction for the Levi-Civita connection on left-invariant vector fields, when the holonomy of the pseudo-Riemannian manifold is assumed to be contained $\fhIII$.

Section 4 summarizes technical results that will be used in the sequel; these are general results for pseudo-Riemannian Lie groups endowed with a left-invariant metric of signature $(4,3)$, whose Levi-Civita connection restricts to a particular subalgebra of $\fg_2^*$.

Section 5 shows that if $\fh$ is the holonomy algebra of a Lie group endowed with a left-invariant $\G$-structure, then  its $\fa$  part cannot be a non-trivial subalgebra of the Lie algebra of diagonal matrices. This is a first step towards the proof of Theorem \ref{th:main}. Indeed, Section 6 contains the second and final step of the ``only if'' part, that shows that actually $\fa$ has to be zero dimensional and thus $\fh$ is abelian.  

Finally, Section 7 is devoted to  constructing explicit examples of Lie groups with left-invariant $\G$-structures whose holonomies are the remaining two abelian Lie algebras that are mentioned in our main theorem above. 
The detailed proof of Theorem \ref{th:main} is also part of this section.

\noindent \textbf{Acknowledgments.} We would like to thank the Banff International Research Station (Banff, Canada) for its support during the Women in Geometry 3 Workshop (23w5062), where the work on this paper was first developed.

The research of ACF was supported by Portuguese Funds through FCT (Fundação para a Ciência e a Tecnologia, I.P.) within the Projects UIDB/00013/2020, UIDP/00013/ 2020 and UID/00013: Centro de Matemática da Universidade do Minho (CMAT/UM).
   
VdB is partially supported by FAPESP grant 2023/15089-9.  VdB and ACF would like to thank  the Department of Mathematics of Philipps Universität Marburg and VdB the Centro de Matemática da Universidade do Minho for their hospitality during the visits on which this work was advanced.

IK's stay at the Banff International Research Station was supported by the Programme: Kongressreisen 23, funded by  Deutscher Akademischer Austauschdienst (DAAD).

\section{Holonomy algebras of type {\rm III}}\label{sec:holt3}

The action of $\GL(7,\RR)$ on $\bigwedge^3\RR^7$ has two open orbits. Elements of these orbits are called generic. Their stabilizers are isomorphic to ${\rm G}_2$ for one orbit and to $\G$ for the other one. A generic three-form $\omega$ induces an orientation and a non-degenerate symmetric inner product $\ip$ on $\RR^7$. If the stabilizer of $\omega$ equals $\G$, then this inner product has signature $(4,3)$. In particular, $\G$ is contained in $\SO(\RR^7,\ip)\cong\SO(4,3)$. 

We want to fix a standard embedding of $\G$ into $\SO(4,3)$ and describe its Lie algebra. Let $e_1,\dots, e_7$ be the standard basis of $\RR^7$. The 3-form 
\begin{equation}\label{Eomega}
\omega_0=\sqrt 2(e^{167}+e^{235})-e^4\wedge (e^{15}-e^{26}-e^{37})\,
\end{equation}
is generic and will serve as the standard 3-form here. It induces the metric 
\begin{eqnarray}\label{Eip}
\ip&=&2(e^1\cdot e^5+e^2\cdot e^6+e^3\cdot e^7)- (e^4)^2.
\end{eqnarray}
The stabilizer of $\omega_0$ in $\GL(7,\RR)$ is the standard embedding of $\G$ that we will use in this article. Its Lie algebra $\g\subset\fso(4,3)$ consists of all elements of the form 
\begin{equation}\label{eq:g2}
\begin{pmatrix}
    s_1+s_4 & -s_{10}  & s_9 &  \sqrt{2} s_6 & 0 & -s_{11} & -s_{12} \\
    -s_8 & s_1 & s_2 & \sqrt{2} s_9 & s_{11} & 0 & s_6 \\
    s_7 & s_3 & s_4 & \sqrt{2} s_{10} & s_{12} & -s_6 &  0 \\
\sqrt{2} s_5 & \sqrt{2} s_7 & \sqrt{2} s_8 & 0 & \sqrt{2} s_6 & \sqrt{2} s_9 & \sqrt{2} s_{10}\\
0 & s_{13} & s_{14} & \sqrt{2} s_5 & -s_1-s_4 & s_8 & -s_7\\
-s_{13} & 0 & -s_{5} & \sqrt{2} s_7 & s_{10} & -s_1 & -s_3\\
-s_{14} & s_5 & 0 & \sqrt{2} s_8 & -s_9 & -s_2 & -s_4  
\end{pmatrix}
\end{equation}
for $s_1, \dots, s_{14} \in \RR$.

Let us introduce some general notions before we study subalgebras of $\g$. Let $\fh$ be a subalgebra of $\fso(p,q)$ on the pseudo-Euclidean space $\RR^{p,q}$. Its natural representation on $\RR^{p,q}$ is called {\it indecomposable} if it does not leave invariant any proper non-zero subspace which is non-degenerate with respect to the pseudo-Euclidean inner product. The {\it socle} of a representation is its maximal semisimple subrepresentation. Suppose
that the natural representation of $\fh\subset\fso(p,q)$ is indecomposable but not irreducible. Then its socle is totally isotropic. In this case the dimension of the socle will be called the {\it type} of $\fh$.

Now let us assume that $\fh$ is a subalgebra of $\g$ whose natural representation of $\RR^{4,3}$ is indecomposable but not irreducible. Since the socle is totally isotropic, its dimension is 1, 2, or 3. Accordingly, the type of $\fh$ is denoted by I, II or III.

The following subalgebra $\fhI\subset\fg_2^*$ is an example of a subalgebra of type I: 
\begin{equation}\label{eq:h1} \fhI:=\{ h(A,v,u,y)\mid A\in\fgl(2,\RR),\ v\in\RR,\ u, y \in\RR^2\},\
\end{equation}
where
\begin{equation}
    \label{eq:havuy}
h(A,v, u,y):= \left(
\begin{array}{ccccccc}
\tr (A) &-u_2&u_1&\sqrt 2 v&0&-y_1&-y_2\\
0&a_1&a_2&\sqrt 2 u_1&y_1&0&v\\
0&a_3&a_4&\sqrt 2 u_2&y_2&-v&0\\
0&0&0&0&\sqrt 2 v&\sqrt 2 u_1&\sqrt 2 u_2\\
0&0&0&0&-\tr( A)&0&0\\
0&0&0&0&u_2&-a_1&-a_3\\
0&0&0&0&-u_1&-a_2&-a_4
	\end{array}\right),
\end{equation}
for	$ A=\left(\begin{array}{cc} a_1&a_2\\a_3&a_4\end{array}\right)$, $y=(y_1,y_2)^\top$, $u=(u_1,u_2)^\top$.  In the following $y$ and $u$ will always be understood as vertical vectors, but we will drop the $\top$ sign. The Lie algebra $\fhI$ is isomorphic to $\fgl(2,\RR)\ltimes_\rho \fm$, where $\fm$ is equal to the 3-step nilpotent Lie algebra
\[\fm=\{ h(0,v,u,y)\mid v\in\RR,\ u,y \in\RR^2\} \]
and the representation of $\fgl(2,\RR)$ on $\fm$ is given by
\begin{equation}\label{eq:rhoh1}
\rho(A)(v,u,y)=(\tr (A)v, Au, (A+\tr(A)I_2)y).
\end{equation}
For a subalgebra $\fh\subset\fhI$, we define 
\begin{equation}\label{eq:fa}\fa:=\fa(\fh):=\{A\in\fgl(2,\RR)\mid h(A,v,u,y)\in\fh \mbox{ for some }v\in\RR,\,u,y\in\RR^2\}\end{equation}
and identify $\fa$ with the subalgebra $\{h(A,0,0,0)\mid A\in\fa\}$ of $\fhI$. 

We thus have defined the projections $\pro_A:\fh\to \fa$, $\pro_v:\fh\to \RR$ and $\pro_u,\pro_y:\fh\to \RR^2$, such that for $h=h(A',v',u',y')\in \fhI$, $A'=\pro_A (h)$, $v'=\pro_v (h)$, $u'=\pro_u(h)$ and $y'=\pro_y (h)$.

The Lie algebra $\fhI$ is a representative of the  conjugacy class $\fp_1$ of maximal parabolic subalgebras mentioned in the introduction. Since $\G$ acts transitively on the set of isotropic lines, any subalgebra $\fh\subset\g$ of type I is conjugate to a subalgebra of $\fhI$. 

We further define the following subalgebra of type III:
\begin{equation}\label{eq:h3} \fhIII:=\{ h(A,v,y):=h(A,v,0,y)\mid A\in\fgl(2,\RR),\ v\in\RR,\  y \in\RR^2\}\subset \fhI.
\end{equation} 
We have that $\rho$ preserves the abelian subalgebra $[\fm,\fm]=\{ h(0,v,y)\mid v\in\RR,\ y \in\RR^2\}$ and 
\begin{equation}\label{eq:sdp}
\fhIII=\fgl(2,\RR)\ltimes_\rho [\fm,\fm].
\end{equation} 
If $\fh\subset\g$ is of type III, then 
it is conjugate by an element $g\in \G$ to a subalgebra of $\fhIII$. Indeed, $\fh$ leaves invariant the isotropic three-dimensional socle. But then, by \cite[Lemma 2.2]{FK}, it leaves invariant also an isotropic line contained in the socle and therefore also a complementary two-dimensional subspace of the socle. For more detailed information see~\cite[Lemma 2.10]{FK}.

The converse is not always true. In fact, a subalgebra $\fh\subset\fhIII$ admitting an indecomposable representation on $\RR^{4,3}$ is of type III only if its  subrepresentation on $\Span\{e_2,e_3\}$ is semisimple.

For later use, we adopt the following notation from \cite{FK}. For $k=1,2$, let $\fm(1,0,k)\subset\fhIII$ be the abelian subalgebras 
\begin{equation}\label{eq:m}
\fm(1,0,1)=\{h(0,v,(y_1,0))\mid v,y_1\in\mathbb R\},\ \
\fm(1,0,2)=\{h(0,v,y)\mid v\in \mathbb R,\,y\in\mathbb R^2\}, 
\end{equation}
which are invariant under $\rho$.

\begin{lm}\label{lm:abelian}
    Let $\fh\subset\g$ be a subalgebra whose natural representation on $\RR^{4,3}$ is indecomposable and of type {\rm III}. If $\fh$ is abelian, then $\fh$ is conjugate to $\fm(1,0,1)$ or $\fm(1,0,2)$.
\end{lm}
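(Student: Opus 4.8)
My plan is to reduce to $\fhIII$, to show that the $\fgl(2,\RR)$-part of $\fh$ must be trivial, and then to classify the abelian subalgebras of $[\fm,\fm]$ that remain. First I would invoke the reduction recalled before the lemma: since $\fh$ is of type \III, it is conjugate into $\fhIII=\fgl(2,\RR)\ltimes_\rho[\fm,\fm]$ (see \eqref{eq:sdp}), so I may assume $\fh\subseteq\fhIII$ and set $\fa=\pro_A(\fh)\subseteq\fgl(2,\RR)$. As the image of an abelian algebra, $\fa$ is abelian, and the type \III\ hypothesis is exactly that the subrepresentation on $V:=\Span\{e_2,e_3\}$ — which factors through the defining action of $\fa$ — is semisimple. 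Thus every $A\in\fa$ acts semisimply on $V$, and any nonzero $A\in\fa$ has a nonzero eigenvalue.

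The key step is to prove $\fa=0$, and here I would argue by Jordan decomposition. Suppose $\fa\neq0$ and pick $X=h(A,v,y)\in\fh$ with $A\neq0$. Every element of $\fhIII$ preserves the flag $0\subset W\subset W^\perp\subset\RR^{4,3}$ with $W=\Span\{e_1,e_2,e_3\}$, so $X$ is block-triangular and its characteristic polynomial is that of the associated graded; its roots are $0$, $\pm\tr A$ and $\pm\lambda_1,\pm\lambda_2$, where $\lambda_1,\lambda_2$ are the eigenvalues of $A$. Since $A$ is nonzero and semisimple, $X$ has a nonzero eigenvalue. Let $X=X_s+X_n$ be the Jordan decomposition in $\fso(4,3)$. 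Because $X_s$ is a polynomial in $X$ without constant term, it commutes with every endomorphism commuting with $X$; as $\fh$ is abelian, $X_s$ commutes with all of $\fh$, so $\Ker X_s$ and $\im X_s$ are $\fh$-invariant. Both are nonzero, since $0$ is an eigenvalue of $X_s$ (its spectrum is symmetric about $0$ and $\dim\RR^{4,3}=7$ is odd) while $X_s\neq0$. Finally $X_s\in\fso(4,3)$ forces eigenspaces whose eigenvalues have nonzero sum to be orthogonal, so $\RR^{4,3}=\Ker X_s\perp\im X_s$ is an orthogonal sum of two nonzero nondegenerate $\fh$-invariant subspaces, contradicting indecomposability. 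Hence $\fa=0$.

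With $\fa=0$ I have $\fh\subseteq[\fm,\fm]=\{h(0,v,y)\}$, where each element $N=h(0,v,y)$ is nilpotent with $N^3=0$, and the socle equals $W$. If $\pro_v(\fh)=0$, every element fixes the non-isotropic vector $e_4$, so the socle is not totally isotropic, which is impossible for an indecomposable, non-irreducible representation; hence $\pro_v(\fh)=\RR$. A one-dimensional $\fh$ is a single nilpotent with $N^3=0$, which cannot act orthogonally indecomposably on the $7$-dimensional space; hence $\dim\fh\in\{2,3\}$. When $\dim\fh=3$, then $\fh=[\fm,\fm]=\fm(1,0,2)$.

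When $\dim\fh=2$, the line $\fh\cap\{v=0\}$ equals $\RR\,h(0,0,\eta)$ with $\eta\neq0$. The Levi factor acts on $[\fm,\fm]$ by $(v,y)\mapsto(\det g\,v,\det g\,g y)$, transitively on nonzero $\eta$, so after applying it and subtracting a multiple of $h(0,0,(1,0))$ I reach $\fh=\Span\{h(0,0,(1,0)),\,h(0,1,(0,\beta))\}$ for some $\beta\in\RR$. Conjugating by $\exp(s\,h(0,0,(0,1),0))\in\G$ (in the notation of \eqref{eq:havuy}) then fixes the first generator and sends $\beta\mapsto\beta+3s$, using the bracket $[h(0,0,(0,1),0),h(0,1,0,0)]=3\,h(0,0,0,(0,1))$ together with the fact that $h(0,0,(0,1),0)$ commutes with $h(0,0,0,(1,0))$ and $h(0,0,0,(0,1))$; choosing $s=-\beta/3$ yields $\fh=\fm(1,0,1)$. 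I anticipate this final normalization to be the main obstacle, since it is where an a-priori free parameter $\beta$ must be exhibited as a mere conjugation artifact, whereas $\fa=0$ is forced cleanly by the Jordan argument. Altogether $\fh$ is conjugate to $\fm(1,0,1)$ or $\fm(1,0,2)$.
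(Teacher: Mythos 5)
Your proof is correct, and its key step takes a genuinely different route from the paper's. To show $\fa=0$, the paper argues in two computational stages: it first excludes elements with $\tr(A_0)\neq 0$ by conjugating with $\exp\big(h(0,v_0/\tr(A_0),0)\big)$ and noting that the resulting algebra contains $h_0=h(A_0,0,y_0)$ and (by indecomposability) some $h_1=h(A,1,y)$, with $\pro_v([h_0,h_1])=\tr(A_0)\neq0$ contradicting commutativity; it then excludes $\dim\fa=1$ by conjugating with $\exp\big(h(0,0,A_0^{-1}y_0)\big)$ and showing $\pro_y(\fh')=0$, so that $\Span\{e_1,e_4,e_5\}$ is a non-degenerate invariant subspace. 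Your Jordan-decomposition argument replaces both stages at once: since type III forces every $A\in\fa$ to be semisimple, any $X\in\fh$ with $\pro_A(X)\neq0$ has $X_s\neq0$, and $X_s$, being a constant-free polynomial in $X$, commutes with the abelian $\fh$, so $\RR^{4,3}=\Ker X_s\perp\im X_s$ splits the representation into nonzero non-degenerate invariant pieces. This is cleaner and more conceptual (it really proves that on an odd-dimensional pseudo-Euclidean space every element of an indecomposable abelian subalgebra of $\fso(p,q)$ is nilpotent), at the cost of two standard facts the paper's computation never needs: that the Jordan parts of a skew-symmetric operator are again skew-symmetric, and that in a semisimple representation of an \emph{abelian} algebra each individual element acts semisimply --- state the latter explicitly, since you use it for all of $\fa$ while the paper only needs it for a one-dimensional $\fa$, where it is immediate. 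Your endgame coincides with the paper's up to the order of the normalizations: the paper first kills $y_0$ by conjugating with $\exp(h(0,0,-y_0/3,0))$, using exactly the bracket $[h(0,0,u,0),h(0,1,0,0)]=3\,h(0,0,0,u)$ that you compute (your coefficient $3$ is right), and then applies a transformation $\Phi_T$, whereas you do the $\GL(2,\RR)$ step first; both work. The one place where you are terse --- the exclusion of $\dim\fh=1$ --- is equally terse in the paper ("by indecomposability"); your appeal to orthogonal indecomposability of nilpotents is valid (an orthogonally indecomposable skew nilpotent on a $7$-dimensional space must be a single size-$7$ Jordan block, since even blocks pair up, and $N^3=0$ forbids this), but a self-contained alternative is to kill $y_0$ with your own $u$-conjugation and then exhibit $\Span\{e_1,e_4,e_5\}$ as a non-degenerate invariant subspace.
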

\begin{proof}
    Since $\fh$ is of type III, we may assume that it is contained in $\fhIII$. Assume, $\fh$ would contain an element $h(A_0,v_0,y_0)$ such that $\tr(A_0)\not=0$. If we conjugate $\fh$ by $\exp\big(h(0,v_0/\tr(A_0),0)\big)$, we obtain a subalgebra $\fh'$ $\subset \fhIII$, which contains $h_0:=h(A_0,0,y_0)$. Since the natural representation of $\fh'$ is also indecomposable, $\fh'$  contains an element of the form $h_1:=h(A,1,y)$. But $\pro_v([h_0,h_1])=\tr(A_0)$, which is impossible since $\fh$ is abelian. Consequently, $\fa=\fa(\fh')$ is an abelian subalgebra of $\fsl(2,\RR)$. In particular, $\dim\fa\le 1$. Assume that $\dim\fa=1$. Then there exists an element $h(A_0,v_0,y_0)$, $A_0\not=0$. Since $\fh$ is of type III, the adjoint action of $\fa$ is semisimple. Since $\fa=\Span\{A_0\}$, we see that $A_0$ is a semisimple matrix. In particular, $A_0$ is invertible because of $\tr(A_0)=0$. Conjugating $\fh$ by $\exp\big(h(0,0,A_0^{-1}y_0)\big)$ we obtain a subalgebra $\fh'\subset\fhIII$, which contains $h_0:=h(A_0,v_0,0)$. For each element $h_1=h(0,v,y)\in\fh'$, we have  $0=\pro_y([h_0,h_1])=A_0(y)$. Since $A_0$ is invertible, this implies $\pro_y(\fh')=0$. Hence the non-degenerate subspace $\Span\{e_1,e_4,e_5\}$ is invariant under $\fh'$, which contradicts the indecomposability of the natural representation of $\fh'$. We obtain $\fa=0$. If $\fh'=\fm(1,0,2)$, we are done. If not, then $\fh'=\Span\{h(0,1,y_0),h(0,0,y)\}$, $y\not=0$, by indecomposability. Conjugating first by $\exp(h)$ for $h=h(0,0,-y_0/3,0)\in\fhI$ and after that by a suitable element $\exp\big(h(A,0,0)\big)$, we obtain $\fm(1,0,1)$.
\end{proof}

Let $M$ be a 7-dimensional manifold. A $\G$-structure on $M$ is a reduction $P_{\G}(M)$ of the frame bundle $P(M)$ to $\G$. Equivalently, a $\G$-structure is a 3-form $\omega$ on $M$ which is generic at each point of $M$ and whose stabilizer is isomorphic to $\G$. The correspondence is given by 
\[P_{\G}(M)=\{ (e_1,\dots,e_7)\in P(M)\mid \omega = \omega_0 \mbox{ w.r.t. } e_1,\dots,e_7\}. \]
The bundle $P_{\G}(M)$ defines a pseudo-Riemannian metric $g$ of signature $(4,3)$ on $M$. This metric is given by \eqref{Eip} with respect to every frame $(e_1,\dots,e_7)\in P_{\G}(M)$. The $\G$-structure is called parallel if the Levi-Civita connection $\nabla^g$ of $g$ reduces to $P_{\G}(M)$ or, equivalently, if $\nabla^g\omega=0$. 

We now assume that we are given a parallel $\G$-structure $P_{\G}(M)$ on $M$ together with the reduced Levi-Civita connection. We fix a point $x\in M$. Let $H_x$ denote the holonomy group of  $P_{\G}(M)$ at $x$. Let $\mathfrak{e}=(e_1,\dots,e_7)$ be a frame at $x$ which belongs to $P_{\G}(M)$.  Using this frame, we can identify $H_x$ with a subgroup 
of $\G$. The conjugacy class of this subgroup 
in $\G$ depends neither on $x$ nor on the chosen basis $\mathfrak{e}$. So each element of this congugacy class is called a holonomy group of the given $\G$-structure and its Lie algebra is called a holonomy algebra.

The natural representation of a holonomy algebra $\fh\subset \g$ 
 on $\RR^{4,3}$ is called holonomy representation. A holonomy algebra is called indecomposable if its holonomy representation is indecomposable.

\begin{de}[Type of the holonomy] An indecomposable holonomy $\fh\subset \fg_2^*$ is of type {\rm I}, {\rm II}, or {\rm III}, if the socle of the holonomy representation has dimension $1$, $2$ or $3$,  respectively.
\end{de}
Of course, the type of a holonomy algebra only depends on its conjugacy class.

In this paper we are interested in holonomy algebras $\fh$ of type III.  Let $\fh$ be such a holonomy algebra. As we have seen above, there exists an element $g\in\G$ such that $\Ad(g)(\fh)\subset \fhIII$. So we may assume that $\fh$ is contained in $\fhIII$.  

An important tool in the study of holonomy groups is Berger's first criterion, which we want to recall here. For a subalgebra $\fh\subset \fso(p,q)$,  
\[
{\mathcal K}(\fh)
=\{
R\in{\textstyle\bigwedge^2}(\RR^{p,q})^*\otimes \fh\mid\forall x,y,z\in \RR^{p,q}: R(x,y)z+R(y,z)x+R(z,x)y=0
\}
\]
is the space of formal curvature tensors with values in $\fh$. If $\fh$ is a holonomy algebra, then 
\[\fh=\Span\{R(x,y)\mid x,y\in\RR^{p,q},\ R\in{\cal K}(\fh)\}.\]

For a holonomy algebra $\fh\subset\fhIII$, we obtain in particular $$\fh\subset\Span\{R(x,y)\mid x,y\in\RR^{4,3},\ R\in{\cal K}(\fhIII)\}.$$
So, let us describe ${\cal K}(\fhIII)$.
For an algebraic curvature tensor $R\in {\cal K}(\fhIII)$ we put 
$R_{ij}:=R(e_i,e_j)$, where as above $e_1,\dots,e_7$ is the standard basis of $\RR^{4,3}$, and denote $A_{ij}:=\pro_A(R_{ij})$.

The following result is an immediate consequence of \cite[Table 1]{FK}, see also \cite[Table 1]{VolkIII}.

\begin{pr}\label{Pholtab1} The space ${\mathcal K}(\fhIII)$ of algebraic curvature tensors can be parametrized by $a_1,a_2,a_3\in\RR$, $b_k,j_k\in\RR$ ($k=1,\dots,4$) and $c_3,c_4,w_1,w_2,t\in\RR$, where $R=h(A,v,y)\in  {\mathcal K}(\fhIII)$ is given by the data in Table 1 in Appendix~\ref{Acurv}.
\end{pr}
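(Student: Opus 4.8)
The plan is to compute $\mathcal K(\fhIII)$ directly from its definition and read off a parametrization. An element $R\in\bigwedge^2(\RR^{4,3})^*\otimes\fhIII$ amounts to an assignment $e_i\wedge e_j\mapsto R_{ij}=h(A_{ij},v_{ij},y_{ij})\in\fhIII$ for $1\le i<j\le 7$, extended skew-symmetrically; this is $7\cdot\binom{7}{2}=147$ a priori free entries, since $\dim\fhIII=7$. Because $\fhIII\subset\fso(4,3)$, each such $R$ is automatically skew with respect to $\ip$ in its last two slots, so the only remaining requirement for $R$ to lie in ${\mathcal K}(\fhIII)$ is the first Bianchi identity. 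Evaluating the cyclic sum on basis triples turns this into the finite linear system
\[
R_{ij}e_k+R_{jk}e_i+R_{ki}e_j=0,\qquad 1\le i<j<k\le 7,
\]
and the explicit matrix form of $h(A,v,y)$ in \eqref{eq:havuy} (with $u=0$) supplies the action of each $R_{ij}$ on the basis, so the left-hand side expands component by component.

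To keep this manageable I would exploit the $\fgl(2,\RR)$-module structure coming from \eqref{eq:sdp}. Modulo its nilpotent part, $\fhIII$ acts on $\RR^{4,3}$ with the decomposition
\[
\RR^{4,3}=\RR e_1\oplus\Span\{e_2,e_3\}\oplus\RR e_4\oplus\RR e_5\oplus\Span\{e_6,e_7\},
\]
where $\Span\{e_2,e_3\}$ carries the standard representation of $A\in\fgl(2,\RR)$, $\Span\{e_6,e_7\}$ its dual, and $e_1,e_4,e_5$ are one-dimensional of $\fgl(2,\RR)$-weight $\tr A$, $0$, $-\tr A$. The invariant flag $\Span\{e_1\}\subset\Span\{e_1,e_2,e_3\}\subset\Span\{e_1,\dots,e_4\}\subset\cdots$, whose second term is the socle, forces many components of the $R_{ij}$ either to vanish or to be linked, while sorting the Bianchi equations by $\fgl(2,\RR)$-weight splits the $147$-dimensional ambient space into small independent blocks. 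Solving these blocks cuts the solution space down to the $16$ free parameters $a_1,a_2,a_3$, $b_k,j_k$ ($k=1,\dots,4$) and $c_3,c_4,w_1,w_2,t$ of the statement, and substituting a general solution back into \eqref{eq:havuy} produces the explicit entries collected in Table~1.

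I expect the main obstacle to be purely one of bookkeeping rather than of idea: there are many basis triples and components, and the off-diagonal $\sqrt2$'s and sign conventions of \eqref{eq:havuy} make it easy to introduce errors. The cleanest safeguard—and the route actually taken here—is to derive the table as a specialization of the algebraic curvature data already computed in \cite[Table 1]{FK} (see also \cite[Table 1]{VolkIII}): one restricts those tensors to the subalgebra $\fhIII\subset\fhI$ by setting the $u$-components to zero and verifies that the Bianchi identity survives the restriction, reducing the whole claim to a finite, mechanical comparison.
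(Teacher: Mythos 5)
Your proposal is correct and, in the end, takes essentially the same route as the paper: the paper's entire proof consists of the observation that the table is an immediate consequence of \cite[Table 1]{FK} (see also \cite[Table 1]{VolkIII}), which is precisely your final ``safeguard'' of specializing the known curvature data to $\fhIII\subset\fhI$ by setting the $u$-components to zero (noting that $\mathcal K(\fhIII)$ is exactly the subspace of $\mathcal K(\fhI)$ with values in $\fhIII$, so no separate Bianchi verification is even needed). Your preliminary sketch of a direct computation of the Bianchi system organized by $\fgl(2,\RR)$-weights is a sensible fallback but is not what the paper does.
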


Besides the well known Lie algebra $\fsl(2,\RR)$, the Lie algebras
$$\fco(2):=\left\{\left. \begin{pmatrix}
    a&-b\\b&a 
\end{pmatrix}\ \right|\ a\in\RR\right\} ,\quad \fd=\{\diag(d_1,d_2)\mid d_1, d_2\in\RR\}$$ will appear in the theorem below. Furthermore, we will use the abelian Lie algebras $\fm(1,0,k)$, $k=1,2$ introduced in \eqref{eq:m}.

\begin{theo}\label{TFK}{\rm \cite{FK,FKsigma,VolkII,VolkIII}}
A subalgebra $\fh\subset\fg_2^*$  is an indecomposable holonomy algebra of type {\rm III} if and only if it is conjugate by an element of $\G$ to one 
 of the following Lie algebras:
\begin{enumerate}
\item $\fa\ltimes \fm(1,0,2)$ with $\fa\in \{\fsl(2,\RR),\, \fgl(2,\RR),\, \fco(2),\fd\}$,
\item $\fa\ltimes \fm(1,0,k)$ for some $k\in\{1,2\}$ with $\fa\in \{0,\, \RR\cdot\diag(1,0)\}$.\end{enumerate}
\end{theo}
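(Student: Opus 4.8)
The theorem is an equivalence, so the plan is to prove the two implications separately, in each case first reducing to the model subalgebra $\fhIII$. By the paragraph preceding the theorem, every indecomposable $\fh\subset\g$ of type III is $\G$-conjugate into $\fhIII=\fgl(2,\RR)\ltimes_\rho[\fm,\fm]$ (via \cite[Lemma 2.10]{FK}), so throughout I would assume $\fh\subset\fhIII$ and classify up to conjugacy. The central tool is Berger's first criterion recalled above: a holonomy algebra satisfies $\fh=\Span\{R(x,y)\mid x,y\in\RR^{4,3},\ R\in{\cal K}(\fh)\}$, and since $\fh\subset\fhIII$ forces ${\cal K}(\fh)\subset{\cal K}(\fhIII)$, the explicit parametrization of ${\cal K}(\fhIII)$ in Proposition~\ref{Pholtab1} controls everything.

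For the ``only if'' direction I would decompose $\fh$ via the projection $\fa=\pro_A(\fh)\subset\fgl(2,\RR)$ and its nilpotent part $\fn:=\fh\cap[\fm,\fm]$, so that $\fh$ is an extension of $\fa$ by the $\rho|_{\fa}$-invariant abelian subalgebra $\fn\subset[\fm,\fm]$. Two constraints pin down $\fn$: it must be $\rho$-invariant under $\fa$, and indecomposability together with type III (equivalently, semisimplicity of the subrepresentation on $\Span\{e_2,e_3\}$) forbids $\fh$ from leaving invariant a non-degenerate subspace. Examining the three-dimensional $[\fm,\fm]$ with coordinates $(v,y_1,y_2)$ then shows that the only invariant possibilities compatible with these requirements are $\fm(1,0,1)$ and $\fm(1,0,2)$, up to conjugation; this is essentially the computation carried out for the abelian case in Lemma~\ref{lm:abelian}, now run without assuming $\fh$ abelian. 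The semisimplicity constraint simultaneously restricts $\fa$ to act completely reducibly on $\RR^2=\Span\{e_2,e_3\}$, which singles out the reductive candidates $0,\ \RR\cdot\diag(1,0),\ \fd,\ \fco(2),\ \fsl(2,\RR),\ \fgl(2,\RR)$ and excludes, e.g., nilpotent one-dimensional $\fa$. Finally I would invoke Berger's criterion in its sharp form: reading off from Table~1 which elements of $\fhIII$ lie in $\Span\{R(x,y)\}$ for $R\in{\cal K}(\fh)$ eliminates the remaining non-occurring pairs and fixes exactly which $\fa$ pairs with $\fm(1,0,2)$ (namely $\fsl(2,\RR),\fgl(2,\RR),\fco(2),\fd$) versus which pairs with a general $\fm(1,0,k)$ (namely $0$ and $\RR\cdot\diag(1,0)$).

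For the ``if'' direction there are two things to check for each algebra on the list. First, that it is a Berger algebra: this is a direct verification from the curvature data of Proposition~\ref{Pholtab1}, exhibiting for each listed $\fh$ tensors $R\in{\cal K}(\fh)$ whose images $R(e_i,e_j)$ already span $\fh$, together with the check that the natural representation is indecomposable and of type III (socle three-dimensional). Second, and more substantially, that each is actually realized as a holonomy and not merely as a Berger algebra; this is precisely the content of the local metric constructions in \cite{FKsigma,VolkII,VolkIII} (with the full case $\g$ treated in \cite{FK}), which I would cite rather than reprove.

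The main obstacle is the necessity direction, specifically the coupling between the $\fa$-part and the nilpotent part. It does not suffice to list the admissible $\fa\subset\fgl(2,\RR)$ and the admissible nilradicals separately: Berger's criterion, applied through the explicit curvature tensors of Table~1, must be used to show precisely which combinations survive --- in particular why $\fm(1,0,1)$ is compatible only with $\fa\in\{0,\RR\cdot\diag(1,0)\}$ while a larger $\fa$ forces the full $\fm(1,0,2)$. Getting this correlation right, while keeping careful track of the type III (semisimplicity) hypothesis throughout the case analysis, is the technical heart of the argument; by contrast the realizability half, although analytically deep, can be imported wholesale from the cited papers.
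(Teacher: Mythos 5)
This theorem is one the paper does not prove at all: it is quoted from the literature, with \cite{FK} supplying the classification of the indecomposable type {\rm III} Berger algebras inside $\g$ and \cite{FKsigma,VolkII,VolkIII} supplying local metrics that realize each of them as an actual holonomy algebra. Your proposal reproduces this two-part logical structure correctly; in particular you are right that being a Berger algebra is necessary but not sufficient, so the ``if'' direction must rest on the metric constructions, which you cite exactly as the paper does.

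The gap is that your ``only if'' half is an outline whose decisive steps are asserted rather than performed. First, a subalgebra $\fh\subset\fhIII$ with $\pro_A(\fh)=\fa$ and $\fn=\fh\cap[\fm,\fm]$ is a priori only an \emph{extension} of $\fa$ by $\fn$, while the list consists of semidirect products, so one must prove that the extension splits up to $\G$-conjugacy. In Lemma~\ref{lm:abelian} this splitting is achieved by the conjugations $\exp\big(h(0,v_0/\tr(A_0),0)\big)$ and $\exp\big(h(0,0,A_0^{-1}y_0)\big)$, and that argument leans on abelianness in an essential way (the contradiction $\pro_v([h_0,h_1])=\tr(A_0)\neq 0$ exists only because $\fh$ is abelian); your claim that the general case is ``essentially'' the same computation run without the abelian hypothesis therefore substantially understates what is needed. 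Second, the correlation between $\fa$ and $k$ cannot be obtained from $\rho$-invariance and semisimplicity on $\Span\{e_2,e_3\}$ alone, which is all your outline actually invokes: $\fd\ltimes\fm(1,0,1)$ is a genuine $\rho$-invariant subalgebra of $\fhIII$ whose action on $\Span\{e_2,e_3\}$ is semisimple, yet it is absent from the list. To exclude it one must compute ${\cal K}(\fd\ltimes\fm(1,0,1))$ by imposing on Table~\ref{tab:t1} that every value $R_{ij}$ has diagonal $A$-part and vanishing second $y$-component; this kills all parameters except $b_1,w_1,j_3$, and the surviving values span only the proper subalgebra $\RR\cdot\diag(1,0)\ltimes\fm(1,0,1)$, so Berger's criterion fails. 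This spanning computation, repeated for every candidate pair $(\fa,k)$, is the technical core of the classification in \cite{FK}, and nothing in your proposal carries it out or even identifies where it bites. The honest version of your argument would cite \cite{FK} for the necessity half as well --- which is precisely what the paper does.
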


\section{Left-invariant $\G$-structures and their holonomy}

Let $G$ be a 7-dimensional Lie group endowed with a parallel left-invariant $\G$-structure $\omega$. Let $\ip$ be the induced left-invariant metric on $G$ and let $\ip$ denote also the corresponding metric on the Lie algebra $\fg$ of $G$.

For $X\in\fg$, let $\hat X$ be the right-invariant (i.e., the fundamental) vector field on $G$. We use Wang's description of invariant connections by
$\Lambda(X)=(\nabla_{\hat X}-{\cal L}_{\hat X})_e$. Then $\Lambda(X)Y=(\nabla_{\hat X}Y)_e$, where we identify elements of $\fg$ with left-invariant vector fields. Since the commutator of left- and right-invariant vector fields vanishes, we obtain $$\Lambda(X)Y= \nabla_{X}Y, \quad \forall \,X,Y\in \fg.$$ Thus $\Lambda$ can be seen as linear map $\Lambda:\fg\to \fso(\fg,\ip)$.

Let $R$ denote the curvature tensor of $(G,\ip)$, which is also left-invariant and thus can be seen as a linear tensor on $\fg$. Explicitly, 
\[R(X,Y)=[\Lambda(X),\Lambda(Y)]-\Lambda([X,Y]) \]
for $X,Y\in\fg$. Let $\fh_e\subset\fso(\fg,\ip)$ denote the holonomy algebra of the pseudo-Riemannian Lie group $(G,\ip)$ at $e$.  We define
\begin{equation}\label{eq:m0}
    \fr_0:= \Span\{R(X,Y)\mid X,Y\in \fg\}=\Span\{ [\Lambda(X),\Lambda(Y)]-\Lambda([X,Y])\mid  X,Y\in\fg\}.
\end{equation}    
According to \cite[Cor. 4.2.]{KN2}, the holonomy algebra $\fh_e$  is given by
\begin{equation} \fh_e = \fr_0+[\Lambda(\fg),\fr_0]+ [\Lambda(\fg),[\Lambda(\fg),\fr_0]]+\dots. \label{eq:KNhol} \end{equation}

Let $\mathfrak{e}=(e_1, \ldots, e_7)$ be a basis of its Lie algebra $\fg$ such that $\omega=\omega_0$, where $\omega_0$ is as in \eqref{Eomega}, then $\ip$ is as in~\eqref{Eip}.  Using this basis we identify $\fh_e$ with a subalgebra $\fh\subset\g$, which is also called holonomy algebra. If it is necessary to be more precise, we will say that $\fh$ is the holonomy with respect to $\mathfrak{e}$. We use the basis also to identify $\Lambda(X)\in\fso(\fg,\ip)$, $X\in\fg$, with a map in $\fso(4,3)$, which we also denote by $\Lambda(X)$. The resulting map $\Lambda$ with values in $\fso(4,3)$ will also be called Levi-Civita connection with respect to the basis $\mathfrak{e}$. Since our $\G$-structure is parallel and left-invariant, we have $0=\nabla_X\omega=\Lambda(X)^*\omega$ if $X\in \fg$, which implies $\Lambda(\fg)\subset\g$, i.e. $\Lambda(X)\in\g$ for all $X\in\fg$.

The following result gives properties on the Levi-Civita connection of $(G,\ip)$ by assuming that the holonomy algebra is contained in the Lie algebra $\fhIII$ defined in \eqref{eq:sdp}.

\begin{pr}\label{pro:educg}
 Suppose that 
 $\fh$ above is one of the Lie algebras listed in Theorem~\ref{TFK}, and thus $\fh=\fa\ltimes\fm(1,0,k)\subset\fhIII$. Then:
\begin{enumerate}
\item If $\fa$ is one of the Lie algebras $\fsl(2,\RR)$, $\fgl(2,\RR)$, ${\fco}(2)$ or $\fd$, then $\Lambda(\fg)\subset\fhIII$. Moreover, if $\fa\in\{ \fgl(2,\RR),\fco(2),\fd \}$, then $\pro_A(\Lambda(X))\in \fa$ for all $X\in\fg$.

\item If $\fa=\RR\cdot\diag(1,0)$, then $\Lambda(\fg)\subset \{h(A,v,u,y)\mid A\in\fd,\ u=(0,u_2)\}\subset\fh^I$.
\end{enumerate}
\end{pr}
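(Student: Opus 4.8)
The plan is to reduce the proposition to a normalizer computation inside $\g$. The key observation is that formula~\eqref{eq:KNhol} exhibits the holonomy $\fh$ as the smallest $\ad(\Lambda(\fg))$-invariant subspace of $\g$ containing $\fr_0$; in particular $[\Lambda(\fg),\fh]\subset\fh$. Since we already know $\Lambda(\fg)\subset\g$, this says that $\Lambda(X)\in N_{\g}(\fh):=\{Z\in\g\mid[Z,\fh]\subset\fh\}$ for every $X\in\fg$. Thus it suffices to compute the normalizer $N_{\g}(\fh)$ for each $\fh=\fa\ltimes\fm(1,0,k)$ occurring in Theorem~\ref{TFK} and to check that it is contained in the algebra claimed in (i), respectively (ii).

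To carry this out I would grade $\g$ by the parabolic $\fhI$. The semisimple element $h(I_2,0,0,0)$ has eigenvalues $2,1,1,0,-2,-1,-1$ on $e_1,\dots,e_7$, and the resulting $\ad$-eigenspace decomposition is a grading $\g=\bigoplus_{p=-3}^{3}\mathfrak q_p$ with $\mathfrak q_{\ge 0}=\fhI$, $\fm=\bigoplus_{p\ge 1}\mathfrak q_p$, $[\fm,\fm]=\mathfrak q_2\oplus\mathfrak q_3$, and hence $\fhIII=\mathfrak q_0\oplus\mathfrak q_2\oplus\mathfrak q_3$. Here $\mathfrak q_0=\fgl(2,\RR)$; as $\mathfrak q_0$-modules, $\mathfrak q_1$ is the $u$-plane ($\cong\RR^2$, standard representation), $\mathfrak q_2$ the $v$-line (the $\tr$-representation), $\mathfrak q_3$ the $y$-plane, and $\mathfrak q_{-p}\cong\mathfrak q_p^{*}$; the negative part is precisely the span of the five matrices by which $\g$ exceeds $\fhI$. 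Note that $\fm(1,0,k)\subset\mathfrak q_2\oplus\mathfrak q_3$ and $\fa\subset\mathfrak q_0$, so in every case $\fh$ meets $\mathfrak q_{-1}$ and $\mathfrak q_1$ trivially.

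Writing $Z=\sum_p Z_p\in N_{\g}(\fh)$ and reading $[Z,\fh]\subset\fh$ off in each degree, the negative components must vanish first. Note $\mathfrak q_2\subset\fh$ and $Y_1:=h(0,0,(1,0))\in\fm(1,0,k)\subset\fh$ in all cases. I would verify by explicit matrix brackets that (a) the adjoint action of a nonzero $v$-element gives nonzero maps $\mathfrak q_{-3}\to\mathfrak q_{-1}$ and $\mathfrak q_{-1}\to\mathfrak q_1$, which are $\fsl(2,\RR)$-equivariant (because $\fsl(2,\RR)$ acts trivially on $\mathfrak q_2$) and hence isomorphisms by Schur's lemma, both spaces being the standard module; and (b) $[\mathfrak q_{-2},Y_1]\neq 0$. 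Because $\fh\cap\mathfrak q_{\pm1}=0$, the membership $[Z,\fh]\subset\fh$ forces $[Z_{-3},\mathfrak q_2]=[Z_{-1},\mathfrak q_2]=[Z_{-2},Y_1]=0$, and by (a),(b) these give $Z_{-3}=Z_{-1}=Z_{-2}=0$. Hence $N_{\g}(\fh)\subset\fhI$, and only $Z_0,Z_1$ remain: the $\mathfrak q_1$-part obeys $\fa\cdot Z_1=0$, i.e. $Z_1\in\bigcap_{A\in\fa}\ker A\subset\RR^2$, and the $\mathfrak q_0$-part obeys $Z_0\in N_{\fgl(2,\RR)}(\fa)$.

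For (i), each of $\fsl(2,\RR),\fgl(2,\RR),\fco(2),\fd$ contains an invertible matrix (for instance $\diag(1,-1)$ or $I_2$), so $\bigcap_{A\in\fa}\ker A=0$, giving $Z_1=0$ and $N_{\g}(\fh)\subset\fhIII$. A short direct computation gives $N_{\fgl(2,\RR)}(\fa)=\fa$ for $\fa\in\{\fgl(2,\RR),\fco(2),\fd\}$, which yields the ``moreover'' claim $\pro_A(\Lambda(X))=Z_0\in\fa$; for $\fsl(2,\RR)$ one only obtains $N_{\fgl(2,\RR)}(\fsl(2,\RR))=\fgl(2,\RR)$, consistent with the weaker statement made in that case. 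For (ii), with $\fa=\RR\cdot\diag(1,0)$ one computes $N_{\fgl(2,\RR)}(\fa)=\fd$ and $\ker\diag(1,0)=\RR\cdot(0,1)$, so $Z_0\in\fd$ and $Z_1$ lies in the $u_2$-direction; this is exactly the algebra $\{h(A,v,u,y)\mid A\in\fd,\ u=(0,u_2)\}$ (for $k=1$ the further relation $[Z,\fm(1,0,1)]\subset\fh$ only shrinks it, keeping it inside this algebra). The only genuine computations are the three graded brackets underlying (a),(b) and the two small normalizer calculations in $\fgl(2,\RR)$; the rest is bookkeeping along the grading. The one conceptual subtlety is case (ii): since $\diag(1,0)$ has nonzero trace it fails to act without fixed lines on $\RR^2$, and this surviving $u_2$-direction is exactly why the conclusion there is larger than $\fhIII$.
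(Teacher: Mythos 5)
Your proposal is correct, and its skeleton coincides with the paper's: both start from the observation that \eqref{eq:KNhol} forces $\Lambda(\fg)\subset N_{\g}(\fh):=\{Z\in\g\mid[Z,\fh]\subset\fh\}$, and both end with the same computation inside $\fhI$ (the $A$-part of a normalizing element must normalize $\fa$ in $\fgl(2,\RR)$, and its $u$-part must be annihilated by $\fa$), followed by the same small normalizer and kernel calculations in $\fgl(2,\RR)$. Where you genuinely diverge is the intermediate containment $N_{\g}(\fh)\subset\fhI$. The paper gets this in two lines by a conceptual argument: if $N$ normalizes $\fh$, then $\Ad(\exp tN)$ preserves $\fh$, hence preserves the socle of its representation, which for every algebra in Theorem~\ref{TFK} is $\Span\{e_1,e_2,e_3\}$; the stabilizer of that subspace in $\g$ is read off from \eqref{eq:g2} to be exactly $\fhI$. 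You instead introduce the parabolic grading $\g=\bigoplus_{p=-3}^{3}\mathfrak q_p$ determined by $h(I_2,0,0,0)$, use that $\fh$ is a graded subalgebra of $\mathfrak q_0\oplus\mathfrak q_2\oplus\mathfrak q_3$, and kill the negative components $Z_{-1},Z_{-2},Z_{-3}$ of a normalizing element by bracketing against $h(0,1,0,0)\in\mathfrak q_2$ and $h(0,0,(1,0))\in\mathfrak q_3$, with $\fsl(2,\RR)$-equivariance plus Schur's lemma reducing the needed injectivity to one nonzero bracket per map. The brackets you deferred to ``explicit verification'' do hold: for instance, in the coordinates of \eqref{eq:g2}, $\ad$ of the $v$-element carries the $s_{13}$-direction of $\mathfrak q_{-3}$ onto the $s_7$-direction of $\mathfrak q_{-1}$, and $\ad$ of the $\mathfrak q_{-2}$-generator carries $h(0,0,(1,0))$ onto the $s_9$-direction of $\mathfrak q_1$; so your argument is complete once these are written out. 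The trade-off is clear: the paper's socle argument is shorter and requires no matrix work, but it leans on the canonicity of the socle for the specific algebras at hand; your grading argument costs three explicit brackets but is self-contained and would bound the normalizer of any graded subalgebra of $\mathfrak q_0\oplus\mathfrak q_2\oplus\mathfrak q_3$ in the same way.
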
 

\begin{proof} Since $G$ is equipped with a parallel left-invariant $G_2^*$-structure, we have that $\Lambda(\fg)\subset \g$.  Moreover, from \eqref{eq:KNhol}, $[\Lambda(X),\fh]\subset\fh$. Therefore, for all $X\in\fg$, $\Lambda(X)$ is in the normalizer of $\fh$ inside $\g$. Let $N \in\g$ be in the normalizer of $\fh$. Then conjugation by $\exp (tN)$ maps $\fh$ to $\fh$. In particular, $\exp(tN)$ maps the socle to the socle. Thus the same is true for $N$. Since for all Lie algebras in Theorem~\ref{TFK}, the socle is spanned by $e_1$, $e_2$ and $e_3$, we obtain $$N\in \big\{h\in\g\mid h(\Span\{e_1,e_2,e_3\})=\Span\{e_1,e_2,e_3\}\big\}=\fhI$$
using~\eqref{eq:g2}. Hence $N=h(A,v,u,y)$, for some $h(A,v,y,y) \in \fhI$, thus
$[N,h(\bar A,\bar v,0,\bar y)] = h([A,\bar A],*,\bar Au,*)\in\fh\subset\fhIII$ for all $h(\bar A,\bar v,0,\bar y)\in\fh$. This shows that $A$ belongs to the normalizer of $\fa$ in $\fgl(2,\RR)$, which can be easily determined.  Moreover we see that $u$ must be annihilated by all elements of $\fa$. This proves the claim.
\end{proof}

We end this section by introducing some notation that will be used throughout the rest of the article. For each $i,j=1, \ldots 7$, we denote $\Lambda_i:=\Lambda(e_i)\in \g$ and 
\begin{equation} \label{eq:R}
R_{ij}:=R(e_i,e_j)=[\Lambda_i,\Lambda_j]-\Lambda([e_i,e_j])\in\g.
\end{equation}

\begin{re}\label{rem:Aspa}
Let $\fh\subset\fhIII$ be a holonomy algebra of type {\rm III} and  let $\fa$ be as in \eqref{eq:fa}. Then $\fa$ is contained in the Lie algebra generated by $A_i:=\pro_A(\Lambda(e_i))$, $i=1, \ldots, 7$. In fact, by \eqref{eq:KNhol},
$\fa$ is linearly spanned by  
\begin{equation}\label{eq:Aij}
A_{ij}:=\pro_A(R_{ij}) =\pro_A([\Lambda_i,\Lambda_j]-\Lambda([e_i,e_j]))=[A_i,A_j]-\sum_{k=1}^7c_{ij}^kA_k
\end{equation}
and their Lie brackets with $A_i$, where $c_{ij}^k$ denote the structure coefficients. 
\end{re}

We will frequently use the second Bianchi identity 
\begin{equation}
B_{ijk}:=-\sum_{cycl} (\nabla_{e_i}R)(e_j,e_k)=\sum_{cycl}( R(\Lambda_i e_j,e_k)+R( e_j,\Lambda_i e_k)-[\Lambda_i,R_{jk}])=0,\label{eq:2B}
\end{equation}
for all $i,j,k=1,\dots,7$. We denote by $B_{ijk}(m,n)$ the $(m,n)$-entry of $B_{ijk}\in\g$.

 \section{Metric Lie algebras with Levi-Civita connection in ${\mathfrak h}^{\rm III}$}

Along this section $\fg$ is a Lie algebra endowed with the metric \eqref{Eip} on a given basis $\mathfrak{e}=(e_1, \ldots, e_7)$. Suppose that its Levi-Civita connection $\Lambda$ with respect to $\mathfrak{e}$ satisfies  $\Lambda(\fg)\subset \fhIII$, so that
\begin{equation}
    \label{eq:lami}
    \Lambda_i=\Lambda(e_i)=h(A_i,v_i,(y_{i1},y_{i2})),\ A_i=\begin{pmatrix} a_{i1} & a_{i2} \\ a_{i3}& a_{i4}\end{pmatrix},\ i=1, \ldots, 7.
\end{equation}
In particular, the left-invariant 3-form $\omega_0$ defined by \eqref{Eomega}  is parallel. 

Using the torsion free property of the Levi-Civita connection and the form of $\ip$ in \eqref{Eip}, we get:

\begin{pr} \label{pr:brackets} If $\Lambda(\fg)\subset \fhIII$ then  the Lie bracket relations between the basis elements $e_1, \ldots, e_7$ are given in Table \ref{A1.h3}.
\end{pr}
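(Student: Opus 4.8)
The plan is to obtain every bracket directly from the torsion-free property of the Levi-Civita connection. Since $\Lambda$ is torsion-free and, for left-invariant fields, $\Lambda(X)Y=\nabla_XY$, we have
\[
[e_i,e_j]=\nabla_{e_i}e_j-\nabla_{e_j}e_i=\Lambda_ie_j-\Lambda_je_i,\qquad i,j=1,\dots,7.
\]
Hence the whole bracket table is determined once we know how each $\Lambda_i=h(A_i,v_i,(y_{i1},y_{i2}))$ acts on the standard basis, and computing the brackets becomes pure linear algebra.

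The first step is to record the action of a generic $h(A,v,y):=h(A,v,0,y)\in\fhIII$ on $e_1,\dots,e_7$. Here the form of the metric \eqref{Eip} enters: it is precisely the requirement $\Lambda_i\in\fso(\fg,\ip)$, together with $u=0$, that fixes the matrix shape \eqref{eq:havuy}, so that $h\cdot e_j$ is simply its $j$-th column. Writing $A=\left(\begin{smallmatrix}a_1&a_2\\ a_3&a_4\end{smallmatrix}\right)$ and $\tr(A)=a_1+a_4$, one reads off
\begin{align*}
h\cdot e_1&=\tr(A)\,e_1,\quad h\cdot e_2=a_1e_2+a_3e_3,\quad h\cdot e_3=a_2e_2+a_4e_3,\quad h\cdot e_4=\sqrt2\,v\,e_1,\\
h\cdot e_5&=y_1e_2+y_2e_3+\sqrt2\,v\,e_4-\tr(A)\,e_5,\\
h\cdot e_6&=-y_1e_1-v\,e_3-a_1e_6-a_2e_7,\quad h\cdot e_7=-y_2e_1+v\,e_2-a_3e_6-a_4e_7.
\end{align*}

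The second step is to substitute the data $(A_i,v_i,(y_{i1},y_{i2}))$ of each $\Lambda_i$ from \eqref{eq:lami} into these formulas and form the differences $\Lambda_ie_j-\Lambda_je_i$ for all $1\le i<j\le 7$; collecting the results yields exactly the entries of Table \ref{A1.h3}. I expect the only genuine effort to be bookkeeping: keeping track of signs, of the traces $\tr(A_i)=a_{i1}+a_{i4}$, and of the $\sqrt2$ factors attached to the $v_i$. It is worth flagging that no further metric-compatibility relations need to be imposed at this stage: because each $\Lambda_i$ is already skew with respect to \eqref{Eip}, the Koszul formula is automatically consistent with the torsion-free identity above, so the brackets are exactly these differences, the Jacobi identity being the only remaining (and logically separate) constraint on the parameters.
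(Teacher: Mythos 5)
Your proposal is correct and follows exactly the paper's own argument: the paper likewise derives Table \ref{A1.h3} from the torsion-free identity $[x,y]=\Lambda(x)y-\Lambda(y)x$ together with the explicit matrix form \eqref{eq:havuy} (with $u=0$) of elements of $\fhIII$, and your column-by-column reading of the action of $h(A,v,y)$ on $e_1,\dots,e_7$ is accurate. You merely spell out the linear-algebra bookkeeping that the paper leaves implicit, and your closing remark that the Jacobi identity is a separate constraint on the parameters matches the paper's comment following the proposition.
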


\begin{proof} 
Using that $[x,y]=\Lambda(x)y-\Lambda(y)x$, for all $x,y\in\fg$, and \eqref{eq:h3}, one can compute the Lie brackets of basis vectors. 
\end{proof}

Notice that the coefficients  appearing in Table \ref{A1.h3} have to satisfy nontrivial relations due to the Jacobi identity.

Let $R$ denote the curvature tensor and let  $\fh$ denote the holonomy Lie algebra of $(\fg,\ip)$ (or of the associated simply connected Lie group with the induced left-invariant pseudo-Riemannian metric) with respect to the basis $\mathfrak{e}$. If  $\Lambda(\fg)\subset \fhIII$ then, clearly, $R\in \mathcal K(\fhIII)$ but also, for every fixed $x\in \fg$, $\nabla_x R \in \mathcal{K}(\fhIII)$ since $\nabla_x R$ has the same symmetries as $R$. Therefore, $R$ and $\nabla_x R$ have the form in Table \ref{tab:t1} by Proposition \ref{Pholtab1}; in addition, by \eqref{eq:KNhol}, $\fh\subset \fhIII$. We use these properties to give necessary conditions for $\fh$ to be indecomposable.

\begin{lm}\label{lm:Rdec}Suppose $\Lambda(\fg)\subset \fhIII$. 
 If $R_{15}=R_{45}=0$ or, equivalently, $R_{15}=0$ and $w_1=w_2=0$, then $\fh$ is decomposable. 

In particular, if $\Lambda_1=0$ and $\Lambda_4=h(0,v_4,0)$, then $\fh$ is decomposable.
\end{lm}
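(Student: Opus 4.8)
The plan is to produce a proper non-degenerate $\fh$-invariant subspace of $\RR^{4,3}$. First I would record how $\fhIII$ acts on the basis: $h(A,v,y)$ sends $e_1\mapsto \tr(A)e_1$, $e_4\mapsto \sqrt2\,v\,e_1$ and $e_5\mapsto y_1e_2+y_2e_3+\sqrt2\,v\,e_4-\tr(A)e_5$. This already shows that the naive candidate $\Span\{e_1,e_4,e_5\}$ is invariant only when $\pro_y(\fh)=0$, which will fail in general because brackets with $\Lambda(\fg)$ can create $y$-components. The correct target is the much cheaper line $\Span\{e_4\}$, which is non-degenerate since $\ip(e_4,e_4)=-1$ and is $\fh$-invariant as soon as $\pro_v(\fh)=0$ (then $h\,e_4=\sqrt2\,\pro_v(h)\,e_1=0$). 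So the whole statement reduces to proving that every element of $\fh$ has $\tr(A)=0$ and $\pro_v=0$.

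To get this I would use that $R\in\mathcal{K}(\fhIII)$, so the first Bianchi identity in the form $R_{ij}e_5=-R_{j5}e_i+R_{i5}e_j$ is available. Applying it to the triple $(1,j,5)$ and using $R_{15}=0$ gives, from the $e_1$-entry, $\tr(A_{j5})=0$ for all $j$; the $e_5$-entry of the general identity then forces $\tr(A_{ij})=0$ for all $i,j$. Symmetrically, the triple $(4,j,5)$ with $R_{45}=0$ gives $v_{j5}=0$ for all $j$ from the $e_1$-entry, and the $e_4$-entry of the general identity yields $v_{ij}=0$ for all $i,j$. Hence $\fr_0=\Span\{R_{ij}\}$ lies in $\fs:=\{h(A,0,y)\mid \tr(A)=0\}$.

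The structural point that makes the argument close up is that $\fs$ is an \emph{ideal} of $\fhIII$: for $h(A',v',y')\in\fhIII$ and $h(A,0,y)\in\fs$ the bracket has traceless $\fgl$-part $[A',A]$, vanishing $v$-part (the term $\tr(A)v'$ dies because $\tr(A)=0$), and $y$-part $(A'+\tr(A')I_2)y-Ay'\in\RR^2$, so it stays in $\fs$. Since $\Lambda(\fg)\subset\fhIII$ by hypothesis, $\fs$ is $\ad(\Lambda(\fg))$-invariant, and the series \eqref{eq:KNhol} gives $\fh=\fr_0+[\Lambda(\fg),\fr_0]+\dots\subset\fs$. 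Thus $\pro_v(\fh)=0$, the line $\Span\{e_4\}$ splits off orthogonally, and $\fh$ is decomposable. The equivalence with ``$R_{15}=0$ and $w_1=w_2=0$'' I would read off from Proposition~\ref{Pholtab1}: once $R_{15}=0$, the $\fgl$- and $v$-parts of $R_{45}$ are already zero, so $R_{45}$ is controlled by its $y$-part $(w_1,w_2)$. For the ``in particular'' clause, $\Lambda_1=0$ gives $R_{15}=-\Lambda([e_1,e_5])=\tr(A_5)\Lambda_1=0$, while $\Lambda_4=h(0,v_4,0)\in[\fm,\fm]$ makes $R_{45}=[\Lambda_4,\Lambda_5]-\Lambda([e_4,e_5])$ have vanishing $y$-part, i.e. $w_1=w_2=0$; the equivalence then returns the hypotheses of the general case.

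The step I expect to be the real obstacle is exactly this propagation from the curvature $\fr_0$ to the full holonomy $\fh$: because $\Lambda(\fg)$ itself may have non-zero $y$-component, the brackets in \eqref{eq:KNhol} can in principle reintroduce components that one has just killed, and no coordinate $3$-plane survives. The decisive idea is therefore twofold — to isolate the vanishing that is genuinely stable under $\ad(\Lambda(\fg))$ by recognising $\fs$ as an ideal, and to realise the decomposition through the one-dimensional line $\Span\{e_4\}$ rather than through $\Span\{e_1,e_4,e_5\}$.
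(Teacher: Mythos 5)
Your proof is correct, and its skeleton coincides with the paper's: show that every curvature endomorphism has traceless $\fgl(2,\RR)$-part and vanishing $v$-part, observe that this property is stable under bracketing with $\Lambda(\fg)\subset\fhIII$ (your ideal $\fs$; the paper phrases this via \eqref{eq:rhoh1} and induction along \eqref{eq:KNhol}), and conclude that the non-degenerate line $\RR e_4$ is $\fh$-invariant. The one genuine difference is how you establish the property for the generators $R_{ij}$: you derive $\tr(\pro_A(R_{ij}))=\pro_v(R_{ij})=0$ for all $i,j$ directly from the first Bianchi identity (the triples $(1,j,5)$ and $(4,j,5)$ together with the $e_5$- and $e_4$-entries of the general identity $R_{ij}e_5=-R_{j5}e_i+R_{i5}e_j$), whereas the paper reads off from Table~\ref{tab:t1} that $R_{56}$ and $R_{57}$ are the only generators whose trace or $v$-part can be non-zero, and that these are killed exactly by $R_{15}=0$ and $R_{45}=0$. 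Your route is more self-contained — it uses only the matrix form \eqref{eq:havuy} and the definition of ${\mathcal K}(\fhIII)$, not the full parametrization of Proposition~\ref{Pholtab1} — at the cost of a slightly longer computation; the paper's version is shorter because Table~\ref{tab:t1} already packages these Bianchi consequences. Note that you still rely on Table~\ref{tab:t1} where it is genuinely needed: to interpret $w_1,w_2$ in the equivalence clause and to conclude $R_{45}=0$ from $\pro_y(R_{45})=0$ in the ``in particular'' part, which you handle exactly as the paper does. One small imprecision: the $A$- and $v$-parts of $R_{45}$ vanish by Table~\ref{tab:t1} unconditionally, not only ``once $R_{15}=0$''; this does not affect the argument.
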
 
\begin{proof}
We use Table \ref{tab:t1}: from $R_{15}=0$, we get $\tr(A_{56})=\tr(A_{57})=0$, and  $R_{45}=0$ implies $\pro_v(R_{56})=\pro_v(R_{57})=0$. This implies that for all $h\in \fr_0$, where $\fr_0$ is in \eqref{eq:KNhol}, one has $\tr(\pro_A(h))=0=\pro_v(h)$. By \eqref{eq:rhoh1} also the elements of $[\fr_0,\Lambda(\fg)]$ have this property. By induction we obtain from \eqref{eq:KNhol} that this is also true for all elements of $\fh$, thus $e_4$ is invariant under all elements of $\fh$. Therefore $\RR e_4$ is a non-degenerate subspace which is invariant under $\fh$, thus $\fh$ is decomposable. 

As for the second assertion, if $\Lambda_1=0$, then \eqref{c15} and $R_{ij}=[\Lambda_i,\Lambda_j]-\Lambda([e_i,e_j])$ imply $R_{15}=0$. Furthermore, $[\Lambda_4,\Lambda_5]=[h(0,v_4,0),\Lambda_5]=h(0,-\tr(A_5)v_4,0)$ by \eqref{eq:rhoh1} and thus $R_{45}=h(0,-\tr(A_5)v_4,0)-\sqrt v_4\Lambda_4$ by \eqref{c45} and $\Lambda_1=0$. That is, $\pro_y(R_{45})=0$. By Table \ref{tab:t1} this implies $R_{45}=0$. 
\end{proof}

\begin{lm}\label{lm:A1}
    If $\Lambda(\fg)\subset\fhIII$ and if $2(\tr(A_{r}))^2+\det(A_{r})\not=0$ and $6(\tr(A_{r}))^2+\det(A_{r})\not=0$  for $r=1$ or $r=4$, then $\fh$ is decomposable.
\end{lm}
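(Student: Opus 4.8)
By Lemma \ref{lm:Rdec} it suffices to prove that $R_{15}=R_{45}=0$ (equivalently $R_{15}=0$ and $w_1=w_2=0$). So the plan is to show, under whichever of the two hypotheses holds (for $r=1$ or for $r=4$), that these two curvature endomorphisms vanish, and then to quote Lemma \ref{lm:Rdec}.

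\textbf{Translating the hypotheses.} For a $2\times2$ matrix $A$ with $t=\tr(A)$ one has $\det(A+sI)=\det(A)+st+s^2$, so that
\begin{equation*}
\det\!\big(A_r+\tr(A_r)I\big)=2(\tr A_r)^2+\det(A_r),\qquad \det\!\big(A_r+2\tr(A_r)I\big)=6(\tr A_r)^2+\det(A_r).
\end{equation*}
Thus the two hypotheses say precisely that the operators $A_r+\tr(A_r)I$ and $A_r+2\tr(A_r)I$ are invertible. Note that by \eqref{eq:rhoh1} the first of these is $\rho(A_r)$ acting on the $y$-slot of $[\fm,\fm]$, i.e. it governs how $\Lambda_r$ moves the $\pro_y$-part of an element of $\fhIII$.

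\textbf{Where the two operators arise.} The map $\Lambda_r=h(A_r,v_r,y_r)$ preserves the flag and acts with a definite weight on the distinguished vectors modulo lower pieces: $\Lambda_r e_1=\tr(A_r)e_1$, $\Lambda_r e_4\in\Span\{e_1\}$, and $\Lambda_r e_5\equiv-\tr(A_r)e_5$ modulo $\Span\{e_2,e_3,e_4\}$. I would use the second Bianchi identity in the form $(\nabla_{e_r}R)(e_j,e_k)=[\Lambda_r,R_{jk}]-R(\Lambda_r e_j,e_k)-R(e_j,\Lambda_r e_k)$, together with the fact that $\nabla_{e_r}R\in{\cal K}(\fhIII)$ has the shape of Table \ref{tab:t1}. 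Tracking only the $\pro_y$-component, the contribution of $[\Lambda_r,R_{jk}]$ is $(A_r+\tr(A_r)I)\pro_y R_{jk}$, while the diagonal part of the two remaining terms subtracts $(\lambda_j+\lambda_k)\pro_y R_{jk}$, where $\lambda_j,\lambda_k$ are the $\Lambda_r$-weights of $e_j,e_k$. Hence the coefficient of $\pro_y R_{jk}$ is $A_r+(\tr(A_r)-\lambda_j-\lambda_k)I$, which gives
\begin{align*}
\pro_y(\nabla_{e_r}R)(e_1,e_5)&=\big(A_r+\tr(A_r)I\big)\,\pro_y R_{15}+(\text{terms in }R_{12},R_{13},R_{14}\text{ and }y_r),\\
\pro_y(\nabla_{e_r}R)(e_4,e_5)&=\big(A_r+2\tr(A_r)I\big)\,\pro_y R_{45}+(\text{terms in }R_{15},R_{24},R_{34}\text{ and }y_r),
\end{align*}
since $\lambda_1+\lambda_5=0$ but $\lambda_4+\lambda_5=-\tr(A_r)$. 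This is exactly where the two invertible operators of the hypotheses enter.

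\textbf{Finishing and the main obstacle.} The order of work is then: first deal with $R_{15}$, then with $R_{45}$. All of the cross terms above have both arguments in $U=\Span\{e_1,e_2,e_3,e_4\}$, i.e. they are curvature values on the socle together with $e_4$, and these are heavily constrained by Table \ref{tab:t1} and the first Bianchi identity; I expect them to contribute nothing (and the $R_{15}$ that appears in the second line is removed once the first step is complete). With the cross terms gone, the left-hand sides are themselves $\pro_y$-entries of a tensor in ${\cal K}(\fhIII)$ and are seen to vanish, so the two displays reduce to $(A_r+\tr(A_r)I)\pro_y R_{15}=0$ and $(A_r+2\tr(A_r)I)\pro_y R_{45}=0$. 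Invertibility forces $\pro_y R_{15}=\pro_y R_{45}=0$, and the remaining components of $R_{15}$ and $R_{45}$ follow from the parametrization of Table \ref{tab:t1}, giving $R_{15}=R_{45}=0$; Lemma \ref{lm:Rdec} then yields that $\fh$ is decomposable. The main obstacle is precisely this last bookkeeping: isolating $A_r+\tr(A_r)I$ and $A_r+2\tr(A_r)I$ cleanly from all the other curvature terms in the second Bianchi identity, and carrying out the two cases $r=1$ and $r=4$ in parallel, since their connection matrices $\Lambda_r$ differ in shape (for $r=4$ the image $\Lambda_4 e_4=\sqrt2\,v_4 e_1$ lands in the socle, which changes which cross terms survive).
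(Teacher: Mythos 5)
Your reduction to Lemma~\ref{lm:Rdec}, your reading of the two hypotheses as invertibility of $A_r+\tr(A_r)I_2$ and $A_r+2\tr(A_r)I_2$, and your two expansions are all correct: since $R_{12}=R_{13}=R_{14}=R_{24}=R_{34}=0$ by Table~\ref{tab:t1}, one gets exactly $\pro_y\big((\nabla_{e_r}R)(e_1,e_5)\big)=(A_r+\tr(A_r)I_2)\,\pro_y R_{15}$ and $\pro_y\big((\nabla_{e_r}R)(e_4,e_5)\big)=(A_r+2\tr(A_r)I_2)\,\pro_y R_{45}-\sqrt2\,v_r\,\pro_y R_{15}$. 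The gap is your final vanishing claim. Membership of $\nabla_{e_r}R$ in ${\cal K}(\fhIII)$ does \emph{not} force its $(1,5)$- and $(4,5)$-entries to vanish: Table~\ref{tab:t1} parametrizes precisely these entries by free parameters, namely $(b_1+b_4,\,b_3+c_4)$ and $(\sqrt2 w_1,\sqrt2 w_2)$. If your claim were valid, you could apply it to $R$ itself (which also lies in ${\cal K}(\fhIII)$) and conclude $R_{15}=R_{45}=0$ unconditionally, making the hypotheses on $A_r$ superfluous and the lemma vacuous. Note also that the identity $(\nabla_{e_r}R)(e_j,e_k)=[\Lambda_r,R_{jk}]-R(\Lambda_r e_j,e_k)-R(e_j,\Lambda_r e_k)$ is not the second Bianchi identity but merely the formula for the covariant derivative of a left-invariant tensor; by itself it imposes no constraint.

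The missing input is the cyclic identity $B_{ijk}=0$ of \eqref{eq:2B}, and it cannot be extracted from your index pairs: for $r=1$ the triple $(1,1,5)$ is degenerate, and the triple $(1,4,5)$ only yields $(\nabla_{e_1}R)(e_4,e_5)=(\nabla_{e_4}R)(e_1,e_5)$ (its third cyclic term dies since $R_{14}=0$), i.e.\ a relation between two uncontrolled quantities rather than a vanishing statement. The paper instead chooses triples so that two of the three cyclic terms die in the tracked component: it applies $B_{r56}=0$ and $B_{r57}=0$ tracking $\tr\circ\pro_A$, and $B_{r67}=0$ tracking $\pro_y$; the terms involving $\nabla_{e_5}R$, $\nabla_{e_6}R$, $\nabla_{e_7}R$ drop out because every tensor in ${\cal K}(\fhIII)$ satisfies $R_{16}=R_{17}=R_{46}=R_{47}=0$ and has vanishing $\pro_A$-part in its $(1,5)$- and $(4,5)$-entries. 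Expanding the surviving terms gives $(A_r+\tr(A_r)I_2)(\tr A_{56},\tr A_{57})=0$ and $(A_r+2\tr(A_r)I_2)(w_1,w_2)=v_r(\tr A_{56},\tr A_{57})$, which, since $(\tr A_{56},\tr A_{57})=\pro_y R_{15}$ and $\sqrt2\,(w_1,w_2)=\pro_y R_{45}$ by Table~\ref{tab:t1}, are exactly the equations you were aiming for; the invertibility hypotheses then finish the argument as you describe.
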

\begin{proof}
    For $r\in \{1,4\}$, using Table~\ref{tab:t1} for $\nabla_{e_r} R$ we get  \begin{eqnarray*}
    \tr(\pro_A (B_{r56})) &=\ \tr(\pro_A ((-\nabla_{e_r} R)(e_5,e_6))) &=\ (-2a_{r1}-a_{r4})\tr(A_{56})-a_{r2}\tr(A_{57}),\\
    \tr(\pro_A(B_{r57}))& =\ \tr(\pro_A ((-\nabla_{e_r} R)(e_5,e_7)))&=\ (-a_{r1}-2a_{r4})\tr(A_{57})-a_{r3}\tr(A_{56}),
    \end{eqnarray*}
hence 
\begin{equation}\label{eq:1567}
(A_r+\tr(A_r)I_2)(\tr(A_{56}), \tr(A_{57}))=0.
\end{equation}
Moreover, $0=\pro_y (B_{r67})=\pro_y(-(\nabla_{e_r}R)(e_6,e_7))$ gives 
\begin{equation}\label{eq:y167}
(A_r+2\tr(A_r)I_2)(w_1,w_2)=v_r(\tr(A_{56}), \tr(A_{57})).
\end{equation}
Since $\det(A_r+n\tr(A_r)I_2)=n(n+1)(\tr(A_r))^2+\det(A_r)$ for $n\in\NN$, our assumption ensures that \eqref{eq:1567} and \eqref{eq:y167} imply $w_1=w_2=\tr(A_{56})=\tr(A_{57})=0$. The assertion now follows from Lemma~\ref{lm:Rdec}.
\end{proof}

Making use of the fixed basis, we define
\begin{equation}\label{eq:gidef}\fg_1:=\Span\{e_1\},\ \fg_2:=\Span\{e_2,e_3\},\ \fg_j:=\{e_1,\dots,e_j\},\ j=3,\dots,7.
\end{equation}

As a consequence of Proposition \ref{pr:brackets} and Table \ref{A1.h3} one can deduce:

\begin{pr}\label{pro:subalg}
If $\Lambda(\fg)$ is contained in $\fhIII$, then $\fg_j$ are subalgebras of $\fg$ for $j=1,\dots,5$ and $\fg_3$ is an ideal in $\fg_4$. Moreover, $[e_1,e_4]\in\RR e_1$. 
\end{pr}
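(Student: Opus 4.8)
The plan is to read off everything directly from the Lie bracket relations encoded in Table \ref{A1.h3} (equivalently, from the formula $[e_i,e_j]=\Lambda_i e_j-\Lambda_j e_i$ together with the explicit shape of $h(A,v,y)$ in \eqref{eq:h3}). The key structural fact is that every $\Lambda_i\in\fhIII$ is \emph{upper triangular} with respect to the flag $\fg_1\subset\fg_2'\subset\cdots$: inspecting \eqref{eq:h3}, an element $h(A,v,y)$ sends $e_1\mapsto \tr(A)e_1$, sends $\Span\{e_1,e_2,e_3\}$ into itself, sends $e_4$ into $\Span\{e_1,e_2,e_3,e_4\}$ modulo lower terms, and so on. So the work reduces to checking, column by column, that $\Lambda_i e_j$ has no component outside the appropriate $\fg_k$ whenever $i,j\le k$.

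First I would prove that each $\fg_j$ is a subalgebra for $j=1,\dots,5$. For $\fg_1=\Span\{e_1\}$ this is vacuous. For the inductive step it suffices to show $\Lambda_i e_j\in\fg_{\max(i,j)}$ for all $i,j\le 5$, since then $[e_i,e_j]=\Lambda_i e_j-\Lambda_j e_i\in\fg_{\max(i,j)}\subset\fg_5$. Reading off \eqref{eq:h3}: applying $h(A,v,y)$ to $e_1$ gives $\tr(A)e_1$; to $e_2,e_3$ gives combinations in $\Span\{e_1,e_2,e_3\}=\fg_3$; to $e_4$ gives a vector whose only entries are in rows $1$--$4$, i.e. a vector in $\fg_4$; and applying it to $e_5$ produces (from column $5$ of the matrix) entries in rows $1$--$5$, hence a vector in $\fg_5$. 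Thus $\Lambda_i e_j\in\fg_{\max(i,j)}$ for $i,j\le 5$, and each $\fg_j$ ($j\le5$) is closed under the bracket. This is the bulk of the claim and is entirely routine once the column structure of \eqref{eq:h3} is laid out.

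Next, to see that $\fg_3$ is an ideal in $\fg_4$, I would check $[e_4,e_j]\in\fg_3$ for $j=1,2,3$. For $j\in\{2,3\}$, $[e_4,e_j]=\Lambda_4 e_j-\Lambda_j e_4$; here $\Lambda_4 e_j\in\fg_3$ (column $2,3$ of \eqref{eq:h3} lands in $\Span\{e_1,e_2,e_3\}$), and $\Lambda_j e_4\in\fg_3$ as well, since reading column $4$ of \eqref{eq:h3} shows the image of $e_4$ under $h(A,v,y)$ has vanishing $e_4$-component (the $(4,4)$ entry is $0$) and no higher components, landing in $\Span\{e_1,e_2,e_3\}$. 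For $j=1$ the same two reductions give $[e_4,e_1]\in\fg_3$, but in fact one gets more: $\Lambda_4 e_1=\tr(A_4)e_1\in\RR e_1$, and $\Lambda_1 e_4\in\Span\{e_1\}$ because column $4$ applied with the constraint $u=0$ (we are in $\fhIII$, so the $e_2,e_3$ components of $\Lambda_1 e_4$ carry the factor $u_1,u_2=0$) leaves only the $e_1$-entry $\sqrt2\,v_1$. Hence $[e_1,e_4]\in\RR e_1$, which is the final assertion.

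The only genuinely delicate point is the last one, $[e_1,e_4]\in\RR e_1$: this uses crucially that we are inside $\fhIII$ and not merely $\fhI$, so that the $u$-component vanishes and the $e_2,e_3$ entries of $\Lambda_1 e_4$ drop out; I would make sure to invoke \eqref{eq:h3} (rather than \eqref{eq:havuy}) at exactly this step. Everything else is bookkeeping in the explicit matrix, so the proof can be stated compactly as ``apply $[e_i,e_j]=\Lambda_i e_j-\Lambda_j e_i$ and read off Table \ref{A1.h3},'' with the triangular column structure of $h(A,v,y)$ doing all the real work.
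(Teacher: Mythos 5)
Your strategy --- read everything off from $[e_i,e_j]=\Lambda_i e_j-\Lambda_j e_i$ and the column structure of $h(A,v,y)$ in \eqref{eq:h3} --- is exactly the paper's proof, which simply derives the bracket table (Proposition \ref{pr:brackets}, Table \ref{A1.h3}) and reads the claims off it. Your treatment of $\fg_3$, $\fg_4$, $\fg_5$, of the ideal property of $\fg_3$ in $\fg_4$, and of $[e_1,e_4]\in\RR e_1$ (including the correct remark that the last point uses $u=0$, i.e.\ membership in $\fhIII$ rather than $\fhI$) is correct.

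There is, however, one genuine slip, caused by the definition \eqref{eq:gidef}: $\fg_2=\Span\{e_2,e_3\}$ does \emph{not} contain $e_1$, so the $\fg_j$ are not a nested flag. Consequently your intermediate claim ``$\Lambda_i e_j\in\fg_{\max(i,j)}$ for all $i,j\le 5$'' is false at $(i,j)=(2,1)$: there $\Lambda_2 e_1=\tr(A_2)\,e_1$, which lies in $\RR e_1$ but not in $\fg_2$ unless $\tr(A_2)=0$. More importantly, the facts you actually verify (columns $2,3$ of $h(A,v,y)$ land in $\fg_3$) only yield $[e_2,e_3]\in\fg_3$, which is weaker than what the proposition asserts for $j=2$, namely $[e_2,e_3]\in\Span\{e_2,e_3\}$. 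So, as written, the subalgebra claim for $\fg_2$ is not proved. The repair is one line and is the same $u=0$ observation you already make for column 4: in \eqref{eq:havuy} the $(1,2)$- and $(1,3)$-entries are $-u_2$ and $u_1$, so for elements of $\fhIII$ they vanish; hence columns $2$ and $3$ land in $\Span\{e_2,e_3\}=\fg_2$ and $[e_2,e_3]=\Lambda_2e_3-\Lambda_3e_2\in\fg_2$, in agreement with \eqref{c23}. With this correction --- and restricting your $\max$-claim to the genuinely nested part $\fg_1\subset\fg_3\subset\fg_4\subset\fg_5$ --- your proof is complete and coincides with the paper's.
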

\begin{re}\label{re:rep}
    Assume that $\Lambda(\fg)\subset\fhIII$. Table \ref{tab:t1} shows that $A_{ij}=0$ for $i,j=1,\dots,5$. Thus $e_j\mapsto A_j$, $j=1,\dots,5$, defines a representation of $\fg_5$.
\end{re}

\begin{re}\label{rem:vi} Assume that $\Lambda(\fg)\subset \fhIII$. Then $\Lambda_i=h(A_i,v_i,y_i)$ as in \eqref{eq:h3}. If $v_i=0$ for all $i=1, \ldots, 7$, then $\fh$ is decomposable. Indeed, if this holds then $\Lambda_i(e_4)=0$ for all $i=1, \ldots, 7$. This shows that the left-invariant vector field defined by $e_4$ is parallel. Hence $e_4$ is an invariant of the holonomy representation. Thus $\RR e_4$ is a non-degenerate subrepresentation of~$\fh$, so $\fh$ is decomposable.  
\end{re}

In the following we will sometimes change the basis of $\fg$ in order to simplify calculations. Then instead of $\mathfrak{e}=(e_1,\dots, e_7)$ we use a basis $ \mathfrak{e'}=(e_1,\dots, e_7)\cdot\Phi$, where $\Phi$ is a an element of $\G$ which preserves $\RR e_1$ and $\Span\{e_2,e_3\}$. These transformations constitute a subgroup of $\G$ whose Lie algebra is exactly $\fhIII$. In particular, $\Phi \fhIII \Phi^{-1}=\fhIII$. Let $\Lambda'$ denote the Levi-Civita connection with respect to $\mathfrak{e}'$ and $\fh'$ the holonomy algebra with respect to $\mathfrak{e}'$. Then $\Lambda'(X)=\Phi^{-1}\Lambda(X)\Phi\in\fhIII$ for all $X\in\fg$ and $\fh'=\Phi^{-1}\fh\Phi\subset\fhIII$. 

 In particular, we will use the following transformations:
\begin{eqnarray}
   \Phi_T &:=& \label{eq:tgl} \diag(\det T, T, 1, (\det T)^{-1}, (T^\top)^{-1}),\quad T\in\GL(2), \\
   \Phi_v&:=&\label{eq:tv} \exp h(0,v,0), \quad v\in \RR.
\end{eqnarray}
Transformations of the form $\Phi_v$ allow to add an arbitrary multiple of $e_1$ to $e_4$ and to extend this to a map in $\G$ which leaves $e_1,e_2,e_3$ invariant. 

For the next result we shall introduce some notation. Let $J$ denote the operator $$J(x,y,z):=[[x,y],z]+[[y,z],x]+[[z,x],y]$$ for $x,y,z\in\fg$. Furthermore, we put $J_{ijk}:=J(e_i,e_j,e_k)$ and define $J_{ijk}^l$ by $J_{ijk}=J_{ijk}^l e_l$.

\begin{lm}\label{lm:v1}
    If $\Lambda(\fg)\subset \fhIII$ and if $\tr(A_1)=0$ or $\tr(A_4)=0$, then $v_1=0$.
\end{lm}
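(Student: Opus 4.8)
The plan is to pin down $v_1$ through two scalar relations and then finish with a short case distinction according to whether $\tr(A_1)$ or $\tr(A_4)$ vanishes. The starting point is the single bracket $[e_1,e_4]$. Reading off the relevant columns of $\Lambda_1=h(A_1,v_1,y_1)$ and $\Lambda_4=h(A_4,v_4,y_4)$ from \eqref{eq:havuy} and using $[e_1,e_4]=\Lambda_1 e_4-\Lambda_4 e_1$, one gets $\Lambda_1 e_4=\sqrt2\,v_1\,e_1$ and $\Lambda_4 e_1=\tr(A_4)\,e_1$, so that
$$[e_1,e_4]=(\sqrt2\,v_1-\tr(A_4))\,e_1,$$
in accordance with Proposition \ref{pro:subalg}. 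I abbreviate $\lambda:=\sqrt2\,v_1-\tr(A_4)$.

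The first relation is obtained from Remark \ref{re:rep}. Since $e_j\mapsto A_j$ is a representation of $\fg_5$ and $[e_1,e_4]=\lambda e_1$, we have $[A_1,A_4]=\lambda A_1$; taking the trace (the left-hand side is traceless) gives
$$(\sqrt2\,v_1-\tr(A_4))\,\tr(A_1)=0. \qquad (\star)$$
The second, and decisive, relation is quadratic in $v_1$. I would read it off as the $e_4$-component of the Jacobi identity $J_{145}=0$: computing the double brackets $[[e_1,e_4],e_5]$, $[[e_4,e_5],e_1]$ and $[[e_5,e_1],e_4]$ from Proposition \ref{pr:brackets} and retaining only the coefficient of $e_4$, the contributions involving $\tr(A_4)$ cancel and one is left with
$$J_{145}^4=2v_1^2-\sqrt2\,\tr(A_1)\,v_4=0. \qquad (\star\star)$$
Equivalently, $(\star\star)$ is nothing but the vanishing of $\pro_v(R_{14})$: a direct computation of $\pro_v([\Lambda_1,\Lambda_4])-\pro_v(\Lambda([e_1,e_4]))$ yields $\tr(A_1)\,v_4-\sqrt2\,v_1^2$, and this entry is zero in Table \ref{tab:t1}, so the two routes to $(\star\star)$ coincide.

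With $(\star)$ and $(\star\star)$ in hand the conclusion follows by cases. If $\tr(A_1)=0$, then $(\star\star)$ forces $2v_1^2=0$, hence $v_1=0$. If instead $\tr(A_4)=0$, then $(\star)$ reads $\sqrt2\,v_1\,\tr(A_1)=0$: when $\tr(A_1)\neq0$ this gives $v_1=0$ at once, and when $\tr(A_1)=0$ we are back in the previous case. Thus under either hypothesis $v_1=0$.

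The main obstacle is relation $(\star\star)$. The trace relation $(\star)$ is by itself useless exactly when $\tr(A_1)=0$, which is one of the two allowed hypotheses, so the quadratic $v$-relation is indispensable rather than auxiliary. Establishing it—whether by carrying out the $J_{145}$ computation and checking that the $\tr(A_4)$-terms cancel, or by verifying from Table \ref{tab:t1} that the $v$-entry of $R_{14}$ vanishes and matching it against the bracket data—is the one genuinely computational step; the remainder is bookkeeping.
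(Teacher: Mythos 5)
Your proof is correct and takes essentially the same route as the paper, which deduces the lemma from the two Jacobi identities $J_{145}^4=2v_1^2-\sqrt2\,v_4\tr(A_1)=0$ and $J_{145}^5=(\tr(A_4)-\sqrt2\,v_1)\tr(A_1)=0$. Your relation $(\star\star)$ is literally $J_{145}^4$, and your relation $(\star)$ is the same scalar equation as $J_{145}^5$, merely derived through Remark~\ref{re:rep} (equivalently from $\pro_A(R_{14})=0$) instead of by direct computation, followed by the same case distinction.
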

\begin{proof}
    The assertion follows from the Jacobi identities $J_{145}^4=2 v_1^2-\sqrt2v_4\tr(A_1)=0$ and $J_{145}^5=(\tr (A_4)-\sqrt2 v_1)\tr(A_1)=0$.
\end{proof}

\begin{pr}\label{pr:Atrhdec}
 If $\Lambda(\fg)\subset \fhIII$ and $\fg_3$ defined in \eqref{eq:gidef} is not abelian, then $\fh$ is decomposable or the matrices $A_i$, $i=1, \ldots, 7$ are either all lower or all upper triangular.

\end{pr}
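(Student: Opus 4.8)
The plan is to assume that $\fh$ is indecomposable and to show that, when $\fg_3$ is non-abelian, the matrices $A_1,\dots,A_7$ admit a common invariant line in $V:=\Span\{e_2,e_3\}$. A transformation $\Phi_T$ as in \eqref{eq:tgl} (which conjugates each $A_i$ by $T$) carrying this line to $\Span\{e_2\}$, respectively $\Span\{e_3\}$, then puts all $A_i$ into upper, respectively lower, triangular form, which is the assertion.

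First I would record the bracket structure of $\fg_3$. From $[x,y]=\Lambda(x)y-\Lambda(y)x$, \eqref{eq:h3} and Table~\ref{A1.h3} one gets
\begin{equation*}
[e_1,e_2]=-\tr(A_2)\,e_1+A_1e_2,\qquad [e_1,e_3]=-\tr(A_3)\,e_1+A_1e_3,\qquad [e_2,e_3]=A_2e_3-A_3e_2,
\end{equation*}
where $A_ie_j\in V$ denotes the action of $A_i$ on $e_j\in V$. By Remark~\ref{re:rep} the map $e_j\mapsto A_j$ is a Lie algebra homomorphism on $\fg_5\supseteq\fg_3$, so applying it to these brackets yields
\begin{gather*}
[A_1,A_2]=-\tr(A_2)A_1+a_{11}A_2+a_{13}A_3,\qquad [A_1,A_3]=-\tr(A_3)A_1+a_{12}A_2+a_{14}A_3,\\
[A_2,A_3]=(a_{22}-a_{31})A_2+(a_{24}-a_{33})A_3.
\end{gather*}
In particular $\fc:=\Span\{A_1,A_2,A_3\}$ is a subalgebra of $\fgl(2,\RR)$, and the proof branches according to whether $\fc$ acts reducibly on $V$.

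In the irreducible case $\fc$ must be abelian: the only non-abelian subalgebras of $\fgl(2,\RR)$ acting irreducibly on $V$ are $\fsl(2,\RR)$ and $\fgl(2,\RR)$, the latter excluded by $\dim\fc\le3$ and the former because $\tr(A_1)=\tr(A_2)=\tr(A_3)=0$ would make $V$ a two-dimensional ideal of the simple algebra $\fg_3\cong\fsl(2,\RR)$ (the map $e_j\mapsto A_j$ being then an isomorphism onto $\fc$). An irreducible abelian $\fc$ is conjugate into $\fco(2)$ and leaves no real line invariant, so there I would force a decomposition through Lemma~\ref{lm:A1}, evaluating $2(\tr A_r)^2+\det A_r$ and $6(\tr A_r)^2+\det A_r$ for $r=1$ (or $r=4$). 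If instead $\fc$ is reducible and non-abelian, then it is solvable and leaves invariant a \emph{unique} line $\ell\subset V$ (its derived algebra is a nonzero nilpotent, whose kernel is the only invariant line). Since $\fg_3$ is an ideal of $\fg_4$ by Proposition~\ref{pro:subalg}, the matrix $A_4$ normalizes $\fc=A(\fg_3)$ and hence preserves $\ell$; once $\fg_3$ is seen from Table~\ref{A1.h3} to be an ideal of $\fg_5$ as well, the same applies to $A_5$, while the remaining $A_6,A_7$ (which lie outside the representation $e_j\mapsto A_j$) would be pinned down via the second Bianchi identity \eqref{eq:2B}. This produces a line invariant under all of $A_1,\dots,A_7$, and $\Phi_T$ delivers the triangular normal form.

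The main obstacle is twofold. First are the degenerate branches in which $\fc$ is reducible but \emph{abelian}, so that $\fg_3$ is non-abelian only through a nontrivial kernel of $e_j\mapsto A_j$: here the invariant line need not be unique — a split part may admit two invariant lines that some $A_i$ interchanges — and one must either single out the canonical line carried by the direction of $[e_2,e_3]$ or else exhibit a non-degenerate invariant subspace and conclude decomposability directly. Second is the irreducible $\fco(2)$-case, where decomposability has to be wrung out of the trace–determinant conditions of Lemma~\ref{lm:A1}; the configurations in which both $A_1$ and $A_4$ are non-generic (nilpotent or zero) escape that lemma and require a hands-on discussion of the Jacobi identities $J_{1jk}$, $J_{4jk}$ together with $R_{15}$ and $R_{45}$, forcing either $v_1=\dots=v_7=0$ or $R_{15}=R_{45}=0$ and hence decomposability via Remark~\ref{rem:vi} or Lemma~\ref{lm:Rdec}.
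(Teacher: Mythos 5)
Your organizing idea --- pass from $\fg_3$ to the subalgebra $\mathfrak{c}=\Span\{A_1,A_2,A_3\}\subset\fgl(2,\RR)$ via Remark~\ref{re:rep}, reformulate ``all $A_i$ triangular'' as ``common invariant line in $V=\Span\{e_2,e_3\}$'', and case on whether $\mathfrak{c}$ acts irreducibly --- is genuinely different from the paper's organization (which cases on the bracket structure of $\fg_3$ itself), and parts of it are correct: the exclusion of $\fsl(2,\RR)$ and $\fgl(2,\RR)$ in the irreducible branch is a nice argument, and $A_4$ does preserve the unique invariant line of a reducible non-abelian $\mathfrak{c}$, since $\fg_3$ is an ideal of $\fg_4$ by Proposition~\ref{pro:subalg}. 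However, the proof has genuine gaps. The most concrete one is a false claim: $\fg_3$ is \emph{not} an ideal of $\fg_5$ in general. By \eqref{c15}, \eqref{c25}, \eqref{c35}, the brackets $[e_j,e_5]$, $j=1,2,3$, contain the terms $-\tr(A_j)e_5+\sqrt2\,v_je_4$, and nothing in your hypotheses forces $\tr(A_j)=v_j=0$; this is exactly why Proposition~\ref{pro:subalg} asserts the ideal property only inside $\fg_4$. Consequently your argument that $A_5$ normalizes $\mathfrak{c}$ and hence preserves the line $\ell$ collapses. For $A_6$ and $A_7$ you give no argument at all: ``pinned down via the second Bianchi identity'' is a hope, not a proof. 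In the paper's proof, forcing the relevant entries $a_{53},a_{63},a_{73}$ (or their analogues) to vanish is precisely where the work lies, and it is achieved by case-specific chains of Jacobi and curvature identities; there is no indication that a uniform Bianchi-identity argument exists.

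Second, the two ``obstacles'' you acknowledge are not degenerate side cases but the bulk of the statement. The branch where $\mathfrak{c}$ is abelian while $\fg_3$ is non-abelian (non-abelianness living in the kernel of $e_j\mapsto A_j$) comprises, in the paper's proof, Cases 1.1.2, 1.2 and much of 2.2; and in some of these branches the correct conclusion is not triangularity but decomposability, obtained by proving $v_1=\dots=v_7=0$ (Remark~\ref{rem:vi}) or $R_{15}=R_{45}=0$ (Lemma~\ref{lm:Rdec}) through long explicit computations. Your proposal offers only the alternatives ``single out the canonical line'' or ``exhibit a non-degenerate invariant subspace'' without carrying either out, and it is not clear in advance which alternative holds in which configuration. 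Likewise, the irreducible branch with $A_1=A_4=0$, which escapes Lemma~\ref{lm:A1}, is deferred to an unspecified ``hands-on discussion''. In sum, you have a plausible and partly novel strategy whose skeleton covers the generic configurations, but one of its supporting claims is wrong as stated, and the decisive computations --- the ones that occupy the paper's proof --- are missing.
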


\begin{proof}
  By Proposition \ref{pr:brackets}  the Lie bracket relations are as in Table \ref{A1.h3}. Assume that $\fg_3$ is not abelian.

 By \eqref{c23}, $[e_2,e_3]=\lambda e_2+\mu e_3$, for some $\lambda,\mu\in\mathbb R$. If $[e_2,e_3]\neq 0$  then we can apply a transformation $\Phi_T$  as defined in \eqref{eq:tgl}, for $T=\begin{pmatrix}\lambda&\mu'\\\mu&\lambda'\end{pmatrix}$, where $\lambda'=\lambda/(\lambda^2+\mu^2),\mu'=-\mu/(\lambda^2+\mu^2)$, in order to obtain $[e_2,e_3]=e_2$. So we split the proof into two cases: $[e_2,e_3]=0$ or $[e_2,e_3]=e_2$. 

    
{\bf Case 1:} $[e_2,e_3]=0$. 

Depending on whether $\fg_2=\Span\{e_2,e_3\}$ is preserved by $e_1$ or not, we split into cases 1.1 and~1.2:

Case 1.1: $[e_1,\fg_2]\subset\fg_2$. By  \eqref{c12}, \eqref{c13} and \eqref{c23} we get the following identities
\begin{equation}
    \label{eq:case11}
    \tr(A_2)=\tr(A_3)=0, \; a_{24}=a_{33}, \; a_{22}=a_{31}.
\end{equation}
Then $\fg_3=\fg_1\ltimes \fg_2$, where $\fg_2$ is abelian and $e_1$ acts by an arbitrary linear map on the abelian ideal $\fg_2$. 
Since $\fg_3$ is not abelian,  the adjoint action of $e_1$ on $\fg_2$ is non-trivial.  It is given by $A_1$ with respect to the basis $e_2,e_3$ of $\fg_2$. If $\det(A_1)>0$ or if $\det(A_1)=0$ and $\tr(A_1)\not=0$, then $\fh$ is decomposable by Lemma~\ref{lm:A1}. So let us assume that $\det (A_1) < 0$ or that $\det(A_1)=\tr(A_1)=0$. Then $A_1 $ has two real eigenvalues that are either both equal to zero or both unequal to zero with opposite signs. We may change the basis of $\fg$ by a transformation $\Phi_T$ as in \eqref{eq:tgl}. Therefore we may assume that the action of $e_1$ has a certain normal form.  This leads to the following two cases, which correspond to the two possibilities for the eigenvalues of $A_1$.

Case 1.1.1: $\fg_3=\{[e_1,e_3]=e_2\}$ (The brackets between $\{\,\cdot\,\}$ are the only non-zero Lie bracket relations defining $\fg_3$.)

In addition to \eqref{eq:case11}, the form of $\fg_3$ introduces the additional identities
\begin{equation*}
a_{13}=a_{11}=a_{14}=0, \; a_{12}=1.
\end{equation*} In particular, this 
gives $v_1=0$  by Lemma~\ref{lm:v1}.

Proposition \ref{pro:subalg} implies that $\ad (e_4)$ is a derivation of $\fg_3$ and $[e_4,e_1]\in \RR e_1$, so we have $$\ad (e_4)=\left(\begin{array}{ccc} -r & 0 & -t \\ 0& -r-s& u \\ 0& 0 &-s \end{array}\right).$$ 
We may add a multiple of $e_1$ to $e_4$ (extendable to a map of the form $\Phi_v$ in \eqref{eq:tv}). Therefore we may assume $u=0$.
By \eqref{c14}
we get $\tr(A_4)=-r$. On the other hand $a_{41}=-r-s$ by \eqref{c24}, and $a_{44}=-s$ by \eqref{c34}. We obtain $s=0$. Consequently,
\[\fg_4=\{[e_1, e_3]=e_2, [e_1,e_4]=re_1, [e_2, e_4]=re_2, [e_3,e_4]=te_1\},\] which implies, by \eqref{c24} and \eqref{c34},
 \begin{eqnarray*}
  &a_{42}=a_{43}=a_{44}=0,\ v_2=0.&
\end{eqnarray*}
Furthermore, we already got $v_1=0$.
 We will show that $a_{j3}=0$ for all $j=1,\dots,7$. We already know that $a_{13}=a_{43}=0$.  Using Remark~\ref{re:rep} we obtain $[A_1,A_2]=0$ and $[A_1,A_5]=y_{12}A_3+y_{11}A_2-\tr(A_5)A_1$, which gives
 $a_{24}=a_{33}=a_{23}=0$ and $a_{53}=y_{12}a_{31}$. Note that $J_{137}^3=-a_{73}=0$. Now $J_{137}^2=2a_{74} + y_{12}$ together with $J_{157}^3=-y_{11}a_{73}+y_{12}(y_{12}-a_{74})=y_{12}(y_{12}-a_{74})$ implies $y_{12}=a_{74}=0$. Hence $a_{53}=0$ and finally, $J_{136}^3=a_{74}-a_{63}=-a_{63}$ gives $a_{63}=0$.
 Therefore, $A_i$ is upper triangular for all $i=1, \ldots, 7$.

Case 1.1.2: $\fg_3=\{[e_1,e_2]=e_2, [e_1, e_3]=r e_3\}$, $r<0$

In this case, in addition to \eqref{eq:case11}, we have  $A_1=\diag(1,r)$.
Using this and the identities $[A_1,A_2]=A_2$, $[A_1,A_3]=rA_3$ (see Remark~\ref{re:rep}), we get $A_2=A_3=0$. Moreover, $[A_1,A_4]=(\sqrt2 v_1-\tr(A_1))A_1$ yields $a_{42}=0$.

From the Jacobi identity we get
\begin{eqnarray*}
 &J_{136}^2=a_{62}(2-r)\quad  
J_{236}^2=-y_{31},\quad J_{135}^2=-2y_{31}+2a_{52},&
\nonumber
\end{eqnarray*}  which implies $a_{52}=a_{62}=0$. Thus $a_{j2}=0$ for all $j=1, \ldots,6$. Using \eqref{eq:R} we compute the curvature term $R_{173}^2=a_{72}-v_1a_{22}=a_{72}$ which must be zero by Table \ref{tab:t1}.
Therefore all $A_i$ are lower triangular. \

Case 1.2: $[e_1,\fg_2]\not\subset\fg_2$ 

Let $[e_1,e_2]=re_1+se_2+te_3$ and $[e_1,e_3]=pe_1+qe_2+le_3$.
We may assume $r=1$, $p=0$. $J_{123}=0$ implies 
$[e_1,e_3]=0$, which leads to $\fg_3=\{[e_1,e_2]=e_1+se_2+te_3\}.$

Here we have $a_{11}=s, a_{13}=t$, $a_{12}=a_{14}=0$. If $s\not=0$, then $\fh$ is decomposable by Lemma~\ref{lm:A1}. So let us assume that $s=0$. Then the
 structure of $\fg_3$ implies 
\[a_{11}=a_{12}=a_{14}=0,\ a_{13}=t,\ \tr(A_2)=-1,\ \tr(A_3)=0,\ a_{24}=a_{33},\ a_{22}=a_{31}. \]
In particular, $v_1=0$ by Lemma~\ref{lm:v1}. We will first show that $t=0$. Using Remark \ref{re:rep}, we have that $[A_1,A_2] = A_1 + t A_3$ and $[A_2, A_3]=0$. This implies that $t a_{31}=0$, $t(3a_{24}+2)=0$ and 
 \begin{equation}\label{eq:A23-rep}
2a_{23}a_{31}+2a_{24}^2+a_{24}=0.\end{equation} 
 Thus $t=0$, otherwise we obtain a contradiction.
 
 Furthermore, since $t=0$, we have $J_{124}^1=a_{41}=0$, $J_{134}^1=a_{42}=0$, and $J_{234}^1=\sqrt2 v_3=0$. Since we can add a multiple of $e_1$ to $e_4$, we may assume that $[e_2,e_4]$ does not have an $e_1$-component, thus $v_2=0$. Now we find 
\[J_{245}^1=-2\sqrt2v_5,\ J_{245}^4=\sqrt2 v_4,\]
which gives $v_1=\ldots=v_5=0$.  If $a_{44}\not=0$, then $\det(A_4)=0$ and $\tr(A_4)\not=0$, hence $\fh$ is decomposable by Lemma~\ref{lm:A1}.  
 So let $a_{44}=0$ in the following. This implies $a_{43}=0$. Indeed, from $[A_2, A_4]=-a_{43}A_3$, we conclude that $a_{31}a_{43}=0$ and $(3a_{24}+1)a_{43}=0$. If $a_{43} \neq 0$, this would contradict \eqref{eq:A23-rep}. Now  $J_{245}=y_{41}e_2+y_{42}e_3$ yields $y_{4j}=0$,  $j=1,2$.  Now $J_{256}^4+J_{347}^1=2\sqrt2 v_6$ implies $v_6=0$. Assume $v_7\not=0$, then 
$$J_{246}=\sqrt2 a_{31}v_7e_1=0,\ J_{247}=\sqrt2 v_7(a_{24}-1)e_1=0$$ 
would give $a_{31}=0$ and $a_{24}=1$, which again contradicts  \eqref{eq:A23-rep}. Therefore, $v_7=0$ and from the above all $v_i$ vanish, so $\fh$ is decomposable by Remark \ref{rem:vi}.

{\bf Case 2: }$[e_2,e_3]=e_2$.

We have $J_{123}^1=\tr(A_2)=0$. This implies  $a_{13}=-J_{123}^3-J_{125}^5=0$, i.e. $[e_1,e_2]=r e_2$,  which gives $[e_1,e_3]\in\Span\{e_1,e_2\}$ by Jacobi identity.  Thus  $a_{14}=0$. In particular $\tr(A_1)=a_{11}$ and $\det (A_1)=0$. If $a_{11}\not=0$,  Lemma~\ref{lm:A1} implies that $\fh$ is decomposable. So let us assume that $a_{11}=0$. From the bracket relations of $\fg_3$ we conclude that 
\begin{eqnarray*}
A_1 = \begin{pmatrix}
    0 & q\\ 0 &0
\end{pmatrix},\  a_{21}=-a_{33},\ a_{22}=1+a_{31},\ a_{24}=a_{33}.
\end{eqnarray*}

The above identities imply $J_{234}^3=- a_{43}=0$ and  $v_1=0$ by Lemma~\ref{lm:v1}.

 Case 2.1.: $q\neq 0$

Inspecting the Jacobi identities, and using $q\neq 0$, we obtain $a_{44}=0=v_2$ from $J_{134}^2$ and from $J_{135}^4$, respectively. The term $J_{126}$ yields $a_{73}=a_{23}=a_{33}=0$. Combining $J_{135}^3$ and $J_{125}^2$, we get $y_{12}=0$. From $J_{137}^2$ with $J_{156}^5$ results $a_{71}=a_{74}=0$. Finally, from $J_{156}^6$ and $J_{126}^2$, we conclude $a_{53}=a_{63}=0$. 
Therefore, all $A_i$ are upper-triangular.

Case 2.2.: $q=0$ 

In this case $A_1=0$, so 
$J_{234}^2=a_{44}=0$. 
This implies $J_{125}^2=y_{12}=0$ and $J_{135}^2=-y_{11}=0$, thus $\Lambda_1=0$. In particular, we obtain $R_{15}=0$. 
If $a_{41}\neq 0$ then $\fh$ is decomposable by Lemma \ref{lm:A1}. So we suppose $a_{41}=0$ for the remaining of the case.

Let us first assume that $a_{42}=0$, i.e. $A_4=0$.  From $J_{245}^2$ we obtain that $y_{42}=0$. Thus, if also $y_{41}=0$, we would have $\Lambda_1=0$ and $\Lambda_4=h(0,v_4,0)$, implying $\fh$  decomposable by Lemma \ref{lm:Rdec}. So let us assume that $y_{41}\neq 0$. From Remark \ref{re:rep}, we have that $[A_4,A_5]=y_{41}A_2$. Then $y_{41}\neq 0$ implies $a_{23}=a_{33}=0$ and $a_{31}=-1$. Since $J_{345}^2=(2-a_{34})y_{41}$ then $a_{34}=2$. From $[A_3,A_5]=-A_5+(y_{32}-a_{54})A_3$, we deduce $a_{53}=0$. Now, from $J_{234}^1$ we get $v_2=0$ and $J_{12k}^1$ yields $a_{k3}=0$ with $k=6,7$.  Thus all $A_i$, $i=1,\ldots, 7$, are upper triangular.

Let us now suppose that $a_{42}\neq 0$. Since $[A_3, A_4]=-a_{42}A_2$, then $a_{23}=a_{33}=0$,  $a_{34}=2a_{31}+1$ and $[A_2, A_3]=A_2$ implies $a_{31}(a_{31}+1)=0$. From $[A_3,A_5]$ we obtain  $a_{53}(2a_{31}+1)=0$ which implies $a_{53}=0$. Taking $J_{234}^1=v_2(3a_{31}+2)$ and $J_{246}^3=-a_{42}a_{73}$, we conclude $v_2=a_{73}=0$. If $a_{31}\neq 0$, then $J_{267}^7=a_{63}a_{31}$ implies $a_{63}=0$. If $a_{31}=0$, then we have 
\begin{equation*}
    J_{246}^2=-(a_{63}+a_{71})a_{42},\quad  J_{346}^3=(a_{63}-a_{74})a_{42}, \quad J_{237}^2=a_{74}-a_{71}
\end{equation*}
from which we finally obtain $a_{63}=0$. Thus all $A_i$, $i=1\cdots 7$, are upper triangular.
\end{proof}

\section{Exclusion of the case that ${\mathfrak a}$ is diagonal}

Let $G$ be a 7-dimensional Lie group endowed with a parallel left-invariant $\G$-structure $\omega$. Assume that its holonomy algebra $\fh$ is indecomposable of type III. By Theorem  \ref{TFK},  $\fh$ is conjugate to $\fa\ltimes \fm(1,0,k)$  where $\fa$ can be $0,\,\fsl(2,\RR),\, \fgl(2,\RR),\, \fco(2),\fd$, or $\RR\cdot\diag(1,0)$, and $\fm(1,0,k)$ is as in \eqref{eq:m} for some $k\in \{1,2\}$. 
This section is devoted to showing that the last two cases for $\fa$ cannot occur in our left-invariant context.

Let us be more precise. Let $\fh_e\subset\fso(\fg,\ip)$ be an indecomposable holonomy algebra of type~III. Then there exists a basis $\mathfrak{e}=(e_1,\dots,e_7)$ such that the holonomy with respect to $\mathfrak{e}$,  which we denote $\fh$, is equal to one of the Lie algebras $\fa\ltimes\fm(1,0,k)\subset \fhIII$ listed in Theorem~\ref{TFK} and such that $\omega=\omega_0$, where $\omega_0$ is as in \eqref{Eomega}. In the following, $\fa$ will always denote this Lie algebra $\fa\subset\fgl(2,\RR)$ uniquely determined by $\fh_e$. 

In the basis $\mathfrak e$ above, the Levi-Civita connection verifies $\Lambda(X)\in \fhI$  for all $X\in \fg$ by Proposition \ref{pro:educg}. Thus for each $i=1, \ldots, 7$, we set $\Lambda_i=h(A_i, v_i, (u_{i1},u_{i2}),(y_{i1},y_{i2}))$ as in \eqref{eq:lami}. In addition, the curvature $R$ is an element of $\mathcal K(\fhIII)$.  
From Table~\ref{tab:t1}, we know that $\fa\subset\fd$ implies 
\begin{equation}\label{eq:Rijzero}
0=R_{1i}=R_{2j}=R_{3k}=R_{4l}
,\quad
\forall i \neq 5,\; \forall j \neq 6,\; \forall k \neq 7, \; \forall l\neq 5.
\end{equation}
In addition, using the notation in \eqref{eq:havuy},
\begin{eqnarray}
    R_{15}&=&h(0, 0,(r,s)),\label{eq:R15d}\\
    R_{26}&=&h(0,0,(r,0)),\\
    R_{37}&=&h(0,0,(0,s)),\label{eq:R37d}\\
    R_{45}&=& h(0, 0,(\sqrt2 w_1,\sqrt2 w_2)),\label{eq:R45d_d}\\
    R_{56}&=&h\left(\diag(r,0),w_1,(j_3,t)\right),\label{eq:R56}\\
    R_{57}&=&h\left(\diag(0,s), w_2,(t,j_4)\right),\label{eq:R57}\\
    R_{67}&=&h(0, 0,(-w_1,-w_2)). \label{eq:R67d}
\end{eqnarray}

\begin{pr}\label{pro:diagg}
Let  $\fh_e\subset\fso(\fg,\ip)$ be indecomposable of type {\rm III}. Then $\fa \neq \fd$. 
\end{pr}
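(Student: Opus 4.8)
The plan is to argue by contradiction: assume $\fa = \fd$ and deduce that the holonomy representation must be decomposable, contradicting the hypothesis that $\fh_e$ is indecomposable of type III. The strategy is to exploit the very restrictive form of the curvature tensor recorded in \eqref{eq:R15d}--\eqref{eq:R67d}, together with the second Bianchi identity \eqref{eq:2B} and the Jacobi identities, to force the parameters $r,s,w_1,w_2,t$ and the associated connection data to vanish, and then to invoke Lemma~\ref{lm:Rdec} to conclude decomposability.

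First I would set up the connection data in the $\fa=\fd$ case. By Proposition \ref{pro:educg}(i), $\pro_A(\Lambda(X))\in\fa=\fd$ for all $X\in\fg$, so every $A_i$ is diagonal, say $A_i=\diag(a_{i1},a_{i4})$. I would then write down \eqref{eq:Aij} to express the diagonal entries $r$ (the $(1,1)$-curvature) and $s$ (the $(2,2)$-curvature) of $R_{56}$ and $R_{57}$ in terms of the $a_{ij}$ and the structure constants $c^k_{ij}$. Since by Remark~\ref{re:rep} we have $A_{ij}=0$ for $i,j\le 5$, the only nonzero $\pro_A$ values among the $R_{ij}$ are exactly those in \eqref{eq:R56}--\eqref{eq:R57}, which already shows the representation $e_j\mapsto A_j$ of $\fg_5$ lands in a commutative algebra and is highly constrained.

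The heart of the argument is to show $r=s=0$ and $w_1=w_2=0$, which by Lemma~\ref{lm:Rdec} immediately yields decomposability (since $R_{15}=0$ and $R_{45}=0$ is exactly the hypothesis there, via \eqref{eq:R15d} and \eqref{eq:R45d_d}). I would extract these vanishings by feeding the curvature components \eqref{eq:R15d}--\eqref{eq:R67d} into the second Bianchi identities $B_{ijk}=0$ for well-chosen triples — the natural candidates are those involving the indices $1,4,5,6,7$, since these are where the nonzero curvature lives. Computing the relevant entries $B_{ijk}(m,n)$ and reading off linear relations should express $\nabla R$ in terms of $A_i$ acting on $(r,s)$ and on $(w_1,w_2)$, much as in the proof of Lemma~\ref{lm:A1} where \eqref{eq:1567} and \eqref{eq:y167} arise. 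In parallel, I would use the Jacobi identities $J_{ijk}=0$ (as systematically as in Proposition~\ref{pr:Atrhdec}) to pin down the remaining $a_{ij}$, $v_i$ and $y_{ij}$, since the Lie-bracket relations of Table~\ref{A1.h3} couple the connection data to the structure constants.

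I expect the main obstacle to be the bookkeeping: unlike Lemma~\ref{lm:A1}, where a single nondegeneracy condition on $A_r$ closes the argument, here $\fa=\fd$ is two-dimensional and the matrices $A_i$ need not satisfy any eigenvalue genericity, so one cannot simply invert $A_r+n\tr(A_r)I_2$. The delicate point will be handling the degenerate subcases where some diagonal entries coincide or vanish; in those branches the Bianchi relations alone may not force $r,s,w_1,w_2$ to zero, and one must bring in the Jacobi identities to exclude them, possibly after normalizing the basis by transformations $\Phi_T$ and $\Phi_v$ from \eqref{eq:tgl}--\eqref{eq:tv} as in Proposition~\ref{pr:Atrhdec}. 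Once all these parameters are shown to vanish in every subcase, the conclusion $\fa\neq\fd$ follows from the decomposability criterion of Lemma~\ref{lm:Rdec}.
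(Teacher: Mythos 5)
Your overall frame (assume $\fa=\fd$ and derive a contradiction from the curvature) points in the right direction, but your proposal is missing the single idea that makes the argument close, and without it the computation you describe will stall. The paper's first and decisive step is this: once $\fa=\fd$ forces every $A_i$ to be diagonal (Proposition~\ref{pro:educg}), the brackets $[\Lambda_i,R_{56}]$ and $[\Lambda_i,R_{57}]$ have vanishing $\pro_A$-part, because $A_{56}=\diag(r,0)$ and $A_{57}=\diag(0,s)$ commute with all the diagonal $A_i$. Hence, by \eqref{eq:KNhol}, the $\fgl(2,\RR)$-part of the holonomy is exactly $\Span\{\diag(r,0),\diag(0,s)\}$, so the hypothesis $\fa=\fd$ forces $r\neq 0$ \emph{and} $s\neq 0$. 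This nondegeneracy is not a convenience; it is what turns the second Bianchi identity into usable information. For example, $B_{125}=0$ reduces (after the cancellations caused by \eqref{eq:Rijzero}) to $[\Lambda_2,R_{15}]=0$, i.e.\ to the homogeneous relations $(2a_{21}+a_{24})\,r=0$ and $(a_{21}+2a_{24})\,s=0$, which yield $A_2=0$ \emph{only because} $r\neq0$ and $s\neq0$; the identities $B_{135}$, $\pro_y(B_{257})$ and $\pro_y(B_{356})$ work the same way and give $A_3=0$ and $y_{21}=y_{22}=y_{31}=y_{32}=0$, whence $r=s=0$ from the explicit expressions of $r,s$ in terms of the connection data --- an immediate contradiction with $r,s\neq0$. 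Your plan, which never establishes $r,s\neq0$, would only ever extract such products set to zero; the ``degenerate subcases'' you rightly flag are precisely the branches where you have no tool to proceed, and neither Jacobi identities nor $\Phi_T$-normalizations in the style of Proposition~\ref{pr:Atrhdec} are shown (or needed in the paper) to exclude them.

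A second, smaller defect: your target overshoots. You aim at proving all four vanishings $r=s=w_1=w_2=0$ in order to invoke Lemma~\ref{lm:Rdec}, but once one knows that $\fa=\fd$ forces $r\neq0$ and $s\neq0$, proving $r=s=0$ alone is already the contradiction (it would give $\fa\subsetneq\fd$). The paper accordingly never proves $w_1=w_2=0$ and never uses Lemma~\ref{lm:Rdec} in this proposition; insisting on that route adds work whose feasibility you have not justified.
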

\begin{proof}
  Let $\mathfrak{e}$ and $\fh$ be as above and assume $\fa=\fd$. Proposition~\ref{pro:educg} implies that $A_i\in\fd$ for $i=1,\dots,7$,  i.e. $A_i=\diag(a_{i1},a_{i4})$. In particular,  by \eqref{eq:R56} and \eqref{eq:R57}, $\pro_A([\Lambda_i, R_{56}])=\pro_A([\Lambda_i, R_{57}])=0$ for all $i=1, \ldots, 7$.
Thus, $r\neq 0$ and $s\neq 0$ in \eqref{eq:R15d}--\eqref{eq:R67d}, since otherwise we would get $\fa\subsetneq\fd$.  

Computing the curvature tensor using \eqref{eq:R} and considering $R_{26}$ and $R_{37}$ as above, respectively, we have that 
\begin{eqnarray*}
    r = (a_{61}+a_{64}-y_{11})y_{21} - v_2y_{31} - (3a_{21}+2a_{24})y_{61}\\
    s = (a_{71}+a_{74}-y_{12})y_{32} + v_3y_{22} - (3a_{31}+2a_{34})y_{72}
\end{eqnarray*}
We will make use of the second Bianchi identity \eqref{eq:2B} to infer that $r=s=0$, thus leading to a contradiction. 

From $B_{125}=0$ and from $B_{135}=0$ we get that $A_2=A_3=0$. Now, considering $0=\pro_y(B_{257})$  we conclude $y_{21}=y_{22}=0$. Also, from $\pro_y(B_{356})$ we conclude $y_{31}=y_{32}=0$. Thus $r=s=0$, and the contradiction follows. 
\end{proof}

\begin{pr}\label{pr:anotdiag}
    Let   $\fh_e$ be indecomposable and of type {\rm III}. Then $\fa\not=\RR\cdot\diag(1,0)$.
\end{pr}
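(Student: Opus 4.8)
The plan is to argue by contradiction, parallel to the proof of Proposition~\ref{pro:diagg}, exploiting that $\fa=\RR\cdot\diag(1,0)$ forces a very rigid shape on the Levi-Civita connection and the curvature tensor, so that the second Bianchi identities collapse the nonzero curvature components and eventually produce an invariant non-degenerate subspace, contradicting indecomposability. First I would invoke Proposition~\ref{pro:educg}(2): since $\fa=\RR\cdot\diag(1,0)$, we have $\Lambda(\fg)\subset\{h(A,v,u,y)\mid A\in\fd,\ u=(0,u_2)\}$, so each connection matrix has the form $\Lambda_i=h(A_i,v_i,(0,u_{i2}),y_i)$ with $A_i=\diag(a_{i1},a_{i4})$ diagonal. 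In particular $A_{ij}=[A_i,A_j]-\sum_k c_{ij}^k A_k$ lands in $\fd$, and since $\fa=\RR\cdot\diag(1,0)$ is one-dimensional, by Remark~\ref{rem:Aspa} every curvature component $A_{ij}$ must be a multiple of $\diag(1,0)$. This immediately kills many entries of the algebraic curvature tensor of Table~\ref{tab:t1} and pins down which $R_{ij}$ can be nonzero, just as \eqref{eq:Rijzero}--\eqref{eq:R67d} did in the diagonal case.

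Next I would extract the surviving curvature data explicitly. The trace-free and diagonal constraints on the $A_{ij}$ force most of the $\fsl(2,\RR)$-type parameters $a_1,a_2,a_3$ and the $b_k,j_k$ of Proposition~\ref{Pholtab1} to vanish or to specialize, leaving only a short list of potentially nonzero components (by analogy with the $\fd$ case, something like $R_{15},R_{26},R_{45},R_{56},R_{67}$, with the $e_3,e_7$-directions suppressed because $\diag(1,0)$ annihilates the second coordinate). I would then impose that the one nonzero generator of $\fa$ actually appears, i.e. that $\pro_A(R_{56})$ (or the relevant component) is a nonzero multiple of $\diag(1,0)$; otherwise $\fa$ would be strictly smaller, contradicting the definition of $\fa(\fh_e)$. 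This isolates a single scalar, say $r\neq0$, that must be forced to vanish.

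The decisive step is then to feed the second Bianchi identities \eqref{eq:2B} into these relations. As in Proposition~\ref{pro:diagg}, I expect the identities $B_{1j5}=0$ and their relatives to force $A_2=A_3=0$ and then, through $\pro_y$ and $\pro_u$ components of suitable $B_{ijk}$, to force the relevant $y$- and $u$-entries (such as $y_{21},y_{31}$ and $u_{i2}$) to vanish. Substituting these back into the explicit expression for $r$ yields $r=0$, contradicting $r\neq0$; hence $\fa$ cannot equal $\RR\cdot\diag(1,0)$.

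\textbf{Main obstacle.} The hard part will be the bookkeeping in the second paragraph: unlike the clean diagonal case $\fa=\fd$, here the connection carries an extra off-diagonal datum $u_{i2}$ (the surviving component of $u$), so Table~\ref{tab:t1} no longer simplifies to the symmetric form \eqref{eq:R15d}--\eqref{eq:R67d}, and the asymmetry between the $e_2$- and $e_3$-directions (since $\diag(1,0)$ acts nontrivially only on the first) must be tracked carefully. The delicate point is to verify that the presence of the $u_{i2}$ terms does not open a loophole allowing $r\neq0$ to survive the Bianchi cascade; I would expect to need one or two additional Jacobi identities $J_{ijk}=0$, beyond the Bianchi identities, to control the $u$-components before the final substitution closes the argument.
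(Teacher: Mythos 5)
Your first two paragraphs coincide with how the paper's proof actually begins: Proposition~\ref{pro:educg}(2) gives $\Lambda_i=h(\diag(a_{i1},a_{i4}),v_i,(0,u_{i2}),(y_{i1},y_{i2}))$, Remark~\ref{rem:Aspa} forces every $A_{ij}$ to be a multiple of $\diag(1,0)$, so the curvature has the form \eqref{eq:Rijzero}--\eqref{eq:R67d} with $s=0$, and the scalar $r$ must be nonzero (the paper's reason: if $r=0$ then $\fr_0\subset\fm(1,0,2)$, hence $\fh\subset\fm(1,0,2)$ and $\fa=0$). The genuine gap is your decisive third paragraph: the cascade of Proposition~\ref{pro:diagg} provably does not transfer. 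In the $\fd$ case that cascade runs on having \emph{two} nonzero scalars: there $B_{125}=0$ reduces to $(A_2+\tr(A_2)I_2)(r,s)^\top=0$, and it is only because both $r\neq0$ and $s\neq0$ that this forces $A_2=0$. Here $s=0$, so the same identity yields only one scalar equation, a trace-type relation (further polluted by $u$-terms such as $u_{12}R_{15}$ entering through $R(\Lambda_1e_2,e_5)$); indeed, in the paper's proof the Bianchi identities deliver exactly the relations $a_{k4}=-2a_{k1}$ ($k=1,2,3,4,7$), formulas for $y_{11},y_{21},y_{31}$ and $v_1,\dots,v_4$ (Steps 1--3), and kill no $A_i$ whatsoever. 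Your ``Bianchi cascade'' stalls precisely there.

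What replaces it in the paper is a nine-step argument interleaving the Bianchi relations with expansions of $R_{ij}=[\Lambda_i,\Lambda_j]-\Lambda([e_i,e_j])$ via the bracket table for this case (Table~\ref{A1.rd}), with $r\neq0$ used over and over as a nondegeneracy lever: for instance one shows $a_{31}a_{11}=a_{31}u_{12}=a_{31}y_{11}=0$ while $r$ is a linear combination of $a_{11},y_{11},u_{12}$, whence $a_{31}=0$ and so $A_3=0$; iterating gives $A_1=0$, $A_4=0$, then --- and this is invisible from your sketch --- $a_{64}=2a_{61}\neq0$, so $A_6$ \emph{survives nonzero}, and only then $A_2=0$, $u_{12}=0$. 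Since $A_5,A_6,A_7$ are never all annihilated, the endgame cannot be ``substitute vanishing back into $r$'': instead one evaluates $r$ twice, once from $\diag(r,0)=A_{56}=(a_{51}+u_{62})A_6-\tr(A_6)A_5$, giving $2r=3(-2a_{51}+a_{54})a_{61}$, and once from $r=R_{155}^2=-y_{11}(y_{21}+a_{51})$, giving $4r=(-2a_{51}+a_{54})a_{61}$; comparing forces $r=0$, the contradiction. So you name the right target and the right tools, and you correctly flag the $u_{i2}$-obstacle, but the two missing ideas --- using $r\neq0$ as a lever inside the Lie-bracket expansions of the curvature rather than inside Bianchi identities alone, and deriving the contradiction from two incompatible evaluations of $r$ rather than from wholesale vanishing --- constitute essentially the entire content of the paper's proof.
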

\begin{proof}  Let $\mathfrak{e}$ and $\fh$ be as above and assume $\fa=\RR\cdot\diag(1,0)$.  Using that $[x,y]=\Lambda(x)y-\Lambda(y)x$ one can show that the Lie bracket relations between the basis elements $e_1, \ldots, e_7$ in $\mathfrak e$ are as in Table~\ref{A1.rd}. 
Note that Equations \eqref{eq:Rijzero}  --
\eqref{eq:R67d}
hold for $s=0$.
By Proposition \ref{pro:educg}, for $i=1,\dots,7$, the map $\Lambda_i$ has the form \[ \Lambda_i=h(\diag(a_{i1},a_{i4}),v_i,(0,u_{i2}),(y_{i1},y_{i2}))\in\fhI. \] This, together with \eqref{eq:KNhol}, implies that $r$ appearing in \eqref{eq:Rijzero}--\eqref{eq:R67d} verifies $r\neq 0$. Indeed, if $r=0$, then $\fr_0=\Span \{R(x,y)\mid x,y\in \fg\}\subset \fm(1,0,2)$ and thus 
$\fh\subset \fm(1,0,2)$, which implies $\fa=0$ contradicting our hypothesis. Now, we proceed as follows:

Step 1: $a_{k4}=-2a_{k1}$ holds for $k=1,2,3,4,7$: 

We project onto $\fa$ the second Bianchi identity in \eqref{eq:2B} for $(i,j,k)=(5,6,k)$, $k=1,\dots,4,7$. Recall that $A_{ij}=\pro_A(R_{ij})$, $i,j=1,\dots,7$. Since $A_{56}$ is the only $A_{ij}$ that does not vanish, this easily gives 
$0=(-2a_{k1}-a_{k4})A_{56}$, which implies the assertion.

Step 2: We have $y_{11}=(2a_{61}+a_{64})/4$ and  $y_{21}=-(2a_{51}+a_{54})/4$, and $y_{31}=0$:

Using \eqref{eq:R15d} -- \eqref{eq:R67d} we compute 
the first component of $\pro_y$ applied to both sides of \eqref{eq:2B}. This gives $0=4ry_{11}-(2a_{61}+a_{64})r$  for $(i,j,k)=(1,5,6)$, $0=4ry_{21}+(2a_{51}+a_{54})r$ for $(i,j,k)=(2,5,6)$, and $0=y_{31} r$ for $(i,j,k)=(3,5,6)$. Since $r\neq 0$, we obtain our claim.

Step 3: $v_i=-a_{i1} w_1/r$ for $i=1,\dots,4$:

Equations  \eqref{eq:Rijzero}, \eqref{eq:R37d} and Table \ref{tab:t1} for $\nabla_{e_6} R$ immediately imply  $(\nabla_{ e_6}R)_{7i}=0$ for $
i=1,2,3,4$. Furthermore, $(\nabla_{ e_7}R)_{i6}=0$ holds for $i=1,3,4$. For $i=2$, the first component of $\pro_y$ of the latter term equals $(2a_{71}+a_{74})r$, which vanishes by Step 1. Finally, the first component of $\pro_y(R(\Lambda_i e_6,e_7)+R( e_6,\Lambda_i e_7)-[\Lambda_i,R_{67}])$ is equal to $-a_{i1}w_1-v_ir$ for $i=1,\dots,4$. Now \eqref{eq:2B} gives the assertion.

Step 4: $A_3=0$, $u_{32}=0$: 

We have $0=R_{13}=[\Lambda_1,\Lambda_3]-\Lambda(a_{14}e_3-\tr(A_3)e_1)=[\Lambda_1,\Lambda_3]+\Lambda(2a_{11}e_3-a_{31}e_1)$ by Step 1. This implies
\[
   0=R_{132}^2=3a_{11}a_{31},\quad
   0=R_{132}^1=-a_{31}u_{12},\quad
   0=R_{135}^2=-a_{31}y_{11}, 
\]
where we used $y_{31}=0$ for the third identity. On the other hand, we have
$$R_{15}=[\Lambda_1,\Lambda_5]-\Lambda(u_{12}e_6-\tr(A_1)e_5+\sqrt2 v_1e_4+y_{12}e_3+y_{11}e_2-\tr(A_5)e_1).$$
Using Steps 1 and 3 and $y_{31}=0$ from Step 2 we obtain,  taking suitable $\alpha, \beta, \gamma$,
\begin{equation}\label{nn}R_{155}^2=\alpha a_{11}+\beta y_{11}+\gamma u_{12}=r\not=0.
\end{equation}
This implies $a_{31}=0$, thus $A_3=0$. In particular, 
$0=R_{23}=[\Lambda_2,\Lambda_3]-\Lambda(a_{24}e_3+u_{32}e_1).$
Proceeding as above we obtain 
$u_{32}a_{11}=u_{32}u_{12}=u_{32}y_{11}=0$, thus \eqref{nn} gives $u_{32}=0$.

Step 5: $A_1=0$:

First we consider $R_{16}=[\Lambda_1,\Lambda_6]+\Lambda(a_{11}e_6+v_1e_3+(y_{11}+\tr(A_6))e_1)$, which implies
$A_{16}=a_{11}A_6+(y_{11}+\tr(A_6))A_1$ since $A_3=0$.  This gives 
\[a_{11}(2a_{61}+a_{64}+y_{11})=5a_{11}y_{11}=0. 
\]
Assume that $a_{11}\not=0$. Then the latter equation implies $y_{11}=0$.
Using Steps 1 and 2, we now get 
\[
R_{165}^{2}=a_{11}y_{61}-a_{61}y_{11}+y_{11}^2=a_{11}y_{61}=0,
\]
thus $y_{61}=0$.
This together with Steps 1 and 2 gives
\[
R_{265}^2=r=a_{61}y_{21}\neq 0,\quad R_{125}^2=0=-a_{11}y_{21},
\] which is a contradiction. Hence $a_{11}=0$, thus $A_1=0$.

Step 6: $A_4=0$:

Since $A_1=A_3=0$, we have $A_{24}=a_{41}A_2$, and $A_{46}=a_{41}A_6$. This gives
\begin{equation}\label{nnn} 0=R_{242}^2=a_{41}a_{21},\quad 0=R_{462}^2=a_{41}a_{61},\quad 0=R_{463}^3=a_{41}a_{64}.\end{equation}
On the other hand, $R_{26}=[\Lambda_2,\Lambda_6]+\Lambda(a_{21}e_6+v_6e_3+a_{61}e_2+(y_{21}-u_{62})e_1)$. Since $y_{13}=0$ and $y_{11}=(2a_{61}+a_{64})/4$, we obtain, for suitable $\alpha$, $\beta$, $\gamma$,
\[0\not=r=R_{265}^2=\alpha a_{21}+\beta a_{61}+\gamma a_{64}.\]
Together with \eqref{nnn} this gives $a_{41}=0$, thus $A_4=0$.

Step 7: $a_{64}=2a_{61}\not=0$:

We already know that $a_{11}=0$ and therefore also $v_1=0$ by Step 3. This implies $R_{16}=[\Lambda_1,\Lambda_6]+(y_{11}+\tr(A_6))\Lambda_1$, hence
\[0=R_{165}^2=-(a_{61}+\tr(A_6))y_{11}+(y_{11}+\tr(A_6))y_{11}=\frac14(-2a_{61}+a_{64})y_{11}.\]
Thus in order to prove $a_{64}=2a_{61}$ it suffices to prove $y_{11}\not=0$. Assume $y_{11}=0.$ Then 
\[R_{15}=[\Lambda_1,\Lambda_5]-\Lambda(u_{12}e_6+y_{12}e_3-\tr(A_5)e_1).\]
In particular, $A_{15}=-u_{12}A_6$ because of $A_1=A_3=0$. This gives $u_{12} a_{61}=u_{12}a_{64}=0$. Moreover, 
$0\not=r=R_{155}^2=-u_{12}y_{61}$, thus $u_{12}\not=0$. Consequently, $a_{61}=a_{64}=0$.
Thus we have $A_6=0$ besides $A_1=A_3=A_4=0$, which gives
$\diag(r,0)=A_{56}= y_{61}A_2$. However, this contradicts $a_{24}=-2a_{21}$ since $r\not=0$. Furthermore, the latter consideration also shows that $A_6\not=0$,  which is equivalent to $a_{61}\neq 0$.

Step 8: $A_2=0$, $u_{12}=0$: 

Since $A_1=A_3=A_4=0$, we have
$0=A_{26}=a_{21}A_6+a_{61}A_2$, thus $a_{21}a_{61}=0$. Since $a_{61}\not=0$ this implies $a_{21}=0$ and therefore also $A_2=0$. Furthermore, we have $0=-R_{162}^1=-a_{64}u_{12}+(y_{11}+\tr(A_6))u_{12}=2a_{61}u_{12}$, which implies $u_{12}=0$. 

Step 9: final contradiction:

Since $A_1=A_2=A_3=A_4=0$ we obtain  
$\diag(r,0)=A_{56}=(a_{51}+u_{62})A_6-\tr(A_6)A_5$. Together with $a_{64}=2a_{61}$ this gives 
\begin{eqnarray*}
r&=& (-2a_{51}+u_{62})a_{61},\\
0&=& (2a_{51}-3a_{54}+2 u_{62})a_{61},
\end{eqnarray*}
hence
\begin{equation}\label{cont}
    2r=3(-2a_{51}+a_{54})a_{61}.
\end{equation}
On the other hand, $R_{15}=[\Lambda_1,\Lambda_5]-\Lambda(y_{12}e_3+y_{11}e_2-\tr(A_5)e_1)$ since $u_{12}=0$ and $A_1=0$, which also implies $v_1=0$ by Step 3. Since $A_1=0$ by Step 5 and $y_{31}=0$ by Step 2, we get
$r=R_{155}^2=-(\tr(A_5)+a_{51})y_{11}-y_{11}y_{21}+\tr(A_5)y_{11}=-y_{11}(y_{21}+a_{51}).$
Applying Step 2 and Step 7, we obtain
\[ 4r={a_{61}}(-2a_{51}+a_{54}),\]
which contradicts \eqref{cont} since $r\not=0$.
\end{proof}

\begin{co} \label{co:triang} 
Let $\fh_e$ be indecomposable and of type {\rm III}. If all $A_i$ are upper triangular or all $A_i$ are lower triangular, then $\fa=0$.
\end{co}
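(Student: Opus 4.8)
The plan is to deduce the corollary from the two exclusion results already proved, Propositions~\ref{pro:diagg} and~\ref{pr:anotdiag}, by observing that triangularity of all the connection coefficients $A_i$ pins $\fa$ inside a Borel subalgebra of $\fgl(2,\RR)$. The starting point is Remark~\ref{rem:Aspa}: since $\fa=\pro_A(\fh)$ and $\pro_A\colon\fhI\to\fgl(2,\RR)$ is a Lie algebra homomorphism (the $\fgl(2,\RR)$-component of a bracket in $\fhI\cong\fgl(2,\RR)\ltimes_\rho\fm$ is the bracket of the $\fgl(2,\RR)$-components), $\fa$ is a genuine subalgebra of $\fgl(2,\RR)$ and is contained in the Lie subalgebra generated by $A_1,\dots,A_7$.

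Suppose first that all $A_i$ are upper triangular. The upper triangular matrices form a subalgebra $\mathfrak{b}^{+}\subset\fgl(2,\RR)$, so the Lie algebra generated by the $A_i$, and hence $\fa$, is contained in $\mathfrak{b}^{+}$. The heart of the argument is to intersect this with the list furnished by Theorem~\ref{TFK}: among $0$, $\fsl(2,\RR)$, $\fgl(2,\RR)$, $\fco(2)$, $\fd$ and $\RR\cdot\diag(1,0)$, both $\fsl(2,\RR)$ and $\fgl(2,\RR)$ contain the matrix $\bigl(\begin{smallmatrix}0&0\\1&0\end{smallmatrix}\bigr)$, and $\fco(2)$ contains $\bigl(\begin{smallmatrix}0&-1\\1&0\end{smallmatrix}\bigr)$, each of which has a nonzero $(2,1)$-entry and so lies outside $\mathfrak{b}^{+}$. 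Hence the condition $\fa\subset\mathfrak{b}^{+}$ leaves only $\fa\in\{0,\fd,\RR\cdot\diag(1,0)\}$.

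To conclude, I would invoke Propositions~\ref{pro:diagg} and~\ref{pr:anotdiag}, established without any triangularity assumption, which rule out $\fa=\fd$ and $\fa=\RR\cdot\diag(1,0)$ for an indecomposable holonomy of type~III; this forces $\fa=0$. The lower triangular case is identical, replacing $\mathfrak{b}^{+}$ by the lower triangular subalgebra $\mathfrak{b}^{-}$ and noting that the same three candidates all have a nonzero $(1,2)$-entry (for instance $\bigl(\begin{smallmatrix}0&1\\0&0\end{smallmatrix}\bigr)\in\fsl(2,\RR)\cap\fgl(2,\RR)$ and the rotation generator of $\fco(2)$). Since the substantive analysis was already carried out in the preceding propositions, I expect no real obstacle here; the only point deserving care is the verification that $\pro_A$ is a homomorphism, so that the abstract classification of $\fa$ from Theorem~\ref{TFK} may legitimately be intersected with the Borel subalgebra dictated by the triangularity of the $A_i$.
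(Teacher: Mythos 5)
Your proposal is correct and follows essentially the same route as the paper's own proof: both use the fact that $\fa$ lies in the (triangular) subalgebra of $\fgl(2,\RR)$ determined by the $A_i$, intersect this with the list of possible $\fa$'s from Theorem~\ref{TFK} to reduce to $\fa\in\{0,\fd,\RR\cdot\diag(1,0)\}$, and then invoke Propositions~\ref{pro:diagg} and~\ref{pr:anotdiag} to conclude $\fa=0$. The only difference is that you spell out explicitly why $\fsl(2,\RR)$, $\fgl(2,\RR)$ and $\fco(2)$ fail to be triangular and why $\pro_A$ is a homomorphism, details the paper leaves implicit.
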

\begin{proof}   Let $\mathfrak{e}$,  $\fh$ and $\fa$ be as above. By \eqref{eq:m0} and \eqref{eq:KNhol}, $\fa$ is contained in the subalgebra of $\fgl(2,\RR)$ spanned by $A_i$, $i=1,\dots,7$.  If all $A_i$ are upper triangular or all are lower triangular, then  we have $\fh=\fa\ltimes \fm(1,0,k)$ with $\fa$ being a subalgebra of the triangular $2\times 2$-matrices. However, by Theorem \ref{TFK}, the only such subalgebras are 
$0,\,\fd,$ or \linebreak $\RR\cdot\diag(1,0)$. Since the last two cases are ruled out in Propositions \ref{pro:diagg} and \ref{pr:anotdiag}, we get $\fa=0$.
\end{proof}

\section{ Proving that ${\mathfrak a}$ is trivial}

 Let again $\fh_e\subset\fso(\fg,\ip)$ be an indecomposable holonomy algebra of a left-invariant metric and assume that $\fh_e$ is of type~{\rm III}.  As in the previous section, let $\mathfrak{e}=(e_1,\dots,e_7)$ be a basis of $\fg$ such that $\omega=\omega_0$, where $\omega_0$ is as in \eqref{Eomega} and the holonomy with respect to this basis is one of the Lie algebras $\fh=\fa\ltimes\fm(1,0,k)\subset \fhIII$ listed in Theorem~\ref{TFK}. 
\begin{lm}\label{lm:anzimpliesg3ab}
 Let $\fh_e$ be indecomposable and of type {\rm III}. If $\fa\not=0$, then 
 \begin{itemize}
     \item[(i)] $\Lambda(\fg)\subset\fhIII$, and therefore $\Lambda_i=\Lambda(e_i)=h(A_i,v_i,(y_{i1},y_{i2}))$ for some
     $$A_i=\begin{pmatrix} a_{i1} & a_{i2} \\ a_{i3}& a_{i4}\end{pmatrix}, \quad v_i\in \RR, \quad (y_{i1},y_{i2})\in \RR^2, \quad i=1, \ldots, 7.$$ 
     \vspace{-0.5cm}
     \item[(ii)] the Lie subalgebra $\fg_3$ defined in \eqref{eq:gidef} is abelian.
 \end{itemize}
\end{lm}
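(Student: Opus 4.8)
The plan is to prove parts (i) and (ii) together, since the structure theory built up in Sections~3--5 already does most of the work and the main task is to assemble it. First I would establish part (i). By Proposition~\ref{pro:educg}, if $\fa$ is one of $\fsl(2,\RR)$, $\fgl(2,\RR)$, $\fco(2)$ or $\fd$, then $\Lambda(\fg)\subset\fhIII$ directly. The remaining case in Theorem~\ref{TFK} with $\fa\neq 0$ is $\fa=\RR\cdot\diag(1,0)$, but this has been excluded by Proposition~\ref{pr:anotdiag}. Hence whenever $\fa\neq 0$ and $\fh_e$ is indecomposable of type~III, the only surviving possibilities force $\Lambda(\fg)\subset\fhIII$, giving the stated form $\Lambda_i=h(A_i,v_i,(y_{i1},y_{i2}))$ with $u_i=0$. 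This is essentially bookkeeping over the list in Theorem~\ref{TFK} combined with the two exclusion results.

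For part (ii), I would argue by contradiction: suppose $\fa\neq 0$ but $\fg_3=\Span\{e_1,e_2,e_3\}$ is \emph{not} abelian. Since part (i) gives $\Lambda(\fg)\subset\fhIII$, I can invoke Proposition~\ref{pr:Atrhdec}: if $\fg_3$ is not abelian, then either $\fh$ is decomposable or all the matrices $A_i$, $i=1,\dots,7$, are simultaneously upper triangular or all lower triangular. Decomposability is ruled out by hypothesis (we assumed $\fh_e$ indecomposable of type~III). So all $A_i$ must be triangular of the same type. But then Corollary~\ref{co:triang} applies and yields $\fa=0$, contradicting the assumption $\fa\neq 0$. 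Therefore $\fg_3$ must be abelian.

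The key logical point is that both pieces are pure deductions from results already proved: part (i) from Proposition~\ref{pro:educg} together with Propositions~\ref{pro:diagg} and~\ref{pr:anotdiag}, and part (ii) from the dichotomy of Proposition~\ref{pr:Atrhdec} closed off by Corollary~\ref{co:triang}. I expect no genuinely new computation is required here; the main subtlety to check is that the hypothesis ``$\fa\neq 0$'' together with indecomposability of type~III genuinely leaves only the cases covered by Proposition~\ref{pro:educg}(i), i.e. that $\fa=\RR\cdot\diag(1,0)$ is the unique troublesome triangular-but-nonzero case and it has indeed been eliminated. I would state this reduction explicitly so the reader sees that the list in Theorem~\ref{TFK}, minus the two excluded diagonal-type cases, forces $\Lambda(\fg)\subset\fhIII$.

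The anticipated obstacle, if any, is not in the mathematics but in making sure the application of Proposition~\ref{pr:Atrhdec} is clean: that proposition is phrased under the blanket assumption $\Lambda(\fg)\subset\fhIII$, which part (i) supplies, so the two parts must be proved in this order. One should verify there is no circularity---namely that Proposition~\ref{pr:Atrhdec} and Corollary~\ref{co:triang} do not themselves assume $\fg_3$ abelian---and indeed they do not. Thus the proof reduces to citing part~(i), feeding it into Proposition~\ref{pr:Atrhdec}, discarding the decomposable alternative, and finishing with Corollary~\ref{co:triang}.
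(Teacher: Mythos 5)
Your proposal is correct and takes essentially the same route as the paper's own proof: part (i) by using Proposition~\ref{pr:anotdiag} to exclude $\fa=\RR\cdot\diag(1,0)$ and reduce to the cases covered by Proposition~\ref{pro:educg}, and part (ii) by feeding (i) into the dichotomy of Proposition~\ref{pr:Atrhdec}, discarding decomposability, and concluding with Corollary~\ref{co:triang}. The only cosmetic difference is your additional citation of Proposition~\ref{pro:diagg}, which is not actually needed, since $\fa=\fd$ is already handled by Proposition~\ref{pro:educg}(1).
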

\begin{proof}
 If $\fa$ is non-trivial, then by Proposition~\ref{pr:anotdiag}, $\fa$ is one of the Lie algebras in item~1 of Theorem~\ref{TFK}. In this case $\Lambda(\fg)\subset\fhIII$ by Proposition~\ref{pro:educg}. This gives (i). In particular, we can apply Proposition~\ref{pr:Atrhdec}, which leads to the following conclusion. Assume that $\fg_3$ were not abelian. Then all $A_i$ are upper triangular or all $A_i$ are lower triangular. Now Corollary \ref{co:triang} implies $\fa=0$, which contradicts our assumption.
\end{proof}

\begin{lm}\label{lm:anzimpliesA4z}
Let $\fh_e$ be indecomposable and of type {\rm III}. 
If $\fa\not=0$, then $A_4=0$.
\end{lm}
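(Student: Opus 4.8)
The plan is to extract enough algebraic constraints from the hypothesis $\fa\neq0$ to force every entry of $A_4$ to vanish, using three ingredients: the reductions coming from $\fg_3$ being abelian, the representation $e_j\mapsto A_j$ of Remark~\ref{re:rep}, and the second Bianchi identity \eqref{eq:2B}.

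\textbf{Step 1 (reductions).} By Lemma~\ref{lm:anzimpliesg3ab} we have $\Lambda(\fg)\subset\fhIII$ and $\fg_3$ abelian. Writing $[e_1,e_2]=[e_1,e_3]=[e_2,e_3]=0$ via $[x,y]=\Lambda(x)y-\Lambda(y)x$ and the explicit columns of \eqref{eq:havuy} immediately gives $A_1=0$, $\tr(A_2)=\tr(A_3)=0$, $a_{22}=a_{31}$ and $a_{24}=a_{33}$. Since $2,3\le5$, Remark~\ref{re:rep} applies and $[e_2,e_3]=0$ yields $[A_2,A_3]=0$; two commuting trace-free $2\times2$ matrices are linearly dependent, so $\dim\Span\{A_2,A_3\}\le1$. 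As $\tr(A_1)=0$, Lemma~\ref{lm:v1} gives $v_1=0$, whence $[e_1,e_4]=-\tr(A_4)e_1$.

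\textbf{Step 2 (action of $A_4$ on the curvature generators).} Translating $[e_2,e_4]$ and $[e_3,e_4]$ through Remark~\ref{re:rep} yields $[A_2,A_4]=-a_{41}A_2-a_{43}A_3$ and $[A_3,A_4]=-a_{42}A_2-a_{44}A_3$. The engine is the second Bianchi identity. Since $A_{ij}=0$ for $i,j\le5$ and, by Table~\ref{tab:t1}, the components $A_{i6},A_{i7}$ with $i\le4$ vanish (the only curvature terms with a non-trivial $\fgl(2,\RR)$-part being $A_{56},A_{57}$ and $A_{67}$), projecting $B_{456}=0$ and $B_{457}=0$ onto $\fa$ collapses to
\begin{align*}
[A_4,A_{56}]&=-(\tr(A_4)+a_{41})A_{56}-a_{42}A_{57},\\
[A_4,A_{57}]&=-a_{43}A_{56}-(\tr(A_4)+a_{44})A_{57}.
\end{align*}
Thus $W:=\Span\{A_{56},A_{57}\}$ is $\ad(A_4)$-invariant, and in the basis $(A_{56},A_{57})$ the restriction of $\ad(A_4)$ to $W$ has trace $-3\tr(A_4)$ and determinant $2(\tr(A_4))^2+\det(A_4)$.

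\textbf{Step 3 (conclusion and obstacle).} When $\fa$ is two- or three-dimensional the generators $A_{56},A_{57}$ are independent, so $W$ is a genuine $2$-dimensional $\ad(A_4)$-invariant subspace of $\fgl(2,\RR)$. The eigenvalues of $\ad(A_4)$ on $\fgl(2,\RR)$ are $0,0,\pm\delta$ with $\delta^2=(\tr(A_4))^2-4\det(A_4)$, so the trace and determinant computed above must be the sum and product of two of these; comparing with $-3\tr(A_4)$ and $2(\tr(A_4))^2+\det(A_4)$ forces either $A_4$ nilpotent or $\det(A_4)=-2(\tr(A_4))^2$. I then intersect this with the restriction on $(\tr(A_4),\det(A_4))$ imposed by indecomposability through Lemma~\ref{lm:A1} (for $r=4$), and feed the outcome back into the remaining Bianchi relations \eqref{eq:1567}, \eqref{eq:y167} together with the relations for $[A_2,A_4]$ and $[A_3,A_4]$, in order to rule out the non-zero possibilities and conclude $A_4=0$. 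I expect the main obstacle to be the degenerate configurations: when $\Span\{A_2,A_3\}=0$ the relations of Step~1 carry no information on the entries of $A_4$, and when $\fa=\fco(2)$ the generators lie on an abelian line so the eigenvalue dichotomy degenerates. These cases must be closed separately, extracting the vanishing of $A_4$ from further components of the Jacobi and second Bianchi identities and, where needed, from the decomposability criteria of Lemmas~\ref{lm:Rdec} and \ref{lm:A1}, which are unavailable to a holonomy algebra that is assumed indecomposable.
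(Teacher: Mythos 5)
Your Step 1 is fine and matches the paper's reductions, but Step 2 contains a fatal computational error. You claim that the only curvature components with non-trivial $\fgl(2,\RR)$-part are $A_{56}$, $A_{57}$ and $A_{67}$, and in particular that $A_{i6},A_{i7}$ vanish for $i\le 4$. Table~\ref{tab:t1} says the opposite: $A_{67}=0$, while $A_{26}$, $A_{27}$, $A_{36}$ and $A_{37}=-A_{26}$ are generically \emph{non-zero} (trace-free) matrices. Consequently the projection of $B_{456}=0$ onto the $\fgl(2,\RR)$-part does not collapse to your relation: since $\Lambda_4e_5=y_{41}e_2+y_{42}e_3+\sqrt2 v_4e_4-\tr(A_4)e_5$ and $\Lambda_4e_6=-y_{41}e_1-v_4e_3-a_{41}e_6-a_{42}e_7$ by \eqref{eq:havuy}, the correct identity is
\begin{equation*}
[A_4,A_{56}]=y_{41}A_{26}+y_{42}A_{36}-(\tr(A_4)+a_{41})A_{56}-a_{42}A_{57},
\end{equation*}
and similarly $B_{457}$ produces the extra terms $y_{41}A_{27}+y_{42}A_{37}$. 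Nothing at this stage forces $y_{41},y_{42}$ or $A_{26},A_{27},A_{36}$ to vanish, so the $\ad(A_4)$-invariance of $W=\Span\{A_{56},A_{57}\}$ is unproven, and with it the entire eigenvalue dichotomy of Step 3. Note that the \emph{trace} of your relations is valid — the omitted terms are trace-free — and that trace identity is exactly \eqref{eq:1567}, i.e.\ the content of Lemma~\ref{lm:A1}; the paper works at the level of traces precisely for this reason, and your attempt to upgrade it to a matrix identity is where the argument breaks.

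Even granting Step 2, Step 3 would not close the proof. The dichotomy you extract ($A_4$ nilpotent, or $\det(A_4)=-2(\tr(A_4))^2$) is precisely the configuration $2(\tr(A_4))^2+\det(A_4)=0$ that Lemma~\ref{lm:A1} is structurally unable to exclude, so ``intersecting'' the two yields no contradiction, and the remaining plan (``feed the outcome back into the remaining Bianchi relations'') together with the cases you explicitly leave open ($A_2=A_3=0$, $\fa=\fco(2)$) is exactly where all of the work lies. The paper's actual route is different and concrete: Lemma~\ref{lm:A1} (applied with $r=4$, using indecomposability) forces $A_4$ to have two real eigenvalues, either both zero or of opposite signs; after a change of basis $\Phi_T$ as in \eqref{eq:tgl} one may assume $A_4=\diag(r,s)$ with $rs<0$ or $A_4$ equal to the nilpotent Jordan block; in each normal form a handful of Jacobi identities forces \emph{all} $A_i$ to be upper triangular, which contradicts Corollary~\ref{co:triang} because $\fa\neq 0$. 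To repair your proposal you would either have to prove that the extra trace-free curvature terms vanish in this setting (they do not in general) or abandon the invariant-subspace argument and carry out a case analysis of the type the paper does.
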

\begin{proof}
Assume that $\fa\neq 0$.
Then Lemma \ref{lm:anzimpliesg3ab} implies that $\Lambda(\fg)\subset \fhIII$ and that $\fg_3$ is abelian.
Together with Eqns.~\eqref{c12}, \eqref{c13} and \eqref{c23} this gives
$$A_1 = 0,\ a_{21}=-a_{33},\ a_{22}=a_{31},\ a_{24}=a_{33},\ \tr(A_3) = 0.$$
Since $\tr(A_1)=0$, from  Lemma~\ref{lm:v1}, we obtain $v_1=0$. 

For a contradiction, suppose  that $A_4\neq 0$. We may apply Lemma~\ref{lm:A1} for $r=4$: If $\det(A_4)>0$ or if $\det(A_4)=0$ and $\tr(A_4)\not=0$, then $\fh$ would be decomposable. Thus $\det (A_4) < 0$ or $\det (A_4)=\tr(A_4)=0$. Then $A_4$ has two real eigenvalues that are either both equal to zero or both unequal to zero with opposite signs. By choosing appropriate basis vectors $e_2$ and $e_3$, we may assume that $A_4$ is given in normal form. 

Case 1: $A_4 = \begin{pmatrix}
    r & 0\\
    0 & s 
\end{pmatrix}$,  $rs<0$

Analyzing the Jacobi equations, we get that $J_{235}=0$ implies $v_2=v_3=0$. From $J_{246}^6=-ra_{33}$ and $J_{246}^3=-v_4a_{33}-a_{63}$ we obtain $a_{33}=a_{63}=0$. Similarly, $J_{247}^6=(2r-s)a_{23}$ and $J_{247}^3=v_4a_{23}+(r-2s)a_{73}$ imply $a_{23}=a_{73}=0$. Furthermore, $J_{245}^3=2s(y_{22}-a_{53})$ yields $a_{53}=y_{22}$. Using $a_{23}=a_{33}=0$ we obtain $J_{247}^1=-2s y_{22}$, which finally gives $a_{53}=0$. Thus  all $A_j$ are upper triangular, which contradicts Corollary~\ref{co:triang}.

 Case 2: $A_4 =\begin{pmatrix}
    0 & 1\\
    0 & 0 
\end{pmatrix}$

Here, we obtain  from $J_{235}$ that $v_2=0$. That $a_{23}=a_{33}=a_{73}=0$ results from $J_{246}^3=J_{246}^6=J_{246}^7=0$. From $J_{236}^7$ we obtain $a_{31}=0$ and from $J_{457}^7$ we conclude $a_{53}=0$.  Finally, $-2a_{63}=2J_{467}^7+J_{456}^5-J_{347}^2=0$. This shows  that all $A_i$ are upper triangular matrices, which contradicts Corollary~\ref{co:triang}.
\end{proof}

\begin{pr} \label{pro:anzimpliesred} 
If $\fh_e$ is indecomposable and of type {\rm III}, then $\fa =0$.
\end{pr}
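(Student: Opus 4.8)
The plan is to argue by contradiction: assume $\fa \neq 0$ and derive that all the matrices $A_i$ can be made simultaneously upper (or lower) triangular, which forces $\fa = 0$ by Corollary~\ref{co:triang}, a contradiction. By Lemmas~\ref{lm:anzimpliesg3ab} and~\ref{lm:anzimpliesA4z}, the assumption $\fa\neq 0$ already gives us three strong structural facts to work with: first, $\Lambda(\fg)\subset\fhIII$, so every $\Lambda_i = h(A_i,v_i,y_i)$ and the bracket relations of Table~\ref{A1.h3} apply; second, the subalgebra $\fg_3 = \Span\{e_1,e_2,e_3\}$ is abelian; and third, $A_4 = 0$. Combining the abelianness of $\fg_3$ with \eqref{c12}--\eqref{c23} (as already recorded in the proof of Lemma~\ref{lm:anzimpliesA4z}) yields $A_1 = 0$, $v_1 = 0$, $\tr(A_3)=0$, and the relations $a_{21}=-a_{33}$, $a_{22}=a_{31}$, $a_{24}=a_{33}$ among the entries of $A_2$ and $A_3$.

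From here I would exploit the representation structure from Remark~\ref{re:rep}: since $A_{ij}=0$ for $i,j\le 5$, the map $e_j\mapsto A_j$ is a representation of $\fg_5$, and with $A_1=A_4=0$ the brackets $[A_2,A_3]$, $[A_2,A_5]$, $[A_3,A_5]$ must equal the appropriate combinations $\sum_k c_{23}^k A_k$, etc. The key is that $\fa$ is spanned (Remark~\ref{rem:Aspa}) by the $A_{ij}$ and their brackets, and since $\fa\neq 0$ lies in $\fsl(2,\RR)$ (as $\tr$ vanishes on all the relevant $A_i$ — note $A_1=A_4=0$ and $\tr(A_3)=0$), the nonzero part of $\fa$ must come from $A_2$, $A_3$, $A_5$ and their interactions with the higher-index $A_6$, $A_7$. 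The strategy is then a careful case analysis on the possible normal forms of $A_2$ (and $A_3$), driven by the Jacobi identities $J_{ijk}^l=0$ and the vanishing of the curvature components $R_{ij}$ for $i,j\le 5$ together with the prescribed forms of $R_{56}$, $R_{57}$, $R_{67}$ from Table~\ref{tab:t1}. In each case the goal is to show all the off-diagonal entries $a_{i3}$ (or $a_{i2}$) vanish for every $i$, i.e. that the $A_i$ are simultaneously triangular.

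I would organize the case split around whether $A_2=0$ or $A_2\neq 0$, and within the latter on the eigenvalue type (real distinct, nilpotent, or complex) of $A_2$, putting $A_2$ into normal form by a basis change $\Phi_T$ of \eqref{eq:tgl} preserving $\Span\{e_2,e_3\}$. As in Lemma~\ref{lm:anzimpliesA4z}, the decomposability criterion Lemma~\ref{lm:A1} lets me immediately discard the cases where $A_2$ (or whichever matrix governs the relevant action) has $\det>0$ or is non-nilpotently trace-zero, leaving only nilpotent or split-real normal forms. For each surviving normal form, systematically feeding the Jacobi identities $J_{ijk}=0$ and the Bianchi-type curvature constraints of \eqref{eq:2B} propagates the triangularity from $A_1,\dots,A_5$ up to $A_6$ and $A_7$, exactly as in the earlier propositions. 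Once all $A_i$ are triangular, Corollary~\ref{co:triang} forces $\fa=0$, contradicting $\fa\neq 0$.

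The main obstacle I anticipate is the bookkeeping in the case $A_2\neq 0$: because $A_4=0$ has already been secured, the representation constraints from $\fg_5$ are weaker than in Lemma~\ref{lm:anzimpliesA4z}, so more of the work must be carried by the Jacobi identities involving $e_6$ and $e_7$ and by the explicit curvature entries. Verifying that the chosen Jacobi components and curvature components genuinely kill every $a_{i3}$ (and do not merely reduce the problem) is the delicate part, and it is where one must be sure that no subcase secretly allows a non-triangular, trace-free $A_i$ to survive and contribute a nonzero element of $\fa$; ruling those out will likely require invoking indecomposability through Lemma~\ref{lm:Rdec} or Remark~\ref{rem:vi} to discard the degenerate branches where the $v_i$ or the relevant $y$-components all vanish.
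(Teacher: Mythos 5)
Your overall architecture matches the paper's: argue by contradiction from $\fa\neq 0$, invoke Lemmas~\ref{lm:anzimpliesg3ab} and~\ref{lm:anzimpliesA4z} to get $\Lambda(\fg)\subset\fhIII$, $\fg_3$ abelian and $A_4=0$, deduce $A_1=0$, $v_1=0$ and the trace relations, and then split into cases according to whether $A_2,A_3$ vanish. But there are concrete gaps. First, your parenthetical claim that $\fa\subset\fsl(2,\RR)$ because ``$\tr$ vanishes on all the relevant $A_i$'' is unjustified: $\fa$ is spanned by the curvature projections $A_{ij}$, not by the $A_i$, and by Table~\ref{tab:t1} the traces $\tr(A_{56})$, $\tr(A_{57})$ are precisely the two components of $\pro_y(R_{15})$; they vanish only when $R_{15}=0$, which the paper obtains only in the sub-case $y_{11}=y_{12}=0$ (where $\Lambda_1=0$). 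Second, Lemma~\ref{lm:A1} cannot prune the eigenvalue types of $A_2$: it applies only to $A_r$ with $r\in\{1,4\}$, and here $A_1=A_4=0$ makes it vacuous. In fact no eigenvalue analysis of $A_2$ is needed: $[A_2,A_3]=0$ with both matrices trace-free forces $\dim\Span\{A_2,A_3\}\le 1$, and after a $\Phi_T$-change achieving $A_3=0$, the abelian relations $a_{21}=-a_{33}$, $a_{22}=a_{31}$, $a_{24}=a_{33}$ force $A_2$ to be strictly lower triangular; this is how the paper gets its normal form, and your proposed real-distinct/complex branches simply do not occur.

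The most serious gap is your endgame. You want every case to terminate in ``all $A_i$ triangular, hence $\fa=0$ by Corollary~\ref{co:triang}'', but in the case $A_2=A_3=0$ this is not what happens and, in the fixed basis, cannot be forced: in the paper's sub-case $v_4=0$, $a_{71}\neq 0$, $a_{53}\neq 0$, the Jacobi and curvature constraints do not triangularize $A_5$; instead one derives the linear relation $a_{53}A_6-a_{51}A_7=0$, so that $A_5,A_6,A_7$ span a Lie algebra of dimension at most $2$ containing $\fa$. The conclusion $\fa=0$ then comes from a different mechanism: since $R_{15}=0$ gives $\fa\subset\fsl(2,\RR)$, Theorem~\ref{TFK} together with Propositions~\ref{pro:diagg} and~\ref{pr:anotdiag} leaves only $\fa\in\{0,\fsl(2,\RR)\}$, and a $3$-dimensional $\fa$ cannot sit inside a $\le 2$-dimensional algebra. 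This dimension-counting-plus-classification step (used again when $v_4\neq 0$, where one shows $[A_6,A_7]=0$ and hence $A_6,A_7$ are linearly dependent) is absent from your plan, and neither Lemma~\ref{lm:Rdec} nor Remark~\ref{rem:vi} --- your proposed fallbacks --- can substitute for it, since they only eliminate branches where the curvature degenerates. Without this ingredient your case analysis cannot close.
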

\begin{proof}
Assume for a contradiction that $\fa\neq 0$. Then $\Lambda(\fg) \subset \fhIII$, $\fg_3$ is abelian and $A_4=0$ by Lemmas \ref{lm:anzimpliesg3ab} and \ref{lm:anzimpliesA4z}.

Since $\fg_3$ is abelian, \eqref{c12}, \eqref{c13} and \eqref{c23} imply  $A_1=0$, $\tr(A_2)=\tr(A_3)=0$ and $a_{21}=-a_{33}$, $a_{22}=-a_{34}$. In addition,  $v_1=0$  by Lemma~\ref{lm:v1}. 

By Remark~\ref{re:rep} and the fact that $\fg_3$ is abelian, we get $[A_2,A_3]=0$.
Therefore, $\Span\{A_2,A_3\}$ is an abelian subalgebra of  $\fsl(2,\RR)$, thus of dimension~$\le 1$.

By Remark~\ref{rem:Aspa}, the subalgebra $\fa$ of the holonomy algebra $\fh$ is contained in the Lie algebra spanned by $A_i$, $i=2,3,5,6,7$ (since $A_1=A_4=0$). We shall prove that these matrices span a Lie algebra of dimension $\leq 1$, from which we obtain $\fa=0$ by Theorem \ref{TFK} and Proposition~\ref{pr:anotdiag}. We shall split the proof in cases.

{\bf Case 1: }$A_2\not=0$ or $A_3\not=0$. Then there exists a non-trivial linear combination $\lambda A_2+\mu A_3=0$.  After a change of basis in $\fg$ by a transformation $\Phi_T$ for $$T:=\begin{pmatrix}
\mu&\lambda\\
-\lambda &\mu
\end{pmatrix}
$$ as defined in \eqref{eq:tgl} we have $A_2\neq 0$ and $A_3=0$. Since $a_{21}=-a_{33}$, $a_{22}=-a_{34}$ and $\tr(A_2)=0$, as shown above, we obtain 
$$A_2=\begin{pmatrix}
0&0\\
a_{23} &0
\end{pmatrix}
$$ with $a_{23}\neq 0$.

Recall the notation $A_{ij}=\pro_A(R_{ij})$, for $i,j=1, \ldots, 7$.  From Table \ref{tab:t1}, \eqref{eq:Aij} and Eqns.~\eqref{c15}, \eqref{c45} we get 
 $$A_{15}=-y_{11}A_2=0,\quad A_{45}=-y_{41}A_2=0,$$ thus $y_{11}=y_{41}=0$.
By Table \ref{tab:t1} and \eqref{c47}, we obtain
$A_{47} =- v_4A_2=0,
$ so $v_4=0$. Moreover, by Table \ref{tab:t1} and \eqref{c27},
 $$
[A_2,A_7]+a_{23} A_6-(v_2-a_{71}) A_2=A_{27} \in \fsl(2,\RR),
$$ which
implies $\tr(A_6)=0$. By 
$A_{25} = [A_2,A_5]+(a_{51}-y_{21})A_2=0$
we get that $[A_5,A_2]$ is a multiple of  $A_2$, so strictly lower triangular. This implies $a_{52}=0$, since $a_{23}\neq 0$.
Using the table again one gets 
 \begin{equation}\label{eq:R2637}
A_{37}=(a_{72}-v_3) A_2=  -A_{26}
 = -[A_2,A_6]-a_{61}A_2.
\end{equation} 
Hence $[A_2,A_6]$ is a multiple of $A_2$ and therefore strictly lower diagonal. This implies $a_{23}a_{62}=0$, thus $a_{62}=0$ and $[A_2,A_6]=2a_{61}A_2$. Equation~\eqref{eq:R2637} now gives $(a_{72}-v_3)A_2=-3a_{61}A_2$, so $v_3-a_{72}=3a_{61}$.
However, $J_{237}^3=0$ gives $v_3=a_{61}$ so we conclude $a_{72}=-2a_{61}$. Using this and Table \ref{tab:t1}, we get 
$$
A_{67}=[A_6,A_7]+a_{61}A_7+(a_{63}-a_{71})A_6-v_6A_2
=0,
$$
As $A_6$, $A_2$ are lower triangular, the upper right corner of the matrix in the last equation gives $a_{61}=0$. Therefore, $A_2,A_5,A_6,A_7$ are all lower triangular. By Corollary~\ref{co:triang}, the holonomy $\fh$  would be decomposable,  which is a contradiction.

{\bf Case 2:} $A_2=A_3=0$.

 We may assume that $y_{11}\in\{0,1\}$ and $y_{12}=0$. Indeed, if one of the numbers is different from zero, we may assume that $y_{11}$ is this number by possibly interchanging $e_2$, $e_3$ and $e_6$, $e_7$ using $\Phi_T$ for $T=\begin{pmatrix} 0&1\\1&0\end{pmatrix}$ (see \eqref{eq:tgl}). After that, a base change by $\Phi_T$  for $T=\begin{pmatrix}
    y_{11}&0\\y_{12}&\frac1{y_{11}}
\end{pmatrix}$  yields $y_{11}=1$. 

We now consider the cases $y_{11}=1$ and $y_{11}=0$ separately. Recall that $\fa$ is generated by $A_5,A_6,A_7$, since $A_i=0$ for $i=1, \ldots, 4$.
\begin{enumerate}
\item $y_{11}=1$. By $J_{245}^2=\sqrt2 v_2$ and $J_{345}^2=\sqrt2 v_3$, we get $v_2=v_3=0$. 
From $J_{15i}^j=0$ for $i=6,7$, $j=2,3$ we get
$a_{61}=1$, $a_{63}=a_{71}=a_{73}=0.$ In particular, $A_6,A_7$ are both upper triangular. If $a_{53}=0$, then $A_i$ is upper triangular for all $i=1,\dots,7$, thus $\fh$ is decomposable or $\fa=0$ by Corollary~\ref{co:triang}. So we assume $a_{53}\neq 0$.
If $a_{74}\not=0$, then 
$J_{167}^1=(a_{72}-a_{64})a_{74}$, $ J_{257}^3=2(a_{53}-y_{22})a_{74}$ would imply $a_{64}=a_{72}$ and $y_{22}=a_{53}$, which would lead to the contradiction  $0=J_{157}^1-J_{567}^6=-2 a_{53}\not=0$. Hence $a_{74}=0$. Now 
$$J_{567}^6=(a_{72}-2a_{64}-1)a_{53},\quad J_{367}^2=a_{72}^2-a_{72}=0$$
implies that either $a_{72}=0$, $a_{64}=-1/2$
or $a_{72}=1$, $a_{64}=0$. Both cases lead to a contradiction. Indeed, in the first case we obtain $0=J_{357}^3=a_{53}/2\not=0$ and in the second one $0=J_{157}^1+J_{357}^3=-3a_{53}\not=0$.

\item $y_{11}=y_{12}=0$ so that $\Lambda_1=0$ and thus $R_{15}=0$. 
By Table~\ref{tab:t1}, $\tr(A_{56})=\tr(A_{57})=0$ and thus $\fa\subset \mathfrak{sl}(2,\RR)$. By Theorem \ref{TFK}, $\fa$ is either $\mathfrak{sl}(2,\RR)$  or $\fa=0$.

\begin{enumerate}
\item $v_4=0$. If $y_{41}=y_{42}=0$, then $\Lambda_4=0$ and  Lemma \ref{lm:Rdec}, implies that $\fh$ is decomposable.  So either $y_{41}$ or $y_{42}$ is non-zero. Similarly to the discussion for $(y_{11},y_{12})$ we see that  we can assume $y_{41}=1$ and $y_{42}=0$ from now on. One has $\pro_v (R_{45})=-v_2$. Thus $v_2=0$ by Table~\ref{tab:t1}. From $J_{456}^3=J_{457}^3=0$, we obtain $a_{63}=a_{73}=0$, thus $A_6$, $A_7$ are upper triangular. If $a_{53}$ vanishes, then also $A_5$ is upper triangular and we are done by Corollary~\ref{co:triang}. So we assume $a_{53}\neq 0$. 
The identies $J_{456}^2=J_{457}^2=0$ yield $a_{64}=-2a_{61}$ and $a_{74}=-2a_{71}$. 

If $a_{71}=0$, then also $a_{74}=0$. Hence $J_{157}^1=-a_{53}(a_{61}+a_{64})$, 
which implies $a_{61}+a_{64}=0$. Thus $a_{61}=a_{64}=0$. Now $J_{567}^6=0$ yields $a_{72}=0$.  Moreover, $J_{346}^1=0$ gives $v_3=0$, which implies $J_{367}^1+ J_{256}^2=a_{53}a_{62}$, hence $a_{62}=0$.  We obtain $A_6=A_7=0$. Consequently, $\fa\subset \RR\cdot A_5$, which gives $\fa=0$.

If $a_{71}\not=0$, then $J_{347}^1=-\sqrt2 v_3 a_{71}$ implies $v_3=0$. Moreover, $J_{367}^3=-2a_{71}(a_{72}+3a_{61})$ gives $a_{72}=-3a_{61}$. Using the condition $J_{567}^6=0$ we now obtain $\tr(A_5)=0$. From $J_{156}^1=J_{157}^1=0$ we get
\begin{equation}\label{eq:32a}
a_{61} a_{51}+a_{52}a_{71}=0,\quad a_{53}a_{61}-a_{71}a_{51}=0.  
\end{equation}
After an explicit computation, this implies $[A_5,A_7]=3a_{71}A_5$.
Multiplying $J_{367}^2=4(3a_{61}^2+a_{62}a_{71})$ by $a_{53}/a_{71}$ and using \eqref{eq:32a} we obtain 
\begin{equation}\label{eq:32a1}
a_{53}a_{62}+3a_{51}a_{ 61}=0.
\end{equation}
 Eq. \eqref{eq:32a} together with \eqref{eq:32a1} imply that the non-trivial linear combination $a_{53}A_6 - a_{51}A_7$ vanishes.
So in this case  $A_5,A_6,A_7$ span a 2-dimensional Lie subalgebra of $\mathfrak{gl}(2,\RR)$ containing $\fa$. As mentioned above, the only possibilities for $\fa$ are $\fsl
(2,\RR)$ or $\fa=0$. Therefore $\fa=0$.

\item $v_4\neq 0$. Here $J_{467}^2=J_{467}^3=0$ gives $v_2=-\tr(A_7)$ and $v_3=\tr( A_6)$. Then $J_{156}^1=J_{157}^1=J_{256}^4=J_{257}^4=J_{346}^1=J_{347}^1=0$ shows that $(-\tr(A_7),\tr(A_6))$ is in the common kernel of $A_5^\top$, $A_6^\top$ and $A_7^\top$. If this vector is non-zero, then there exists a basis such that $A_5,A_6,A_7$ are all lower triangular and the proposition follows from  Corollary~\ref{co:triang}. 

Let us assume now that $\tr(A_7)=\tr(A_6)=0$. Then $v_2=v_3=0$. Using this we conclude from $\pro_v(R_{45})=0$ that $\tr(A_5)=-\sqrt 2 v_4$. 

From $J_{567}^4=J_{567}^7=J_{567}^6=0$ we have
\begin{eqnarray}
-(a_{63}-a_{71})v_6+(a_{61}+a_{72})v_7&=&0, \label{eq:a}\\
(a_{63}-a_{71})a_{52}+(a_{61}+a_{72})a_{51}&=&0,\\
(a_{63}-a_{71})a_{54}+(a_{61}+a_{72})a_{53}&=&0. \label{eq:b}
\end{eqnarray}
Recall that $R_{15}=0$. Since $\fh$ is indecomposable, we get $w_1\not=0$ or $w_2\not=0$ by Lemma~\ref{lm:Rdec}. On the other hand, $w_1=\pro_v (R_{56})=a_{51}v_6+ a_{52}v_7$ and $w_2=\pro_v (R_{57})=a_{53}v_6+ a_{54}v_7$. Thus  the system of linear equations \eqref{eq:a} -- \eqref{eq:b} for $a_{63}-a_{71}$ and $a_{61}+a_{72}$ has only the trivial solution. Hence
\begin{equation} a_{63}=a_{71}, \quad a_{61}=-a_{72}.\label{eq:bii}\end{equation}
 This implies $0=A_{67}=[A_6,A_7]-\pro_A(\Lambda([e_6,e_7]))=[A_6,A_7]$. Since $A_6,A_7\in\fsl(2,\RR)$, we see that $A_6$ and $A_7$ are linearly dependent. Applying a suitable transformation $\Phi_T$, we may assume $A_7=0$. Equation~\eqref{eq:bii} implies 
\[A_6=\begin{pmatrix} 0&a_{62}\\ 0&0\end{pmatrix}.\]
Moreover, $A_7=0$ implies $\pro_{y}R_{37}=0$ and therefore $b_4=0$. 
We compute $A_{57}=a_{53}A_6$, which gives $0=b_4=a_{62}a_{53}$. If $a_{62}=0$, then the Lie algebra generated by $A_5,A_6,A_7$ is at most one-dimensional, thus $\fa=0$. If $a_{53}=0$, then all $A_j$, $j=1,\dots,7$, are upper triangular, hence $\fa=0$ also in this case. 
\end{enumerate}
\end{enumerate}
\end{proof}

\section{Examples and proof of Theorem~\ref{th:main}}
\label{S7}
In this section we first present examples of Lie groups endowed with parallel $\G$-structures whose holonomies are the abelian Lie algebras appearing in Theorem \ref{th:main}. Later we conclude with the detailed proof of this main theorem, using the results in the previous sections.

Let us consider the one-parameter family of  Lie algebras  $\fg_\eps$ defined by 
\begin{eqnarray}
  &&\textstyle  [e_1, e_5] = \frac12 e_1,\ [e_2, e_5] = -\frac12 e_2,\ [e_2,e_6]=-\frac12 e_1,\ [e_3, e_5] = 2 e_3, \nonumber\\
  &&\textstyle  [e_3, e_7] = -\frac12 e_1,\ [e_4, e_5] = - e_2,\  [e_4, e_6] = -e_1, \label{eq:geps}\\
   && \textstyle [e_5, e_6] = -\eps e_2-2 e_4-e_6,\ [e_5, e_7] = -\eps e_3+\frac32 e_7,\ [e_6, e_7]= -\sqrt2 e_2,\nonumber
\end{eqnarray}
for $\varepsilon \in \mathbb R$. 

Obviously,  $\fp:=\Span\{e_1,\dots, e_4\}$ is an abelian ideal of  $\fg_\eps$. Thus  $\fg_\eps$ is an extension of  $\fs:= \fg_\eps/\fp$ by  $\fp$. The Lie algebra  $\fs$ is isomorphic to $\{ [X,Y]=-Y, [X,Z]=\frac32 Z\}$. The Lie algebra $\fg_\eps$ can also be understood as a semi-direct product of the ideal $\fn$ spanned by $e_j$, $j\not=5$, by $\RR$. Let $G_\eps$ be the simply connected Lie group with Lie algebra  $\fg_\eps$ and $N$ the nilpotent subgroup with Lie algebra $\fn$. Then  $G_\eps={\RR} \ltimes N$. We write elements of $G_\eps$ as $(t,n)$ according to this decomposition.  Doing so, the left-invariant vector field $e_5$ is equal to $\partial_t$.  

 Let $\fg_\eps$ denote the Lie algebra defined by \eqref{eq:geps} endowed with the inner product \eqref{Eip} induced by the $\G$-structure $\omega_0$ in \eqref{Eomega}.  Given two parameters $\varepsilon$ and $\varepsilon'$, the Lie algebras $\fg_\eps$ and $\fg_{\eps'}$ are isomorphic. For instance, an isomorphism is given by $\varphi: \fg_\eps \longrightarrow \fg_{\eps'}$ such that $\varphi(e_i)=e_i$, for $i=1, \ldots, 5$, and $\varphi(e_6)=e_6+\tfrac{2}{3}(\eps'-\eps)e_2$,  $\varphi(e_7)=e_7+\tfrac{2}{7}(\eps-\eps')e_3$.
 We will prove that $\fg_\eps$ is isometrically isomorphic to $\fg_{\eps'}$ if and only if $\eps=\eps'$ by showing that the parameter $\eps$ is determined by the Lie algebra structure and the inner product. In the following we will choose vectors $f_4,f_5,f_6,f_7$ only using the Lie algebra structure and the inner product. After choosing, each of these vectors will be described with respect to the basis used in~\eqref{eq:geps}. The center $\fz$ of $\fg_\eps$ is one-dimensional and non-degenerate. We choose $f_4\in\fz$ such that $\langle f_4,f_4\rangle=-1$. Then $f_4=\pm(e_4-2e_2)$. There exists exactly one further one-dimensional subspace $\mathfrak{u}$ which is invariant under the adjoint representation, namely $\mathfrak{u}=\RR\cdot e_1$.  We choose an element $f_5$ which acts by multiplication with $-1/2$ on $\mathfrak{u}$. Then $f_5=e_5+f_5'$, where $f_5'\in\Span\{e_1,\dots,e_4,e_6,e_7\}$. The action of $f_5$ on $\mathfrak{q}:=[\fg_\eps,\fg_\eps]/\fg_\eps^{(2)}=\Span\{e_1,e_2,e_3,2e_4+e_6,e_7\}/\Span\{e_1,e_2\}$ is independent of  the choice of $f_5$. It is diagonalizable with eigenvalues $-2$, $-1$ and $3/2$. We denote the corresponding eigenspaces by $E_{-2}$, $E_{-1}$ and $E_{3/2}$. Choose a representative of an eigenvector in $E_{-1}$ such that $|\langle f_6,f_4\rangle|=4$. Then $f_6=\pm(e_6+2e_4)+a_6e_1+b_6e_2$ for some $a_6,b_6\in\RR$. Note that the inner product on $\fg_\eps$ induces a non-degenerate inner product on $\mathfrak{q}$, and let $f_7$ be a representative of a non-zero vector in an isotropic complement of $E_{-2}$ in $E_{-1}^\perp\subset\mathfrak{q}$.  We `normalize' $f_7$ by requiring that $\langle [f_6,f_7],f_6\rangle=-\sqrt 2$, which yields $f_7=e_7+a_7 e_1+ b_7 e_2$ for some $a_7,b_7\in \RR$. Then $\langle [f_5,f_7],f_7\rangle$ depends only on $\lb$ and $\ip$, but not on the choices we made. Our assertion now follows from 
 $\langle [f_5,f_7],f_7\rangle=-\eps.$

Consider the left-invariant metric on $G_\eps$ defined by \eqref{Eip}. The linear maps $\Lambda_i$ belong to $\fhIII$. We have $\Lambda_1=0$ and
\begin{eqnarray}
&& \Lambda_2=h(0,0,y_2),\quad \ y_2=(1/2,0),   \label{eq:l1} \\
&& \Lambda_3=h(0,0,y_3),\quad \ y_3=(0,1/2),   \\
&& \Lambda_4=h(0,0,y_4),\quad \ y_4=(-1,0),     \\
&& \Lambda_5=h(A_5,0,0),\quad A_5=\diag( 1,-3/2),   \\
&& \Lambda_6=h(0,v_6,y_6),\quad  v_6=\sqrt2,\ y_6=(\eps,0),\\
&& \Lambda_7=h(0,0,y_7),\quad \ y_7=(0,\eps).   \label{eq:l5}
\end{eqnarray}
The holonomy of our metric is equal to $\fm(1,0,2)$ for $\eps\not=0$ and to $\fm(1,0,1)$ for $\eps=0$. 
The vectors $e_1, e_2, e_3\in  T_e G_\eps$ are invariants of the holonomy representation. The parallel vector fields corresponding to these vectors are not left-invariant. In terms of the left-invariant vector fields $e_1,e_2,e_3$, they are equal to
 $e^{\frac12t} e_1$, $e^{-t} e_2$, and $e^{\frac32 t} e_3$, respectively. Indeed, e.g.
 $$\textstyle\nabla_{e_5} (e^{\frac12t} e_1)=\partial_t(e^{\frac12t}) e_1+e^{\frac12t}\Lambda_5(e_1)=\frac12 e^{\frac12t} e_1-e^{\frac12t} \cdot\frac12 e_1=0.$$ 
 
We want to deform $\omega_0$ by a pointwise orthonormal transformation $A$ depending on $t$. In the end we will obtain  a family of parallel $\G$-structures that are not left-invariant but induce the same left-invariant metric like $\omega_0$. Consider $M:=M(a,b,c)\in\fso(4,3)$ given by $e_1,e_2,e_3\in \ker M$ and 
\begin{eqnarray*}
    &\textstyle M(e_4)=a \exp(\frac12 t) e_1+b\exp(-t) e_2+c\exp(\frac32 t) e_3,&\\ &\textstyle M(e_5)=a \exp(\frac12 t) e_4,\quad M(e_6)=b\exp(-t) e_4,\quad M(e_7)=c\exp(\frac32 t) e_4&
\end{eqnarray*}
and put $A:=A(a,b,c):=\exp M(a,b,c)$. Now we obtain a 3-parameter family of three-forms
\begin{equation}\label{eq:3f}
\omega(a,b,c)=A(a,b,c)^*\omega_0.
\end{equation}
Since $A$ is orthogonal, all these 3-forms induce the same metric as $\omega_0$. Moreover, they are parallel. This can be seen as follows. Instead of showing that $A^*\omega_0$ is parallel we prove that $(A^{-1})^*\nabla(A^*\omega_0)$ vanishes. Since $\omega_0$ is parallel, this is equivalent to 
\begin{equation}\label{eq:Ao}
\omega_0(D_le_i,e_j,e_k)+\omega_0(e_i,D_le_j,e_k)+\omega_0(e_i,e_j,D_le_k)=0
\end{equation}
for all $i,j,k,l\in\{1,\dots,7\}$, where $D_l=A\circ \nabla_{e_l}A^{-1}$. 

Equations \eqref{eq:l1} -- \eqref{eq:l5} give $D_l=0$ for $l\not=6$, $D_6e_i=0$ for $i=1,2,3,4,$ and
\begin{eqnarray*}
    D_6(e_5)&=&\textstyle  2b\exp(-t) e_2+2c\exp(\frac32 t)e_3,\\ \quad D_6(e_6)&=&-2b \exp(-t) e_1,\\ D_6(e_7)&=&\textstyle -2c \exp(\frac32 t) e_1, 
\end{eqnarray*}
which immediately implies \eqref{eq:Ao}.

Although we have neither explained nor worked with the relation between spinors and $\G$-structures in this paper let us now roughly explain the idea behind the above construction using it. For details on this relation see \cite{Kg2}. Every (not necessarily parallel) $\G$-structure on a Lie group $G$ that induces the same metric on $G$ as the $\G$-structure $\omega$ is given by a 3-form $A^*\omega$, where $A$ is function from $G$ to $\grO(\fg,\ip)\cong\grO(4,3)$. If $A$ maps to $\SO(4,3)$, then $A^*\omega$ induces the same orientation as $\omega$, otherwise we get the opposite orientation.  Let us first consider the situation at the identity $e\in G$. The $\SO(4,3)$-orbit of $\omega$ in $\bigwedge^3(\fg^*)$ corresponds bijectively to the set of non-isotropic elements in the projective real spinor representation $(\Delta_{4,3}\setminus \{0\})/\RR^*=(\RR^{4,4}\setminus \{0\})/\RR^*$ of ${\rm Spin}(4,3)$. Thus it can be identified with $(S^{4,3}/\ZZ_2)\cup (H^{3,4}/\ZZ_2)$, where $S^{4,3}$ and $H^{3,4}$ are the sets of space- and time-like unit vectors in $\RR^{4,4}$, respectively. Furthermore, it is well known that given a non-isotropic spinor $\psi_0\in\Delta_{4,3}$ the Clifford multiplication $X\mapsto X\cdot\psi_0$ defines an isometry from $\fg$ to $\psi_0^\perp\subset\Delta_{4,3}$. 

Now we again turn to 3-forms and spinor fields on $G$. Then of course a $\G$-structure $A^*\omega$ is parallel if and only if the associated unit spinor field (unique up to sign) is parallel. Consider $\omega$ as a parallel 3-form on $G$ and let $\psi_0$ be the unit spinor field that corresponds to $\omega$. Assume that there exists a parallel isotropic vector field $X$ on $G$. Then $\psi':=\psi_0+X\cdot \psi_0$ is a parallel unit spinor field. Hence it corresponds to a parallel $\G$-structure $\omega'=A^*\omega$ for some function $A:\fg\to\SO(4,3)$. In the present example we applied this construction to the 3-dimensional space of isotropic parallel vector fields, which gave us a 3-parameter family of parallel $\G$-structures. 

Conversely, let $\omega'=A^*\omega$ be parallel and let $\psi'$ be the corresponding parallel unit spinor field (unique up to sign). We decompose $\psi'=f\cdot\psi_0 +\psi^\perp$, where $f$ is a function on $G$ and $\psi^\perp\perp\psi_0$. Since $\psi'$ and $\psi_0$ are parallel, $f$ is constant and $\psi^\perp$ is also parallel. Thus we can define a parallel vector field $X$ by $\psi^\perp=X\cdot\psi_0$. If the holonomy $\fh$ is indecomposable, then $X$ is isotropic. In particular, the three-parameter family in~\eqref{eq:3f} contains all parallel $\G$-structures that induce the same metric and the same orientation on  $G_\eps$ as $\omega_0$. The parallel $\G$-structures that induce the same metric and but the reverse orientation on $G$ are then exactly the 3-forms $-\omega(a,b,c)$ for $a,b,c\in\RR$. 
  
\begin{proof}[Proof of Theorem~\ref{th:main}]
Let $\fh$ be a subalgebra of $\g$ whose natural representation on $\RR^{4,3}$ is indecomposable and of type \III. By Lemma~\ref{lm:abelian}, $\fh$ is abelian if and only if it is conjugate to the two-dimensional Lie algebra $\fm(1,0,1)$ or the three-dimensional Lie algebra $\fm(1,0,2)$. In particular, this proves the second assertion of the Theorem. Furthermore, the holonomy algebras of the examples described above are $\fm(1,0,1)$ and $\fm(1,0,2)$. This shows one direction of the first assertion.

Assume now that $\fh$ is an indecomposable type III infinitesimal holonomy  algebra of a left-invariant $\G$-structure. Then, on the one hand, $\fh$ is on the list in Theorem \ref{TFK}. On the other hand, $\fa=0$ holds by Proposition~\ref{pro:anzimpliesred}. Thus  $\fh=\fm(1,0,1)$ or $\fh=\fm(1,0,2)$.
\end{proof}

\appendix
\section{Appendix}\label{A1}
\subsection{Algebraic curvature tensors with values in ${\fhIII}$}\label{Acurv}
\begin{small}
\begin{table}[H]
\begin{center}
\renewcommand{\arraystretch}{1.4}
\begin{tabular}{|c|c|c|c|}
\hline
$R(e_i,e_j)$&$A$&$v$&$y$\\
\hline
$R_{15}$&$0$&$0$&$(b_1+b_4,b_3+c_4)$\\[1ex]
$R_{26}$&$\left(\begin{array}{cc} -a_1&-a_2 \\-a_3&a_1 \end{array}\right)$&$0$&$(b_1,b_3)$\\[3.5ex]
$R_{27}$&$\left(\begin{array}{cc} -a_3& a_1\\j_1&a_3 \end{array}\right)$&$0$&$(b_3,c_3)$\\[3.5ex]
$R_{36}$&$\left(\begin{array}{cc} -a_2& j_2\\a_1&a_2 \end{array}\right)$&$0$&$(b_2,b_4)$\\[3.5ex]
$-R_{67}=\frac1{\sqrt2}R_{45}$&$0$&$0$&$(w_1,w_2)$\\[1ex]
$R_{56}$&$\left(\begin{array}{cc} b_1&b_2 \\b_3& b_4\end{array}\right)$&$w_1$&$(j_3,t)$\\[3.5ex]
$R_{57}$&$\left(\begin{array}{cc} b_3&b_4\\c_3&c_4 \end{array}\right)$&$w_2$&$(t,j_4)$\\[3.5ex]
\hline
\multicolumn{4}{|c|}{$R_{12}=R_{13}=R_{14}=R_{16}=R_{17}=R_{23}=R_{24}=0$} \\[0.5ex]
\multicolumn{4}{|c|}{$R_{25}=R_{34}=R_{35}=R_{46}=R_{47}=0$}\\[0.5ex]
\multicolumn{4}{|c|}{$R_{37}=R_{15}-R_{26}$}\\[0.5ex]
\hline
\end{tabular}
\caption{Curvature}
\label{tab:t1}
\end{center}
\end{table}
\end{small}

\subsection{Commutators for $\Lambda_i\in\fhIII$, $i=1, \ldots, 7$.}\label{A1.h3}

\begin{eqnarray}
\,[e_1, e_2] &=&  a_{13}e_3+a_{11}e_2-\tr (A_2) e_1 \label{c12}\\
\,[e_1, e_3] &=& a_{14}e_3+a_{12}e_2-\tr (A_3) e_1 \label{c13}\\
\,[e_1, e_4] &=&  (\sqrt{2} v_1-\tr (A_4))e_1 \label{c14}\\
\,[e_1, e_5] &=&  -\tr (A_1) e_5+\sqrt{2}v_1e_4+y_{12}e_3+y_{11}e_2-\tr (A_5) e_1, \label{c15}\\
\,[e_1, e_6] &=&  -a_{12}e_7-a_{11}e_6-v_1 e_3+(-y_{11}-\tr (A_6))e_1, \label{c16}\\
\,[e_1, e_7] &=& -a_{14} e_7-a_{13} e_6+v_1 e_2+(-y_{12}-\tr (A_7)) e_1, \label{c17} \\
\,[e_2, e_3] &=& (a_{24}-a_{33}) e_3+(a_{22}-a_{31} )e_2,\label{c23}\\
\,[e_2, e_4] &=&   -a_{43} e_3-a_{41} e_2+\sqrt{2} v_2 e_1, \label{c24}
\end{eqnarray}
\begin{eqnarray}
\,[e_2, e_5] &=& -\tr (A_2)e_5+\sqrt{2}v_2e_4+(y_{22}-a_{53})e_3+(y_{21}-a_{51})e_2,\label{c25}\\
\,[e_2, e_6] &=&  -a_{22}e_7-a_{21}e_6+(-v_2-a_{63})e_3-y_{21}e_1-a_{61}e_2, \label{c26}\\
\,[e_2, e_7] &=& -a_{24} e_7-a_{23} e_6-a_{73} e_3+(v_2-a_{71}) e_2-y_{22} e_1, \label{c27}\\
\,[e_3, e_4] &=&  -a_{44} e_3-a_{42} e_2+\sqrt{2} v_3 e_1, \label{c34}\\
\,[e_3, e_5] &=&  -\tr (A_3)e_5+\sqrt{2}v_3e_4+(y_{32}-a_{54})e_3+(y_{31}-a_{52})e_2,\label{c35}\\
\,[e_3, e_6] &=&  -a_{32} e_7-a_{31} e_6+(-v_3-a_{64}) e_3-y_{31} e_1-a_{62} e_2, \label{c36}\\
\,[e_3, e_7] &=&  -a_{34} e_7-a_{33} e_6-a_{74} e_3+(v_3-a_{72}) e_2-y_{32} e_1, \label{c37}\\
\,[e_4, e_5] &=&  -\tr (A_4) e_5+\sqrt{2} v_4 e_4+y_{42} e_3+y_{41} e_2-\sqrt{2} v_5 e_1, \label{c45}\\
\,[e_4, e_6] &=& -a_{42} e_7-a_{41} e_6-v_4 e_3+(-y_{41}-\sqrt{2} v_6) e_1,\\
\,[e_4,e_7]&=&  -a_{44} e_7-a_{43}e_6+v_4 e_2+(-y_{42}-\sqrt{2} v_7) e_1,\label{c47}\\
\,[e_5, e_6] &=&  -a_{52}e_7-a_{51}e_6+\tr(A_6)e_5-\sqrt{2} v_6 e_4-(v_5+y_{62}) e_3-y_{51} e_1-y_{61}e_2,\qquad\ \ \label{c56}\\
\,[e_5, e_7] &=&  -a_{54} e_7-a_{53} e_6+\tr(A_7) e_5-\sqrt{2} v_7 e_4-y_{72} e_3+(v_5-y_{71}) e_2-y_{52} e_1,\label{c57}\\
\,[e_6, e_7] &=&  (-a_{64}+a_{72})e_7+(-a_{63}+a_{71})e_6+v_7e_3+v_6e_2+(-y_{62}+y_{71})e_1 \label{c67}
\end{eqnarray}

\subsection{Commutators for $\fa=\RR\cdot\diag(1,0)$}\label{A1.rd}

\begin{eqnarray*}
\,[e_1, e_2] &=& a_{11}e_2-(u_{12}+\tr (A_2))e_1, \\
\,[e_1, e_3] &=& a_{14}e_3-\tr (A_3)e_1, \\
\,[e_1, e_4] &=& \sqrt2 u_{12}e_3+(\sqrt{2}v_1-\tr (A_4))e_1,\\
\,[e_1, e_5] &=& u_{12}e_6-\tr (A_1)e_5+\sqrt{2}v_1e_4+y_{12}e_3+y_{11}e_2-\tr (A_5)e_1,\\
\,[e_1, e_6] &=& -a_{11}e_6-v_1e_3+(-y_{11}-\tr (A_6))e_1,\\
\,[e_1, e_7] &=& -a_{14}e_7+\sqrt{2}u_{12}e_4+v_1e_2+(-y_{12}-\tr (A_7))e_1,\\
\,[e_2, e_3] &=& a_{24}e_3-a_{31}e_2+u_{32}e_1, \\
\,[e_2, e_4] &=& \sqrt{2}u_{22}e_3-a_{41}e_2+(\sqrt{2}v_2+u_{42})e_1,\\
\,[e_2, e_5] &=&u_{22}e_6-\tr (A_2)e_5+\sqrt{2}v_2e_4+y_{22}e_3+(y_{21}-a_{51})e_2+u_{52}e_1,\\
\,[e_2, e_6]& =& -a_{21}e_6-v_2e_3-a_{61}e_2+(-y_{21}+u_{62})e_1, \\
\,[e_2, e_7] &=& -a_{24}e_7+\sqrt{2}u_{22}e_4+(v_2-a_{71})e_2
+(-y_{22}+u_{72})e_1,\\
\,[e_3, e_4] &=& (\sqrt{2}u_{32}-a_{44})e_3+\sqrt{2}v_3e_1, \\
\,[e_3, e_5] &=& u_{32}e_6-\tr (A_3)e_5+\sqrt{2}v_3e_4+(y_{32}-a_{54})e_3+y_{31}e_2, \\
\,[e_3, e_6] &=& -a_{31}e_6-(v_3+a_{64})e_3-y_{31}e_1, \\
\,[e_3, e_7] &=& -a_{34}e_7+\sqrt{2}u_{32}e_4-a_{74}e_3+v_3e_2-y_{32}e_1,\\
\,[e_4, e_5]& =& u_{42}e_6-\tr (A_4)e_5+\sqrt{2}v_4e_4+(y_{42}-\sqrt{2}u_{52})e_3+y_{41}e_2-\sqrt{2}v_5e_1, \\
\,[e_4, e_6] &=& -a_{41}e_6-(v_4+\sqrt{2}u_{62})e_3-(y_{41}+\sqrt{2}v_6)e_1,\\
\,[e_4, e_7] &=& -a_{44}e_7+\sqrt{2}u_{42}e_4-\sqrt{2}u_{72}e_3+v_4e_2-(y_{42}+\sqrt{2}v_7)e_1,\\
\,[e_5, e_6] &=& -(a_{51}+u_{62})e_6+\tr (A_6)e_5-\sqrt{2}v_6e_4-(v_5+y_{62})e_3-y_{51}e_1-y_{61}e_2,\label{eq:56TI}\\
\,[e_5, e_7] &=& -a_{54}e_7-u_{72}e_6+\tr (A_7)e_5+\sqrt{2}(u_{52}-v_7)e_4-y_{72}e_3+
(v_5-y_{71})e_2-y_{52}e_1,\\
\,[e_6, e_7] &=& -a_{64}e_7+a_{71}e_6+\sqrt{2}u_{62}e_4+v_7e_3+v_6e_2+(-y_{62}+y_{71})e_1
\end{eqnarray*}

\begin{description}

\item[Viviana del Barco,] Instituto de Matem\'atica, Estat\'istica e Computa\c{c}\~ao Cient\'ifica, Universidade Estadual de Campinas, UNICAMP, Rua Sergio Buarque de Holanda, 651, Cidade Universitaria Zeferino Vaz, 13083-859, Campinas, S\~ao Paulo, Brazil.\\
{\it Email address:} {\tt delbarc@ime.unicamp.br }

 \item[Ana Cristina Ferreira,] Centro de Matemática, Universidade do Minho, Campus de Gualtar, 4710-057 Braga, Portugal. \\
 {\it Email address:} {\tt anaferreira@math.uminho.pt} 

\item[Ines Kath,] Institut f\"ur Mathematik und Informatik, Universit\"at Greifswald, Walther-Rathenau-Str. 47, D-17487 Greifswald, Germany \\
{\it Email address:} {\tt ines.kath@uni-greifswald.de}

\end{description}

\end{document}